\documentclass{article}

\usepackage[top=1.5in,left=1.25in,right=1.25in,bottom=1.5in]{geometry}

\usepackage{latexsym}
\usepackage{epsfig, ecltree, epic, eepic}
\usepackage{enumerate,amsmath,amssymb,dsfont,pifont,amsthm}
\usepackage[dvips]{color}
\usepackage{graphicx}
\usepackage[arrow, matrix, curve]{xy}

\numberwithin{equation}{section}

\usepackage{amsthm,amsmath}
\usepackage{enumerate,amssymb,dsfont,pifont}

\theoremstyle{plain}
\newtheorem{theorem}{Theorem}[section]
\newtheorem{corollary}[theorem]{Corollary}
\newtheorem{lemma}[theorem]{Lemma}
\newtheorem{proposition}[theorem]{Proposition}

\theoremstyle{definition}
\newtheorem{definition}[theorem]{Definition}

\theoremstyle{remark}
\newtheorem{remark}[theorem]{Remark}

\newtheorem{example}[theorem]{Example}

\newcommand{\bbc}{\mathbb{C}}
\newcommand{\bbr}{\mathbb{R}}

\newcommand{\bbp}{\mathbb{P}}
\newcommand{\bbe}{\mathbb{E}}

\newcommand{\cb}{\mathcal{B}}

\newcommand{\norm}[1]{\left\| #1 \right\|}


\renewcommand{\leq}{\leqslant}
\renewcommand{\geq}{\geqslant}

\newcommand{\NN}{\mathbb{N}}

\newcommand{\RR}{\mathbb{R}}

\newcommand{\SD}{\mathbb{S}^{d-1}}

\newcommand{\EE}{\mathbb{E}}
\newcommand{\PP}{\mathbb{P}}

\newcommand{\calA}{\mathcal{A}}
\newcommand{\calB}{\mathcal{B}}

\newcommand{\calF}{\mathcal{F}}

\theoremstyle{plain}

\theoremstyle{definition}

\newcommand{\change}[1]{\textcolor{black}{#1}}

\begin{document}

\allowdisplaybreaks

\title{\bfseries OPERATOR-STABLE-LIKE PROCESSES}

\author{\textsc{Peter Scheffler, Alexander Schnurr,  Daniel Schulte} \thanks{{Department of Mathematics, University of Siegen, 57068 Siegen, Germany}
{corresponding author: schnurr@mathematik.uni-siegen.de}}
    }

\date{\today}

\maketitle

\begin{abstract}
In the present paper, we introduce so-called operator-stable-like processes. Roughly speaking, they behave locally like operator-stable processes, but they need not to be homogenous in space. 
Having shown existence for this class of processes,
we analyze maximal estimates, the existence of moments, the short- and long-time behavior of the sample paths and $p$-variation.
The class introduced here includes stable-like processes as special case. 
\end{abstract}

\emph{MSC 2010:} 
60J75, 
60H10,
60J35, 
60J25, 
47G30 

\noindent \emph{Keywords:} Operator-stable process, symbol, SDE, path properties, It\^o process, L\'evy-type process.

\section{Introduction}

Let $X=(X_t)_{t\geq 0}$ be a \emph{L\'evy process}, that is, a process with c\`adl\`ag paths in $\bbr^d$ having stationary and independent increments (cf. \cite{Sato}). It is a well-known fact that the characteristic functions of the L\'evy process can be written as
\[
\bbe^x\left(e^{i\xi'(X_t-x)}\right) = \bbe^0 \left(e^{i\xi'X_t}\right) = e^{-t \psi(\xi)}
\]
where $x$ denotes the starting point of the space-homogeneous process.  The \emph{L\'evy exponent} $\psi : \RR^d  \rightarrow \mathbb{C}$ has the following representation:
\begin{align*}
\begin{split}
\psi(\xi)= & -i\langle l , \xi \rangle +\frac{1}{2} \langle \xi ,  Q \xi  \rangle + \int_{\Gamma} \left( 1-e^{i \langle y , \xi \rangle }+1_{\lbrace \| y \| < 1 \rbrace } i  \langle \xi , y \rangle \right) \, \phi(dy).
\end{split}
\end{align*}
Here, $l$ is a vector in $\bbr^d$, $Q$ is a positive semindefinite $d\times d$-matrix and $\phi$ is the so called \emph{L\'evy measure}; 
$\Gamma$ denotes $\RR^d \setminus \lbrace 0 \rbrace$. 
The tupel $(l,Q, \phi(\cdot))$ is called \emph{L\'evy triplet} of the process $X$. 

Among the various subclasses of L\'evy processes, so called \emph{operator-stable processes} are a prominent one (cf. \cite{Sharpe}, \cite{Peter}  and the references given therein).
Operator-stable L\'evy processes admit the triplet $(l,Q,\phi)$ where
\[
  \phi(A)=\int_{\SD} \int_0^\infty 1_A(r^E\theta) \frac{dr}{r^2} \ \sigma(d\theta), \hspace{10mm} A\in \calB(\RR^d\setminus\{0\}).
\]
Here, $E$ is a symmetric $d\times d$-matrix and $\sigma$ is a finite measure on the unit sphere $\SD$ (cf. \cite{Peter} Theorem 7.2.5.).
 
Operator-stable processes have been used in order to model multivariate financial and hydrological data (cf. \cite{Nolan} Section 9 and \cite{Water}). In both cases, the datasets appear to be inhomogeneous in space. Hence, model classes which allow for this kind of inhomogeneity are desirable. Our aim is thus to construct a class of processes which behaves locally like operator-stable L\'evy processes, but which is \emph{inhomogeneous} in space and belongs to a class of stochastic processes, which is still analytically tractable. 

To this end, we modify the L\'evy measure of an operator stable law without normal component in such a way that the exponent, that is, the $d \times d-$ matrix $E$, is no longer constant but depends on the position $x \in \RR^d$ in space. If the exponent $E(x)$ satisfies certain conditions, we call the resulting family of L\'evy measures $\phi(x, \cdot)$ operator-stable-like L\'evy measures.
Unlike in the L\'evy case, it is by no means clear, that a corresponding process exists. 
According to the L\'evy measure, we construct an associated stochastic differential equation (SDE) and prove that the solution of this SDE is an It\^o process in the sense of \cite{vierleute}. Hence it is both: a Markov process and a semimartingale with `nice' characteristics.  We show that for symmetric operator-stable-like L\'evy measures the symbol of the constructed process can be represented with these L\'evy measures. 

In the second part we investigate properties of the symbol of the new class of processes. We show a scaling property which is helpful to get lower and upper bounds for the symbol. Then we use these estimates to study the properties of the processes. We will focus on maximal estimates, the existence of moments, the short- and long-time behavior of the sample paths and the $p$-variation.

\vspace{2mm}
The notation we are using is more or less standard: 
We write $\mathbb{N}:=\lbrace 1,2,... \rbrace$  and $\mathbb{N}_0:= \mathbb{N} \cup \lbrace 0 \rbrace$. 
We use  $ \Gamma:=\RR^d \setminus \lbrace 0 \rbrace$.\\ 
For $f,g : \RR^d \rightarrow \RR$ we write $f \sim g$, if there are $C_1,C_2>0$ such that  $ C_1 f(x) \leq g(x) \leq C_2 f(x)$. 
Vectors in $\mathbb{R}^d$ are column vectors; for the transposed vector we write $^{t}$ and the components are denoted by $x^{(j)}$ $(j=1,...,d)$.  The Euclidean scalar product of $x,y$ is denoted by $\langle x, y \rangle$ and $\| \cdot \| := \| \cdot \| _{2}$ denotes the Euclidean norm. $| \cdot |$ denotes absolute value respectively the 1-norm on $\RR^d$ and $L(\RR^d)$ where the latter denotes the set of endomorphisms on $\RR^d$, written as matrices. The operator norm on $L(\RR^d)$ is written as
\begin{equation}
\|A \|:= \sup\limits_{\|x\| =1} \|Ax \| = \sup\limits_{\|x\| \leq 1} \|Ax \| .
\end{equation} 
The matrix exponential 
\begin{equation}
r^A:= \exp( A \ln r)=\sum\limits_{k=0}^{\infty} \frac{A^k}{k!}(\ln r)^k \quad \in L(\RR^d),
\end{equation}
plays a vital role in our considerations (for some details and estimates in this context cf. Appendix A.). For a state-space dependent matrix  $E(x) \in L(\RR^d)$ we define 
$$ \lambda(x): = \min \lbrace \mathrm{Re } \ \sigma(x) : \sigma(x) \text{ is eigenvalue of } E(x) \rbrace$$
and
$$ \Lambda(x): =\max \lbrace \mathrm{Re } \ \sigma(x) : \sigma(x) \text{ is eigenvalue of } E(x)  \rbrace.$$
The open Ball is denoted by $B_R(x):= \lbrace y \in \mathbb{R}^d : \| y-x\| < R \rbrace$.
We write $M^b(\mathbb{R}^d)$ for the set of bounded Borel measures on $\mathbb{R}^d$ and $M^1(\mathbb{R}^d)$ for probability measures.  We always consider a stochastic basis $(\Omega, \calF, (\calF_t)_{t \geq 0}, \PP)$ in the background which satisfies the usual hypotheses. 

$B(\mathbb{R}^d):=B(\mathbb{R}^d,\mathbb{R})$ are the Borel measurable functions $f : \mathbb{R}^d \rightarrow \mathbb{R}$ and $B_b(\mathbb{R}^d)$ the bounded Borel measurable functions. Continuous functions are denoted by $C(\mathbb{R}^d)$. Hence, $C_b(\mathbb{R}^d)$, $ C_{\infty}(\mathbb{R}^d)$ resp. $C_c(\mathbb{R}^d)$ are the bounded continuous funcitons, those vanishing at infinity resp. those with compact support. The upper index $C^i(\mathbb{R}^d)$ resp. $C^{\infty}(\mathbb{R}^d)$ denotes continuous differentiability. For the Schwartz space we write $S(\mathbb{R}^d)$. 
For $u \in S(\mathbb{R}^d)$
\begin{equation}
\hat{u}(\xi) := \frac{1}{(2 \pi)^d} \int_{\RR^d} e^{-i \langle x, \xi \rangle} u(x) dx
\end{equation} 
is the \emph{Fourier transform} and for $v \in S(\RR^d)$ the inverse transform is given by
\begin{equation}
 \check{v}(x)= \int_{\RR^d} e^{i \langle x, \xi \rangle} v(\xi) d \xi.
\end{equation}

\vspace{2mm}
The paper is organized as follows: 
In Section 2 we construct the new class of processes. Section 3 is devoted to properties of this class.
Some useful results on matrix exponentials and generalized polar coordinates are postponed to Appendix A.

\section{Construction of Operator-stable-like Processes}

The class we would like to consider should behave locally like operator-stable L\'evy processes, but the transition functions should be state-space dependent. In order to construct such a class, we have to leave the L\'evy framework behind. It turns out, that the class of It\^o processes in the sense of Cinlar et al. is sufficiently rich for our purpose and it is still analytically tractable \cite{vierleute}. 

For the readers convenience, we shortly recall that a process
\begin{equation}
  \left( \Omega, \calA, \PP^x,(X_t)_{t \geq 0}\right) _{x \in \RR^d} ,
\label{Formel:FamStPro}
\end{equation}
on $\bbr^d$ which might start in each point of the state space, 
is called  \emph{universal Markov process}, if 
\begin{enumerate} [leftmargin{0mm}]
\item[(MP1)] for each $ A \in \calA$ the mapping $x \mapsto \PP^x(A)$  is measurable;
\item[(MP2)] for each $x\in \RR^d$ we have $\PP^x(X_0=x)=1$;
\item[(MP3)] for all $s,t \geq 0, x \in \RR^d$ and $B \in \calB^d$ it holds that
$$\PP^x(X_{s+t}\in B | \calF_s^X)= \PP^{X_s}(X_t \in B)  \quad  \PP^x \text{-f.s. } $$
\end{enumerate}
Compare in this context the classical monograph \cite{blumenthalget}. 

\begin{definition}
An \emph{It\^o process}  is a stochastic process $(X,\bbp^x)_{x\in\bbr^d}$ which is universal Markov and a semimartingale for every $x\in\bbr^d$ with characteristics of the form 
\begin{align} \begin{split}
  B_t^{(j)}(\omega) &=\int_0^t  \ell^{(j)}(X_s(\omega)) \ ds,  \hspace{10mm} j=1,...,d\\
  C_t^{jk}(\omega)  &=\int_0^t Q^{jk}(X_s(\omega)) \ ds,       \hspace{10mm} j,k=1,...,d\\
  \nu(\omega;ds,dy) &=N(X_s(\omega),dy) \ ds 
\end{split} \end{align}
for every $x\in\bbr^d$ with respect to the fixed truncation function $y1_{\lbrace \| y \| <1 \rbrace}$. Here $\ell(x)=(\ell^{(1)}(x),...,\ell^{(d)}(x))'$ is a vector in $\bbr^d$, $Q(x)$ is a positive semi-definite matrix and $N$ is a Borel transition kernel such that $N(x,\{0\})=0$. We call $\ell$, $Q$ and $N$
the \emph{differential characteristics} of the process.
\end{definition}

Sometimes this class is called \emph{L\'evy-type processes}, or, if the Markov property is not demanded, \emph{homogeneous diffusions with jumps}. The differential characteristics of a L\'evy process are $(\ell, Q, \phi)$. Like the process, the differential characteristics are homogeneous in space in this simple case. In case of an operator-stable L\'evy process, they are $(0,0,\phi)$ where $\phi$ is operator stable. 

In \cite{cinlarjacod} the authors have proved the close relationship between It\^o processes and SDEs of Skorokhod type.  We will construct our class of processes via stochastic differential equations of this kind. For other methods of construction cf. Chapter 3  in \cite{LevyLike}.

There is a close relationship between operator-stable-like Le\'vy measures and L\'evy measures of operator-stable distributions (cf. Chapter 7 in \cite{Peter}). Roughly speaking, operator-stable-like Le\'vy measures are L\'evy measures of operator-stable distributions where the exponent $E \in GL(\RR^d)$ may vary in space. In order to give a rigorous definition we introduce the following linear operators.

\begin{definition}
We call \emph{admissible exponents} the class of linear operators $ E(x) \in GL(\RR^d)$, $ x \in \RR^d$, with the following properties: 
\begin{enumerate}
\item[(E1)] $E(x)$ is symmetric.
\item[(E2)] $ E(x)$ is Lipschitz, that is, there exists a constant $C>0$, s.t for every $x,y \in \RR^d$:
\begin{equation}
\| E(y)-E(x) \| \leq C \| y-x \|.
\end{equation}
\item[(E3)] There is a constant $a > 1/2$, s.t we obtain for the real part of the eigenvalues of $E(x)$:
\begin{equation}
\frac{1}{2}< a \leq \lambda(x)  \quad \text{ for all } x \in \RR^d.
\end{equation}
If the following property is satisfies in addition, we call $E(x)$ \emph{bounded admissible exponent}:
\item[(E4)] There is a constant $b \geq a$, s.t we obtain for the real part of the eigenvalues of $E(x)$:
\begin{equation}
 \Lambda(x) \leq b < \infty  \quad \text{ for all } x \in \RR^d.
\end{equation}
\end{enumerate}
\label{Def:Exponenten}
\end{definition}

If not mentioned otherwise, we will always consider admissible exponents in the remainder of the paper.

\begin{remark}
Property (E1) is equivalent to $E(x)$ being orthogonally diagonalizable. It is 
used e.g. in Proposition \ref{Prop:AbschMEExponent} to derive bounds for the matrix exponential.  Moreover, in this case one can always use the unit sphere $\SD$ in the desintegration formula \eqref{desint}. For the general case see Theorem 6.1.7 in \cite{Peter}.
The second property is needed in order to use the SDE techniques below. 
Without Property (E3) the process cannot exist. Even in trivial cases the integral under consideration would be infinite.
In the PhD-Thesis \cite{Daniel} it is shown that under assumption (E4) the resulting process is Feller. This is advantageous in considering some of the properties. Whenever possible, we will not make use of this assumption. 
\end{remark}

\begin{definition}
Let $E(x) \in GL(\RR^d), x \in \RR^d$, be an admissible exponent. We call  $\phi$ \emph{operator-stable-like L\'evy measure with exponent \bf{$E(x)$}}, if it admits the following representation with $A \in \calB(\Gamma)$ and $x \in \RR^d$: 
\begin{equation}\label{desint}
\phi_x(A):=\phi(x,A):= \int_{\SD} \int_0^{\infty} 1_A(r^{E(x)} \theta) \frac{dr}{r^2} \sigma(d\theta)
\end{equation}
where $ \sigma$ is a finite measure on $\SD$ ist.  We use  OSL L\'evy measure as a shorthand.
 \label{Defi:OperatorStableLikeLevyMass}
\end{definition}

\begin{remark}
\begin{itemize}
\item[(i)] The OSL L\'evy measure $\phi_x$ satisfies the following scaling property
\begin{equation}
  \left(t^{E(x)} \phi_x \right) (A) = t \phi_x(A)
\end{equation}  
for $ t >0$ and $ A \in \calB(\Gamma)$.
\label{Bem:SkalierungseigOSLLevyMass}
\item[(ii)] The measure $\phi_x$ is symmetric, that is, $\phi_x(A)= \phi_x(-A)$ for all $A \in \calB(\Gamma)$ and $x \in \RR^d$, if and only if $\sigma$ is symmetric.  
\end{itemize}
\end{remark}

In context of integration with respect to those measures, we obtain: 

\begin{lemma}
\label{Lemma:BererchnungIntegralOSLLevyMass}
For all measurable $f: \Gamma \rightarrow \RR_+$ it holds that:
\begin{equation}
\int_{\Gamma} f(y) \phi_x(dy) =\int_{\SD} \int_0^{\infty} f(r^{E(x)} \theta ) \frac{dr}{r^2} \sigma(d \theta).
\end{equation}
\end{lemma}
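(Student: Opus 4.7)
The plan is to invoke the standard measure-theoretic machine (sometimes called the "standard machine" or "algebraic induction"): establish the identity first for indicator functions directly from the definition of $\phi_x$, then extend by linearity to simple functions, and finally by monotone convergence to arbitrary non-negative measurable $f$.

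First I would note that for $f = 1_A$ with $A \in \calB(\Gamma)$, the claimed equality is literally the defining equation \eqref{desint} of the OSL L\'evy measure $\phi_x$:
\begin{equation*}
\int_\Gamma 1_A(y)\,\phi_x(dy) = \phi_x(A) = \int_{\SD}\int_0^\infty 1_A(r^{E(x)}\theta)\,\frac{dr}{r^2}\,\sigma(d\theta).
\end{equation*}
By linearity of both sides (the left-hand side in its integrand, the right-hand side via the linearity of the iterated integral), the identity then extends to every non-negative simple function $f = \sum_{k=1}^n c_k 1_{A_k}$ with $c_k \geq 0$ and $A_k \in \calB(\Gamma)$.

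For a general measurable $f : \Gamma \to \RR_+$, I would choose an increasing sequence $(f_n)_{n \in \NN}$ of non-negative simple functions with $f_n \uparrow f$ pointwise (for instance the usual dyadic approximation). Since $r \mapsto r^{E(x)}\theta$ is continuous (hence Borel measurable) in $(r,\theta)$ by the series representation of the matrix exponential, the compositions $(r,\theta) \mapsto f_n(r^{E(x)}\theta)$ are Borel measurable on $(0,\infty) \times \SD$ and increase pointwise to $(r,\theta) \mapsto f(r^{E(x)}\theta)$. Applying monotone convergence on both sides of the identity for $f_n$ yields
\begin{equation*}
\int_\Gamma f(y)\,\phi_x(dy) = \lim_{n\to\infty}\int_\Gamma f_n(y)\,\phi_x(dy) = \lim_{n\to\infty}\int_{\SD}\!\int_0^\infty f_n(r^{E(x)}\theta)\,\frac{dr}{r^2}\,\sigma(d\theta) = \int_{\SD}\!\int_0^\infty f(r^{E(x)}\theta)\,\frac{dr}{r^2}\,\sigma(d\theta),
\end{equation*}
which is the desired formula.

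There is no real obstacle here: the proof is a routine change-of-variables/image-measure argument, and the only minor point to verify is joint measurability of $(r,\theta) \mapsto f(r^{E(x)}\theta)$ so that Tonelli's theorem can be applied to justify writing the right-hand side as an iterated integral; this follows because $(r,\theta) \mapsto r^{E(x)}\theta$ is continuous.
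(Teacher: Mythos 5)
Your proof is correct and follows exactly the route the paper alludes to: the paper just says "The lemma is proved by a standard monotone class argument," and your indicator-to-simple-to-monotone-convergence chain is precisely that standard argument written out.
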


The lemma is proved by a standard monotone class argument. For the matrix exponential we obtain by the boundedness of the eigenvalues the following estimates which are essential for the remainder of the paper:

%
%

\begin{proposition}
\label{Prop:AbschMEExponent}
If $E(x)$ is an admissible exponent, there exists a constant $C>0$, such that
\begin{equation}
\| r^{E(x)} \| \leq C r^{a} \hspace{10mm} \text{ for } 0 <r<1.
\end{equation}
If $E(x)$ is even a bounded admissible exponent, there exists a constant $C>0$, such that additionally
\begin{equation}
\| r^{E(x)} \| \leq C r^{b}  \hspace{10mm}\text{ for } r \geq 1.
\end{equation} 
\end{proposition}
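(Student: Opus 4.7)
The plan is to exploit the symmetry hypothesis (E1) to diagonalize $E(x)$ orthogonally and thereby reduce the matrix estimate to a scalar computation on the eigenvalues; after that, both bounds follow from the monotonicity of $r \mapsto r^\alpha$ in $\alpha$.

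First, by (E1), for each fixed $x \in \RR^d$ there is an orthogonal matrix $U(x)$ and a real diagonal matrix $D(x) = \operatorname{diag}(\lambda_1(x), \ldots, \lambda_d(x))$, whose entries are the eigenvalues of $E(x)$, such that $E(x) = U(x) D(x) U(x)^T$. Inserting this into the power series defining $r^{E(x)}$ and using $E(x)^k = U(x) D(x)^k U(x)^T$ for every $k \in \bbn_0$ gives
\begin{equation*}
r^{E(x)} \;=\; U(x)\, r^{D(x)} \, U(x)^T, \qquad r^{D(x)} = \operatorname{diag}\!\left(r^{\lambda_1(x)}, \ldots, r^{\lambda_d(x)}\right).
\end{equation*}

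Second, I would apply submultiplicativity of the operator norm together with the fact that $\|U(x)\| = \|U(x)^T\| = 1$ for orthogonal matrices to obtain
\begin{equation*}
\|r^{E(x)}\| \;\leq\; \|r^{D(x)}\| \;=\; \max_{1\le i \le d} r^{\lambda_i(x)},
\end{equation*}
where the last equality uses that the operator norm of a diagonal matrix with positive entries equals its largest diagonal entry.

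Third, I would split cases according to whether $r<1$ or $r\geq 1$. For $0<r<1$, monotonicity of $\alpha \mapsto r^\alpha$ is decreasing, so (E3) gives $r^{\lambda_i(x)} \leq r^a$ for every $i$, yielding the first estimate. For $r \geq 1$, $\alpha \mapsto r^\alpha$ is increasing, so under (E4) we have $r^{\lambda_i(x)} \leq r^b$, giving the second estimate. In fact the argument shows that the constant in the statement may be taken equal to $1$.

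There is no real obstacle here; the essential content is that (E1) guarantees orthogonal diagonalizability, which makes the matrix exponential completely controlled by its eigenvalues and removes the need for the more delicate estimates on $r^{A}$ that would be required for a general (non-symmetric) admissible exponent.
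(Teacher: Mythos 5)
Your proof is correct and takes essentially the same route as the paper's: the paper's Proposition \ref{Prop:AbschMEExponent} is reduced to Theorem \ref{Satz:AllgAbschDiagOperatorEW}, which likewise uses (E1) to write $E(x)=O(x)D(x)O(x)^{-1}$ with $O(x)$ orthogonal and then applies submultiplicativity of the operator norm. The only (cosmetic) difference is that the paper estimates $\|r^{D(x)}\|$ by way of the taxi norm and norm equivalence, picking up a dimension-dependent constant, whereas you observe directly that the Euclidean operator norm of a positive diagonal matrix equals its largest entry, which lets you take $C=1$.
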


\begin{proof}
Since $E(x)$ is symmetric, this follows directly from Theorem \ref{Satz:AllgAbschDiagOperatorEW} which can be found in the appendix.
\end{proof}

By now, we have defined a state-space dependent family of L\'evy measures $(\phi_x(\cdot))_{x \in \RR^d}$. Subsequently, we will show that this family gives rise to a stochastic process. In search for SDEs having our class of processes as solutions, we use the SDE related to stable-like processes as a starting point (cf. Example  \ref{Bsp:StableLikeProzessLevyMass}).  Let us consider the $d$-dimensional stochastic differential equation
\begin{equation}  \label{SDE}
  X_t=X_0 + \int_0^t \int_{\SD} \int_0^1  r ^{E(X_{s-})}\theta \tilde{N}(ds,d \theta, dr) 
  +  \int_0^t \int_{\SD} \int_1^{\infty}  r ^{E(X_{s-})} \theta N(ds,d \theta, dr)
\end{equation}
where $E(x) \in GL(\RR^d), x  \in \RR^d$, is an admissible exponent. 
Let $N$ denote a Poisson random measure on the product space $\RR_+ \times \SD \times (0,\infty)$ which is adapted to the filtration $(\calF_t)_{t\geq 0}$ and having intensity measure $\lambda^1 \otimes \sigma \otimes \pi$. Here $\sigma$ is the finite measure on $\SD$ as in Definition \ref{Defi:OperatorStableLikeLevyMass} of the OSL L\'evy measure and  $\pi(dr)=r^{-2} dr$ on $(0, \infty)$. For the compensated Poisson random measure we write as usual $\tilde{N}$.  
 
 \begin{example}   \label{Bsp:StableLikeProzessLevyMass}
Choose in \eqref{SDE} for $ \sigma$ the uniform distribution $\SD$ and $E(x)= \frac{1}{\alpha(x)}id $ where $id$ is the identity matrix and $\alpha(x) \in C_b^1( \RR^d)$ Lipschitz continuous with 
\[
0 < \inf_{x \in \RR^d} \alpha(x)\leq \alpha(x) \leq \sup_{x \in \RR^d} \alpha(x)<2.
\]
We then obtain the SDE representation of $\alpha$-stable-like processes (cf. Proposition 2.1 in \cite{ChineseSDE}). Stable-like processes were first introduced by R. Bass in \cite{BassStableLike}.
\end{example}

In order to prove the following theorem we need an upper bound for the difference of two matrix exponentials (cf. Lemma \ref{Lem:AbschaetzungDiffExponenten} in the appendix).

\begin{theorem} \label{Satz:SDEeindeutigloesbar}
For each $\calF_0$ measurable $X_0$ there exists a unique strong solution of the stochastic differential equation \eqref{SDE}.
This solution is c\`adl\`ag and adapted. 
\end{theorem}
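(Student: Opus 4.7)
The plan is to split the equation into its small-jump and large-jump parts and to solve each separately, exploiting the fact that the large-jump driving measure has finite total intensity while the small-jump integrand satisfies the $L^2$--Lipschitz hypotheses of the classical strong-existence theorem for SDEs driven by compensated Poisson random measures (e.g.\ Theorem III.9.1 in Ikeda--Watanabe). I would write
\[
  X_t = X_0 + \int_0^t\!\!\int_{\SD}\!\!\int_0^1 r^{E(X_{s-})}\theta\,\tilde N(ds,d\theta,dr) + J_t,
\]
where $J_t := \int_0^t\!\!\int_{\SD}\!\!\int_1^\infty r^{E(X_{s-})}\theta\,N(ds,d\theta,dr)$ is driven by a Poisson random measure whose intensity $\sigma\otimes\pi$ on $\SD\times[1,\infty)$ has finite total mass $\sigma(\SD)\cdot\int_1^\infty r^{-2}\,dr<\infty$. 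Hence $J$ has only finitely many jumps on any bounded interval; I would enumerate these jump times $0<T_1<T_2<\dots$ together with their marks $(R_k,\Theta_k)$.

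On each random interval $[T_k,T_{k+1})$ the equation reduces to an SDE driven solely by the compensated small-jump measure. For the standard theorem to apply, I must verify, with coefficient $F(x,r,\theta):=r^{E(x)}\theta$ on $(0,1)\times\SD$, the $L^2$-growth and $L^2$-Lipschitz bounds
\begin{align*}
\int_{\SD}\!\!\int_0^1 \|F(x,r,\theta)\|^2\,\frac{dr}{r^2}\,\sigma(d\theta)&\leq K\bigl(1+\|x\|^2\bigr),\\
\int_{\SD}\!\!\int_0^1 \|F(x,r,\theta)-F(y,r,\theta)\|^2\,\frac{dr}{r^2}\,\sigma(d\theta)&\leq L\,\|x-y\|^2.
\end{align*}
The growth bound is immediate from Proposition \ref{Prop:AbschMEExponent}: the estimate $\|r^{E(x)}\|\leq Cr^a$ on $(0,1)$ with $a>1/2$ gives $\int_0^1 r^{2a-2}\,dr<\infty$, uniformly in $x$. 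The Lipschitz bound is obtained by combining Lemma \ref{Lem:AbschaetzungDiffExponenten} with property (E2); I expect the lemma to provide a pointwise estimate of the form $\|r^{E(x)}-r^{E(y)}\|\leq C r^{a}\abs{\log r}^m\,\|E(x)-E(y)\|$ for $r\in(0,1)$, which after using $\|E(x)-E(y)\|\leq C\|x-y\|$ leaves the integrand $r^{2a-2}\abs{\log r}^{2m}$, still integrable on $(0,1)$ since $2a>1$.

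Granted these estimates, the classical theorem yields on $[T_k,T_{k+1})$ a unique strong c\`adl\`ag adapted solution starting from any $\calF_{T_k}$-measurable initial datum. I would then interlace: construct the solution on $[0,T_1)$ with initial value $X_0$, set $X_{T_1}:=X_{T_1-}+R_1^{E(X_{T_1-})}\Theta_1$, and iterate on each $[T_k,T_{k+1})$. Since $T_k\to\infty$ almost surely, this defines a global solution; uniqueness on each piece together with the pathwise prescription of the jumps propagates to pathwise uniqueness on $[0,\infty)$, and the c\`adl\`ag/adapted properties are preserved throughout. The main obstacle is the Lipschitz inequality: the matrix-exponential estimate of Lemma \ref{Lem:AbschaetzungDiffExponenten} must be sharp enough, both in its $r$-dependence near $0$ and in its dependence on $\|E(x)-E(y)\|$, to survive the singular weight $r^{-2}$ at the origin; the strict inequality $a>1/2$ in (E3) is precisely what makes the resulting integral finite. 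The remainder is standard bookkeeping.
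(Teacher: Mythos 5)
Your proposal is correct and follows essentially the same route as the paper: decompose into small and large jumps, reduce via interlacing to the SDE driven by the compensated small-jump measure, and verify the $L^2$-growth and $L^2$-Lipschitz hypotheses via Proposition~\ref{Prop:AbschMEExponent} and Lemma~\ref{Lem:AbschaetzungDiffExponenten}. The paper cites the interlacing result directly from Applebaum (Theorem~6.2.9), and Lemma~\ref{Lem:AbschaetzungDiffExponenten} delivers the estimate in the form $\|r^{E(x)}-r^{E(y)}\|\leq C\,r^{a-\delta}\|x-y\|$ (with $a-\delta>1/2$), which is precisely the logarithmic loss you anticipated absorbed into a small power of $r$.
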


\begin{proof}
Due to the interlacing arguments in Theorem 6.2.9 in \cite{Apple} it is enough to consider the `reduced' SDE without big jumps
\begin{equation} \label{reducedSDE}
M_t= M_0 + \int_0^t \int_{\SD} \int_0^1 \ r^{E(M_{s-})} \theta \tilde{N}(ds, d \theta, dr).
\end{equation}
In order to show existence and uniqueness of this SDE we have to verify the Lipschitz condition and linear growth. 
Hence, it is enough to show that there exists constants $C_1,C_2>0$ such that for all $x,y \in \RR^d$
\begin{align}
\int_{\SD} \int_0^1 \|  r^{E(x)}  \theta \| ^2 \frac{dr}{r^2}  \sigma(d\theta) &\leq C_1 (1+ \|x\|^2), \\
\int_{\SD} \int_0^1 \|  r^{E(x)} \theta -  r^{E(y)}\theta \| ^2\frac{dr}{r^2}  \sigma(d\theta) &\leq 
C_2 \|x-y\|^2.
\end{align}
Linear growth follows from the estimate in Proposition \ref{Prop:AbschMEExponent} for $r \in (0,1)$  which yields the existence of a constant $C>0$ such that
\[
 \int_{\SD} \int_0^1 \|  r^{E(x)} \theta \| ^2 \frac{dr}{r^2}  \sigma(d\theta) \leq C \sigma(\SD) \int_0^1 r^{2a -2 }dr = C_1 \leq C_1(1+\|x\|^2).
 \]
Since the real parts of the eigenvalues of $E(x)$ are bounded from below by $a >\frac{1}{2}$, the integral is finite. Recall that we tacitly assume that the constant $\delta>0$ in Lemma \ref{Lem:AbschaetzungDiffExponenten} is chosen in a way that $ a- \delta > \frac{1}{2}$ is satisfied. The Lipschitz condition is obtained as follows: 
For $\delta >0$ there exists by Lemma \ref{Lem:AbschaetzungDiffExponenten} a constant $C>0$, such that
\begin{align*}
\int_{\SD} \int_0^1 \| r^{E(x)}\theta -  r^{E(y)}\theta \| ^{2}\frac{dr}{r^2}  \sigma(d\theta) &\leq 
\int_{\SD} \int_0^1 \| r^{E(x)} -  r^{E(y)} \|^2 \cdot \| \theta \|^2  \frac{dr}{r^2} \sigma(d\theta) \\
& \leq C \sigma(\SD) \|x-y\|^2  \int_0^1 r^{2a-2\delta-2} dr  \\
& \leq C_2 \|x-y\|^2,
\end{align*}
since $a- \delta > \frac{1}{2}$. \change{In particular, we have used \eqref{AbschaetzDiffExp} in order to obtain the second inequality}. 
\end{proof}



\begin{remark}
(a) It can be easily shown that the solution of the SDE \eqref{SDE} is a semimartingale. By the boundedness of the real parts of the eigenvectors, 
\begin{equation}
t \mapsto \int_0^t \int_{\SD} \int_0^1 r^{E(M_{s-})} \theta \tilde{N}(ds, d \theta, dr)
\end{equation}
is even an $(\calF_t)$-martingale in $L^2$. \\
(b) In the PhD-thesis \cite{Daniel} it is shown that under the additional hypothesis (E4) the solution is a Feller process and that the test functions $C_c^\infty(\bbr^d)$ are contained in the domain of the generator of the process (cf. Satz 5.18 and Satz 5.24 in the thesis).
Here, in order to be able to deal with a larger class of processes we use semimartingale techniques whenever possible.  
\end{remark}

Let us first show that the process is universal Markov, cf. Theorem 2.47 in \cite{Alex}. 

\begin{theorem}
The solution $X^x$ of the SDE \eqref{SDE} with starting point $x$  is a universal Markov process.
\end{theorem}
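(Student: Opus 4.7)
The plan is to verify the three defining conditions (MP1)--(MP3) for the family $(X^x)_{x\in\RR^d}$, relying on the strong existence and pathwise uniqueness already given in Theorem \ref{Satz:SDEeindeutigloesbar} together with the time-homogeneity of the driving noise. Property (MP2) is immediate from the construction, since the SDE \eqref{SDE} with initial value $X_0 = x$ almost surely satisfies $X_0 = x$. For (MP1), I would invoke the standard fact that the Picard iteration used to produce the strong solution gives, under the Lipschitz conditions (which we already verified in the proof of Theorem \ref{Satz:SDEeindeutigloesbar}), a jointly measurable map $(x,\omega)\mapsto X^x_\cdot(\omega)$ into the Skorokhod space. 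Measurability of $x\mapsto \PP^x(A)$ then follows from Fubini.

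The substantive point is (MP3). I would use the classical flow argument for SDEs driven by a time-homogeneous Poisson random measure, as in Theorem 2.47 of \cite{Alex}. For fixed $s\geq 0$ define the shifted driving noise
\begin{equation*}
  N^{(s)}(B\times \Theta\times R) := N\bigl((s+B)\times \Theta\times R\bigr), \qquad B\in\calB(\RR_+),\ \Theta\in\calB(\SD),\ R\in\calB((0,\infty)).
\end{equation*}
Since $N$ has intensity $\lambda^1\otimes\sigma\otimes\pi$, the measure $N^{(s)}$ is again a Poisson random measure with the same intensity, and it is independent of $\calF_s$. Let $Y^{(s),y}$ denote the unique strong solution of the SDE \eqref{SDE} driven by $N^{(s)}$ and started at $y\in\RR^d$; the measurable dependence on $y$ transfers from the $X^y$-case verbatim. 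Pathwise uniqueness applied to the original equation on the time interval $[s,\infty)$ yields
\begin{equation*}
  X^x_{s+t} = Y^{(s),\,X^x_s}_t \quad \text{$\PP$-a.s.}
\end{equation*}
Since $N^{(s)}$ is independent of $\calF_s^X$ while $X^x_s$ is $\calF_s^X$-measurable, conditioning gives, for each Borel $B$,
\begin{equation*}
  \PP^x\bigl(X_{s+t}\in B\,\big|\,\calF_s^X\bigr) = \PP\bigl(Y^{(s),y}_t\in B\bigr)\big|_{y=X^x_s} = \PP^{X^x_s}(X_t\in B),
\end{equation*}
which is precisely (MP3).

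The main obstacle is the identification $X^x_{s+t} = Y^{(s),X^x_s}_t$, since one must argue that plugging a random initial condition into the measurably-parametrized family of solutions $Y^{(s),y}$ yields an object to which pathwise uniqueness applies. This is handled in the standard way by approximating $X^x_s$ by simple $\calF_s$-measurable random variables, using the measurability of $(y,\omega)\mapsto Y^{(s),y}(\omega)$ and continuity of the solution map in probability, both of which follow from the Lipschitz estimates already available. Beyond this, the argument is routine, and we only sketch it, referring to \cite{Alex} for the detailed verification.
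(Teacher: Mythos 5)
Your proof follows essentially the same strategy as the paper's: use pathwise uniqueness of the strong solution together with the time-homogeneity (stationarity) of the driving Poisson random measure to identify the process restarted at time $s$ with a fresh solution started at $X_s$, then condition out $\calF_s^X$. You are in fact somewhat more careful than the paper—explicitly introducing the shifted noise $N^{(s)}$, invoking joint measurability of $(x,\omega)\mapsto X^x$ for (MP1) rather than just "adaptedness," and flagging the random-initial-condition issue—whereas the paper compresses these steps into the informal phrase "by stationary increments of Lévy processes" and cites Theorem 2.47 of \cite{Alex} for the details; the paper also first passes to the reduced SDE \eqref{reducedSDE} via interlacing, which you skip but which is not essential to the argument.
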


\begin{proof}
We consider the solution $M^x$ of the modified SDE \eqref{reducedSDE}. The extension to the general SDE is straight forward using interlacing. 
We have to show (MP3), that is, for all $u,t \geq 0, x \in \RR^d$ and $B \in \calB^d$ we have: 
\begin{equation}
  \PP^x \left(M_{u+t} \in B | \calF_{\change{u}}^M \right)= \PP^{M_u} \left(M_t \in B \right) \quad \PP^x \text{ -f.s. }
\label{Satz:SDEistUniMPEigenschaftUniMP}
\end{equation} 
Let $x,y \in \RR^d$ be fixed and $u \geq 0$ such that $M_u^y=x$.
Then under $ \PP^y \left( \cdot | M_u=x \right) $ we have
\begin{align*}
M_{u+t}&= y + \int_0^{u+t} \int_{\SD} \int_0^1 r^{E(M_{s-})} \theta \tilde{N}(ds, d \theta, dr) \\
&= y +  \int_0^u \int_{\SD} \int_0^1 r^{E(M_{s-})} \theta \tilde{N}(ds, d \theta, dr) +  \int_u^{u+t} \int_{\SD} \int_0^1 r^{E(M_{s-})} \theta \tilde{N}(ds, d \theta, dr) \\
&=x + \int_u^{u+t} \int_{\SD} \int_0^1 r^{E(M_{s-})} \theta \tilde{N}(ds, d \theta, dr) .
\end{align*}
By stationary increments of L\'evy processes we obtain on the other hand under $\PP^x$
\begin{align*}
M_t &=   x + \int_0^t \int_{\SD} \int_0^1 r^{E(M_{s-})} \theta \tilde{N}(ds, d \theta, dr) \\
&=  x + \int_u^{u+t} \int_{\SD} \int_0^1 r^{E(M_{s-})} \theta \tilde{N}(ds, d \theta, dr) 
\end{align*}
with solution $M_t^x$. Hence the result
\[
  \PP^y \left( M_{u+t} \in B | M_u=x \right) = \PP^x \left( M_t \in B \right).
\]
The measurability (MP1) follows directly from the adaptedness of the solution (cf.  Theorem \ref{Satz:SDEeindeutigloesbar}) and (MP2) is obviously satisfied since the process starts almost surely in $x$.
\end{proof}

By Theorem 3.33 in \cite{cinlarjacod}, we obtain that the solution is an It\^o process. There is a 1:1-correspondence between L\'evy processes and their characteristic exponent. In \cite{Schnurr} it was shown that the probabilistic symbol can be used as a generalization of the characteristic exponent in the more general framework of It\^o processes. 

\begin{definition} \label{def:symbol}
Let $X$ be an It\^o process, which is conservative and normal, that is,  $\bbp^x(X_0=x)=1$. Fix a starting point $x$ and define $\tau=\tau^x_k$ to be the first exit time from a compact neighborhood $K:=K_x$ of $x$: 
  \[
    \tau:=\inf\{t\geq 0 : X_t^x \notin K \}.
  \]
We call $p:\bbr^d\times \bbr^d\to \bbc$ given by
\begin{align} \label{stoppedsymbol} 
     p(x,\xi):=- \lim_{t\downarrow 0}\bbe^x \frac{e^{i(X^\tau_t-x)'\xi}-1}{t}
\end{align}
the \emph{symbol} of the process, if the limit exists and coincides for every choice of $K$.
\end{definition}

\begin{theorem}
The solution process of \eqref{SDE} is an It\^o process. Its symbol is:
\begin{equation}
\label{Formel:SymbolAnfangKap6}
q(x,\xi)= \int_{\SD} \int_0^{\infty} \left( 1- e^{i \langle  r^{E(x)}\theta, \xi \rangle} + 1_{\lbrace 0 < r <1 \rbrace} i \langle  r^{E(x)}\theta, \xi \rangle \right) \frac{dr}{r^2} \sigma(d \theta).
\end{equation}
\end{theorem}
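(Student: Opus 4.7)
The first assertion—that the solution process is an It\^o process—is the content of Theorem 3.33 in \cite{cinlarjacod} applied to the SDE \eqref{SDE}, as already indicated in the paragraph preceding Definition \ref{def:symbol}. The substantive task is therefore to identify the differential characteristics of the solution and to evaluate the symbol via the representation formula for It\^o processes established in \cite{Schnurr}.

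The plan is first to read off the L\'evy kernel and drift from the SDE. Since the Poisson random measure $N$ has intensity $ds\otimes\sigma(d\theta)\otimes r^{-2}dr$ and the jump of $X$ at an atom $(s,\theta,r)$ equals $r^{E(X_{s-})}\theta$, the push-forward of $\sigma(d\theta)\otimes r^{-2}dr$ under $(\theta,r)\mapsto r^{E(x)}\theta$ is precisely $\phi_x$ by Definition \ref{Defi:OperatorStableLikeLevyMass}, so the L\'evy kernel is $N(x,dy)=\phi_x(dy)$; the Gaussian characteristic is zero. The delicate point is the drift $\ell(x)$, because the SDE \eqref{SDE} compensates the jumps according to $\lbrace r<1\rbrace$ whereas the standard It\^o-process representation uses the truncation function $y1_{\lbrace\|y\|<1\rbrace}$. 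Comparing the two representations of $X_t-X_0$ and collecting the compensator of the mismatch yields
\begin{equation*}
\ell(x)=\int_{\SD}\int_0^{\infty} r^{E(x)}\theta\bigl[1_{\lbrace\|r^{E(x)}\theta\|<1\rbrace}-1_{\lbrace r<1\rbrace}\bigr]\frac{dr}{r^{2}}\sigma(d\theta),
\end{equation*}
and Proposition \ref{Prop:AbschMEExponent} combined with the finiteness of $\sigma$ shows that the symmetric difference of the two truncation regions is contained in a bounded range of $r$ on which the integrand is bounded, so $\ell(x)$ is a well-defined vector in $\RR^d$.

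With the characteristics in hand, the symbol representation for It\^o processes from \cite{Schnurr} gives
\begin{equation*}
q(x,\xi)=-i\langle \ell(x),\xi\rangle+\int_{\Gamma}\bigl(1-e^{i\langle y,\xi\rangle}+1_{\lbrace\|y\|<1\rbrace}i\langle y,\xi\rangle\bigr)\phi_x(dy).
\end{equation*}
Passing to polar coordinates through Lemma \ref{Lemma:BererchnungIntegralOSLLevyMass} and inserting the explicit expression for $\ell(x)$ causes the two indicator functions to telescope: the drift contribution $-i\langle\ell(x),\xi\rangle$ exactly converts the indicator $1_{\lbrace\|r^{E(x)}\theta\|<1\rbrace}$ in the integrand into $1_{\lbrace 0<r<1\rbrace}$, producing precisely \eqref{Formel:SymbolAnfangKap6}.

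The step I expect to be most delicate is the bookkeeping for the truncation mismatch: one must check that the set where $r<1$ but $\|r^{E(x)}\theta\|\geq 1$, together with its counterpart for $r\geq 1$, carries finite mass under $\sigma(d\theta)\,r^{-2}dr$ and that the integrand $r^{E(x)}\theta$ is bounded there, so that $\ell(x)$ is well defined and the separation of the drift from the jump integral is justified despite the fact that $1-e^{i\langle r^{E(x)}\theta,\xi\rangle}$ and $i\langle r^{E(x)}\theta,\xi\rangle$ are each only conditionally integrable near $r=0$. This is exactly where assumption (E3) enters: the lower bound $a>1/2$ on the real parts of the eigenvalues of $E(x)$, via Proposition \ref{Prop:AbschMEExponent}, forces $\|r^{E(x)}\theta\|\to 0$ as $r\downarrow 0$ and thereby confines the mismatch region to a compact interval of $r$-values on which all integrands are uniformly bounded.
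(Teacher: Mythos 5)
Your route is genuinely different from the paper's, which simply invokes Theorem~5.7 of \cite{Alex} (a general result on symbols of solutions to Skorokhod-type SDEs) and does no computation at all. You instead read off the differential characteristics from the SDE and then plug them into the symbol formula for It\^o processes; that is a valid alternative, and it is instructive because it makes visible exactly how the SDE's structure is reflected in the symbol.

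However, the ``delicate point'' you single out---the truncation mismatch between $1_{\lbrace r<1\rbrace}$ and $1_{\lbrace\|y\|<1\rbrace}$---is in fact a non-issue in this setting, and noticing this collapses your argument. Because $E(x)$ is symmetric (E1) with all eigenvalues $\lambda_j(x)\geq a>0$ (E3), write $E(x)=O(x)D(x)O(x)^{-1}$ with $O$ orthogonal and $D=\mathrm{diag}(\lambda_1,\dots,\lambda_d)$; then for $\theta\in\SD$ and $\eta:=O(x)^{-1}\theta$ (which has $\|\eta\|=1$),
\begin{equation*}
\|r^{E(x)}\theta\|^{2}=\sum_{j=1}^{d} r^{2\lambda_j(x)}\eta_j^{2},
\end{equation*}
which is strictly increasing in $r$ and equals $1$ at $r=1$. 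Hence $\|r^{E(x)}\theta\|<1\iff r<1$, the two truncation sets coincide exactly, and $\ell(x)\equiv 0$. There is nothing to telescope: the identity $1_{\lbrace\|r^{E(x)}\theta\|<1\rbrace}=1_{\lbrace 0<r<1\rbrace}$ holds pointwise, and the symbol formula with $Q=0$, $\ell=0$, $N(x,\cdot)=\phi_x$ immediately yields \eqref{Formel:SymbolAnfangKap6} via Lemma~\ref{Lemma:BererchnungIntegralOSLLevyMass}. (This is precisely the content of the remark after Lemma~\ref{Lemma:AbschVerallgPolarkoordianten} in the appendix: for symmetric $E(x)$ one may take $\|\cdot\|_0=\|\cdot\|$, so $\tau_x(\xi)<1\iff\|\xi\|<1$.) Your bookkeeping is not wrong, since with an empty mismatch region it produces $\ell(x)=0$, but presenting it as the crux obscures a structural fact that the paper relies on repeatedly---notably, that the resulting symbol has no drift term and is therefore real when $\sigma$ is symmetric.
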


\begin{proof}
This follows directly by \cite{Alex} Theorem 5.7.
\end{proof}

In the PhD-Thesis of D. Schulte, the following is shown in addition (c.f. \cite{Daniel} Satz 5.24 and Satz 5.18). We cite this here, since it is of some interest in its own right. 

\begin{theorem}
If $E(x)$ is a bounded admissible exponent, the solution of \eqref{SDE} is a rich Feller process, that is, a Feller process with the additional property that 
$C_c^\infty(\bbr^d) \subseteq D(A)$, where $D(A)$ denotes the (strong) domain of the generator $A$. 
\end{theorem}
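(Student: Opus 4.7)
My plan is to establish the Feller property first and then deduce the richness from it, with hypothesis (E4) providing crucial uniform-in-$x$ estimates. Throughout, I make essential use of the upper bound $\|r^{E(x)}\| \leq C r^{b}$ for $r \geq 1$ from Proposition \ref{Prop:AbschMEExponent}, which is only available under (E4).

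\textbf{Feller property.} Continuity of $x \mapsto T_t f(x)$ for $f \in C_b(\RR^d)$ comes from continuous dependence on the initial condition: applying It\^o's isometry to the reduced SDE \eqref{reducedSDE} with Lemma \ref{Lem:AbschaetzungDiffExponenten}, exactly as in the proof of Theorem \ref{Satz:SDEeindeutigloesbar}, combined with Gronwall's inequality yields $\mathbb{E}\|M_t^x - M_t^y\|^2 \leq C(t)\|x-y\|^2$; interlacing extends this to the full solution and dominated convergence transfers continuity to $T_t f$. To show $T_t f \in C_{\infty}(\RR^d)$ I would prove uniform tightness of $X_t^x - x$ in $x$. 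The reduced part satisfies
$$\sup_{x \in \RR^d} \mathbb{E}^x \|M_t^x - x\|^2 \leq C t \sigma(\SD) \int_0^1 r^{2a-2}\, dr < \infty$$
by (E3), and the compound-Poisson sum of big jumps has uniformly bounded $p$-th moment for any $p \in (0, 1/b)$ thanks to (E4). Consequently $\sup_x \PP^x(\|X_t^x - x\| > R) \to 0$ as $R \to \infty$. Writing $f = f_c + g$ with $f_c \in C_c$ and $\|g\|_\infty < \varepsilon$ then gives $\limsup_{\|x\|\to\infty}|T_t f(x)| \leq 2\varepsilon$, so $T_t f \in C_\infty$. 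Strong continuity $\|T_t f - f\|_\infty \to 0$ as $t \downarrow 0$ follows from right-continuity of paths together with the same uniform moment estimate.

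\textbf{Richness.} For $f \in C_c^\infty(\RR^d)$, It\^o's formula for semimartingales, together with the form of the differential characteristics, yields the Dynkin identity
$$T_t f(x) - f(x) = \mathbb{E}^x \int_0^t \mathcal{L}f(X_{s-}^x)\, ds,$$
where
$$\mathcal{L}f(x) = \int_{\Gamma}\bigl[f(x+y)-f(x)-\langle \nabla f(x), y\rangle \mathbf{1}_{\{\|y\|<1\}}\bigr]\phi_x(dy)$$
is the integro-differential operator associated with the symbol \eqref{Formel:SymbolAnfangKap6}. I would then verify $\mathcal{L}f \in C_{\infty}(\RR^d)$: pointwise continuity in $x$ comes from continuity of $x \mapsto E(x)$ and dominated convergence, with Taylor's theorem bounding the integrand by $C(\|y\|^2 \wedge 1)$ and the uniform integrability $\sup_x \int(\|y\|^2 \wedge 1)\phi_x(dy) < \infty$ following from (E3) and (E4); vanishing at infinity uses the compact support of $f$ together with the tightness established in the Feller step. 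By the strong continuity of $T_s$ on $C_\infty$, we obtain $t^{-1}\int_0^t T_s(\mathcal{L}f)\, ds \to \mathcal{L}f$ in sup-norm, so $f \in D(A)$ with $Af = \mathcal{L}f$.

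\textbf{Main obstacle.} The delicate step is the vanishing at infinity of $T_t f$. This rests on the uniform tightness of $X_t^x - x$, which depends crucially on (E4); without an upper bound on the eigenvalues, the large-jump distribution $\phi_x$ could have arbitrarily heavy tails at certain points $x$ and no uniform moment of positive order need exist. Everything in the richness step --- including $\mathcal{L}f \in C_\infty$ --- ultimately rests on this uniform control.
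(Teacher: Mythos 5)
The paper does not actually prove this theorem: immediately before the statement it says ``In the PhD-Thesis of D.\ Schulte, the following is shown in addition (c.f.\ \cite{Daniel} Satz 5.24 and Satz 5.18). We cite this here, since it is of some interest in its own right.'' So there is no proof in the paper to compare against. Your overall strategy (establish Feller via continuity in $x$, tightness at infinity, strong continuity; then richness via Dynkin's formula and $\mathcal{L}f\in C_\infty$) is a sensible and standard one, and most of the pieces are sound: the uniform tightness argument using (E3) for the small-jump martingale and (E4) for the big-jump part is correct, and the richness step is a routine consequence once you know $\mathcal{L}f\in C_\infty$ and the semigroup is strongly continuous on $C_\infty$.

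However, there is a genuine gap in your continuity step. The $L^2$ Gronwall bound $\mathbb{E}\|M_t^x-M_t^y\|^2\leq C(t)\|x-y\|^2$ is fine for the reduced SDE \eqref{reducedSDE}, but the phrase ``interlacing extends this to the full solution'' is not correct as an $L^2$ statement. At the time $T_1$ of the first big jump, of radial size $r_1\geq 1$, the two coupled solutions receive jumps $r_1^{E(X_{T_1-}^x)}\theta_1$ and $r_1^{E(X_{T_1-}^y)}\theta_1$, and the Van Loan / Lemma~\ref{Lem:AbschaetzungDiffExponenten}-type estimate (the $r\geq 1$ analogue, which requires (E4)) gives
\[
\bigl\| r_1^{E(X_{T_1-}^x)}-r_1^{E(X_{T_1-}^y)}\bigr\| \leq C\,\|X_{T_1-}^x-X_{T_1-}^y\|\, r_1^{\,b+\delta}.
\]
Since $r_1$ has density proportional to $r^{-2}$ on $[1,\infty)$ and $b\geq a>\tfrac12$, we have $\mathbb{E}[r_1^{2(b+\delta)}]=\infty$, so the conditional $L^2$ distance immediately after a big jump blows up. Thus the full solution is \emph{not} Lipschitz in the initial condition in $L^2$, and the estimate you rely on does not extend past the first large jump. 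The Feller conclusion is still salvageable, but it needs a weaker mode of convergence: condition on the event $\{\text{all big jumps on }[0,t]\text{ have }r_i\leq R\}$, which has probability close to $1$ for $R$ large; on this event the jump map has a bounded Lipschitz constant and the interlaced Gronwall argument gives convergence $X_t^{x_n}\to X_t^x$ pathwise, and hence in probability overall. Bounded convergence then yields continuity of $T_t f$ for $f\in C_b$. In short: replace your $L^2$ Gronwall claim for the full SDE with a convergence-in-probability argument via truncation of the large jump sizes, otherwise the step is false as written.
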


By a direct calculation or by using well known facts on the symbol, we obtain the generator of the process. Under the additional assumption (E4), that is, in the Feller framework, it is the classical generator on $C^2_c(\bbr^d)$; in the more general case it is the extended generator (cf. \cite{Alex} Def. 1.17, \cite{vierleute} Def. 7.1). 
We recall the definition for the reader's convenience: 
\begin{definition} \label{def:extgen}
Let $X$ be a Markov process. A function $u\in B_b(\bbr^d)$ is said to belong to the \emph{domain of the extended generator} of $X$, written as $u\in D(A_{ext})$, if there exists a $w\in (\cb^d)^*$, the universally measurable functions, such that the process
\[
M_t^{[u]}=u(X_t)-u(X_0)-\int_0^t w(X_s) \ ds
\]
is well defined and a local martingale (see Section 2.1) for every $\bbp^x$. If we choose for every $u\in D(A_{ext})$ one $w$ with this property, we write $A_{ext} u:=w$ and call $A_{ext}$ (a version of) the \emph{extended generator} of $X$. 
\end{definition}

\begin{theorem}\label{Satz:BerechnungErzeugerGesamt}
The extended generator of the solution $X^{x}$ of the SDE \eqref{SDE} can be written (for $u \in C_b^2(\RR^d)$):
\begin{equation}
Au(x)=\int_{\SD} \int_0^{\infty} \left( u(x+ r ^{E(x)}\theta)- u(x) - 1_{ \lbrace 0 < r < 1\rbrace} \langle  r ^{E(x)}\theta , \nabla u(x) \rangle \right) \frac{dr}{r^2} \sigma(d \theta).
\end{equation}
If $(E4)$ is satisfied, the generator of the Feller process $X^{x}$ has the same representation (for $u \in C_b^2(\RR^d)$).
\end{theorem}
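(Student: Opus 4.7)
The plan is to apply Itô's formula to $u(X_t)$ along a solution $X$ of \eqref{SDE}, substitute the SDE, and identify the drift of the resulting special-semimartingale decomposition as $\int_0^t Au(X_{s-})\,ds$. Since $X^c\equiv 0$ and $X$ is a semimartingale by Theorem \ref{Satz:SDEeindeutigloesbar}, Itô's formula for $u\in C_b^2$ yields
$$u(X_t)-u(X_0)=\int_0^t\nabla u(X_{s-})\cdot dX_s + \sum_{0<s\le t}\bigl[u(X_s)-u(X_{s-})-\nabla u(X_{s-})\cdot\Delta X_s\bigr].$$
Plugging \eqref{SDE} into the stochastic integral and splitting the jump sum according to $\{r<1\}$ or $\{r\ge 1\}$ exhibits four contributions that I would treat separately.

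For the small-$r$ gradient integral inherited from \eqref{SDE}, namely $\int_0^t\!\int_{\SD}\!\int_0^1\nabla u(X_{s-})\cdot r^{E(X_{s-})}\theta\,\tilde N$, the integrand is already against $\tilde N$, so it is a local martingale and produces no drift. For the small-$r$ Taylor remainder, Proposition \ref{Prop:AbschMEExponent} gives $\|r^{E(x)}\theta\|\le Cr^{a}$, hence the summands are bounded by $\tfrac12\|D^2u\|_\infty C^2r^{2a}$; since $2a>1$ this is integrable against $r^{-2}dr\,\sigma(d\theta)$, and compensating turns that sum into a local martingale plus $\int_0^t G_{<1}(X_{s-})\,ds$ with
$$G_{<1}(x):=\int_{\SD}\!\int_0^1\bigl[u(x+r^{E(x)}\theta)-u(x)-\nabla u(x)\cdot r^{E(x)}\theta\bigr]\frac{dr}{r^2}\sigma(d\theta).$$

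The essential subtlety is on $\{r\ge 1\}$. Without (E4) there is no uniform polynomial bound on $\|r^{E(x)}\|$ for large $r$, so the bare gradient integral $\int_1^\infty\nabla u(X_{s-})\cdot r^{E(X_{s-})}\theta\,N$ inherited from \eqref{SDE} and the large-$r$ Taylor-remainder sum, viewed individually, would have compensators of the form $\int_{\SD}\!\int_1^\infty r^{E(x)}\theta\,r^{-2}dr\,\sigma(d\theta)$, which need not converge. The fix I would use is to recombine the two pieces \emph{before} compensating, via
$$\nabla u(x)\cdot r^{E(x)}\theta+\bigl[u(x+r^{E(x)}\theta)-u(x)-\nabla u(x)\cdot r^{E(x)}\theta\bigr]=u(x+r^{E(x)}\theta)-u(x),$$
whose absolute value is at most $2\|u\|_\infty$. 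Because the intensity $\sigma\otimes r^{-2}dr$ is finite on $\{r\ge 1\}$, this combined integrand compensates cleanly to a local martingale plus $\int_0^t G_{\ge 1}(X_{s-})\,ds$ with $G_{\ge 1}(x):=\int_{\SD}\!\int_1^\infty[u(x+r^{E(x)}\theta)-u(x)]\,r^{-2}dr\,\sigma(d\theta)$. This recombination is the main obstacle: without it no admissible-exponent-only argument can reach a finite drift.

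Collecting the drifts, $G_{<1}+G_{\ge 1}$ is exactly the displayed formula for $Au$, and the same bounds give $\|Au\|_\infty\le C\|D^2u\|_\infty\sigma(\SD)+2\|u\|_\infty\sigma(\SD)<\infty$, so $Au\in B_b(\RR^d)$. Thus $u(X_t)-u(X_0)-\int_0^t Au(X_{s-})\,ds$ is a local martingale under every $\PP^x$, which by Definition \ref{def:extgen} identifies $A_{ext}u=Au$. For the second statement, under (E4) Proposition \ref{Prop:AbschMEExponent} supplies the complementary estimate $\|r^{E(x)}\|\le Cr^{b}$ for $r\ge 1$; combined with the Lipschitz property (E2) this provides a uniform majorant of the integrand, so dominated convergence yields $Au\in C_b(\RR^d)$. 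Together with the Feller property and the inclusion $C_c^\infty(\RR^d)\subseteq D(A)$ cited from \cite{Daniel}, and the standard agreement of the strong and extended generators on common $C^2$ test functions, the same pointwise formula realises the Feller generator on $C_b^2$.
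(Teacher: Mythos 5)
Your Itô-formula calculation is correct and is exactly the ``direct calculation'' the paper only gestures at without writing out. The application of the formula with $X^c\equiv 0$, the identification of the jump sum with an $N$-integral, and above all the crucial recombination of the gradient term and the Taylor remainder on $\{r\geq 1\}$ before compensating — so that the integrand collapses to the uniformly bounded $u(x+r^{E(x)}\theta)-u(x)$, avoiding the possibly divergent first-moment integral $\int_1^\infty r^{\Lambda(x)-2}\,dr$ when $\Lambda(x)\geq 1$ — is precisely what is needed and is correctly justified by Proposition \ref{Prop:AbschMEExponent} and the finiteness of $\sigma\otimes r^{-2}dr$ on $\{r\geq 1\}$. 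Your estimates for the small-jump part and the conclusion that $M^{[u]}$ is a local martingale under each $\PP^x$ are also sound, so $A_{\mathrm{ext}}u=Au$ per Definition \ref{def:extgen}.

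One caveat on the final paragraph: for the Feller statement, the strong generator of a $C_\infty$-Feller semigroup has $D(A)\subseteq C_\infty(\RR^d)$, so one cannot literally have $C_b^2(\RR^d)\subseteq D(A)$; the functions in question must at least vanish at infinity. The cleaner statement (which the discussion preceding the theorem in the paper also suggests) is that the Feller generator is given by the displayed formula on $C_c^2(\RR^d)$ or $C_\infty^2(\RR^d)$, obtained by combining $C_c^\infty(\RR^d)\subseteq D(A)$ from \cite{Daniel} with the agreement of the strong and extended generators there, and then extending by density and the closedness of $A$. Your dominated-convergence argument for continuity of $Au$ is fine (and in fact does not even need (E4), since the large-jump integrand is bounded by $2\|u\|_\infty$ independently of $x$); (E4) enters only through the Feller property itself.
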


By now, we have only put restrictions on the exponent $E(x)$ and have allowed for arbitrary finite measures on the unit sphere  $\SD$
However, the aim of the present article is to construct processes whose $x$-dependent L\'evy triplet, that is, the differential characteristics,  are of the form $(0,0, \phi_x(\cdot))$. In other words we seek processes, whose symbol can be written by using the operator-stable-like L\'evy measure $\phi_x$, that is, 
\[
q(x,\xi)= \int_{\Gamma} \left( 1- e^{i \langle y, \xi \rangle} +1_{\lbrace \| y \| <1 \rbrace} i \langle y, \xi \rangle \right) \phi_x(dy)
\]
\change{where $\Gamma$ denotes $\RR^d \setminus \lbrace 0 \rbrace$}. Recall Lemma \ref{Lemma:BererchnungIntegralOSLLevyMass}, that is, for $f : \Gamma \rightarrow \RR_+$ we obtain
\begin{align} \label{intequation}
  \int_{\Gamma} f(y) \phi_x(dy) =\int_{\SD} \int_0^{\infty} f(r^{E(x)} \theta ) \frac{dr}{r^2} \sigma(d \theta).
\end{align}
Because of the indicator function in the integrand, generator and symbol can not be represented via the OSL L\'evy measure. In order to obtain this we have to pose an additional condition on $\sigma$, namely that  $\sigma$ is symmetric, that is, $\sigma(A)= \sigma(-A)$ for all $A \in \calB( \SD)$. 


By this symmetry assumption we obtain the following result. The proof which uses \eqref{intequation} is standard and hence omitted. 
\begin{theorem}
If $ \sigma$ is symmetric on $\SD$, the symbol \eqref{Formel:SymbolAnfangKap6} admits for $x,\xi \in \RR^d$ the representation:
\begin{equation}
q(x, \xi)=\int_{\SD} \int_0^{\infty} \left( 1- \cos( \langle r^{E(x)}\theta, \xi \rangle ) \right) \frac{dr}{r^2} \sigma(d \theta) =\int_{\Gamma} \left( 1- \cos( \langle y, \xi \rangle ) \right) \phi_x(dy) .
\end{equation}
\end{theorem}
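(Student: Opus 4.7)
The plan is to rewrite the integrand $1 - e^{i\langle r^{E(x)}\theta,\xi\rangle} + 1_{(0,1)}(r)\, i\langle r^{E(x)}\theta,\xi\rangle$ appearing in the symbol formula \eqref{Formel:SymbolAnfangKap6} as $\cos + i\sin$ and then exploit the oddness of the imaginary parts in $\theta$ together with the symmetry of $\sigma$. First I would split the $r$-integral as $\int_0^1 + \int_1^\infty$ and check that each piece is absolutely convergent: on $(0,1)$ the bound $|1-e^{iu}+iu|\leq \tfrac12 u^2$ together with Proposition \ref{Prop:AbschMEExponent} gives an integrand controlled by $C\|\xi\|^2 r^{2a-2}$, integrable since $a>1/2$; on $[1,\infty)$ the trivial bound $|1-e^{iu}|\leq 2$ gives integrability against $dr/r^2$. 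This justifies Fubini and allows arbitrary rearrangement.

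Next I would use that $E(x)$ is symmetric (property (E1)), so $r^{E(x)}$ is symmetric, and consequently $\theta\mapsto\langle r^{E(x)}\theta,\xi\rangle$ is an odd function on $\SD$. Consider the two imaginary parts separately. On $[1,\infty)$ the imaginary part is $-\sin\langle r^{E(x)}\theta,\xi\rangle$, an odd function of $\theta$; on $(0,1)$ it is $\langle r^{E(x)}\theta,\xi\rangle - \sin\langle r^{E(x)}\theta,\xi\rangle$, again odd in $\theta$. Under the substitution $\theta\mapsto-\theta$, the symmetry $\sigma(A)=\sigma(-A)$ therefore forces both imaginary contributions to vanish. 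What remains in each piece is $1-\cos\langle r^{E(x)}\theta,\xi\rangle\geq 0$, and since this is non-negative we may merge the two intervals back into $(0,\infty)$ (the indicator is no longer needed as no compensation is required):
\begin{equation*}
q(x,\xi)=\int_{\SD}\int_0^\infty \bigl(1-\cos\langle r^{E(x)}\theta,\xi\rangle\bigr)\,\frac{dr}{r^2}\,\sigma(d\theta).
\end{equation*}

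For the second equality I would apply Lemma \ref{Lemma:BererchnungIntegralOSLLevyMass} to the non-negative measurable function $f(y):=1-\cos\langle y,\xi\rangle$, which directly rewrites the iterated integral as $\int_\Gamma(1-\cos\langle y,\xi\rangle)\,\phi_x(dy)$. No real obstacle arises; the only point that deserves care is the justification of the absolute integrability used to split the integrals and apply Fubini, so that the two odd pieces may be discarded independently. Once this bookkeeping is done, the symmetry of $\sigma$ does all the work.
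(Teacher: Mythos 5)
Your proof is correct and is exactly the ``standard'' argument the paper has in mind (the paper omits the proof, noting only that it uses \eqref{intequation}, i.e.\ Lemma~\ref{Lemma:BererchnungIntegralOSLLevyMass}): establish absolute integrability by splitting $r\in(0,1)$ and $r\geq 1$, discard the odd-in-$\theta$ imaginary contributions via the symmetry of $\sigma$, recombine, and apply Lemma~\ref{Lemma:BererchnungIntegralOSLLevyMass} to $f(y)=1-\cos\langle y,\xi\rangle$. One small slip in the justification: the oddness of $\theta\mapsto\langle r^{E(x)}\theta,\xi\rangle$ follows from the \emph{linearity} of $r^{E(x)}$ alone and does not use that the matrix is symmetric, so invoking (E1) there is a red herring --- the symmetry of $E(x)$ is what allows the sphere in the disintegration to be taken as $\SD$ in the first place, not what makes the integrand odd.
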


The symmetry of  $\sigma$ is by Remark \ref{Bem:SkalierungseigOSLLevyMass} equivalent to the symmetry of the OSL L\'evy measure $\phi_x$.

\begin{definition}
Let $X=\left( X_t \right)_{t \geq 0}$ be a Markov semimartingale with symbol
\begin{equation}
q(x, \xi)=\int_{\Gamma} \left( 1- \cos( \langle y, \xi \rangle ) \right) \phi_x(dy) , \quad x \in \RR^d,
\end{equation}
where $\phi_x( \cdot)$ is a symmetric operator-stable-like L\'evy measure. We call $X$ \emph{operator-stable-like process}, OSL process, for short. 
\end{definition}

We have thus constructed an It\^o process
having the $x$-dependent L\'evy-triplet  $(0,0,\phi_x( \cdot))$ .
Recall that it is even a rich Feller process if the additional boundedness assumption (E4) is satisfied.

\begin{proposition}
The (extended) generator $A$ of an OSL process $X$ admits the representation
\begin{equation}
Au(x)= \int_{\Gamma} \big( u(x+y)-u(x) \big) \phi_x(dy) 
\end{equation}
for $u \in C_c^{\infty}(\RR^d)$.
\end{proposition}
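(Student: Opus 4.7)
The starting point is Theorem \ref{Satz:BerechnungErzeugerGesamt}, which already expresses $Au(x)$ as a double integral over $\SD\times(0,\infty)$ against $\sigma(d\theta)\otimes r^{-2}dr$ and containing the drift compensator $1_{\{0<r<1\}}\langle r^{E(x)}\theta,\nabla u(x)\rangle$. The goal is, on the one hand, to remove this compensator and, on the other, to transport the $\SD\times(0,\infty)$-integral back to an integral over $\Gamma$ against $\phi_x$ by means of Lemma \ref{Lemma:BererchnungIntegralOSLLevyMass}.

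First I would exploit the symmetry of $\sigma$ (equivalently, of $\phi_x$). Linearity of $r^{E(x)}$ gives $r^{E(x)}(-\theta)=-r^{E(x)}\theta$, so the compensator is odd in $\theta$; symmetrizing the integrand in $\theta$ against $\sigma(d\theta)$ replaces it by
\[
\tfrac{1}{2}\bigl(u(x+r^{E(x)}\theta)+u(x-r^{E(x)}\theta)-2u(x)\bigr),
\]
in which the compensator has vanished. Taylor's formula applied to $u\in C_c^\infty(\RR^d)$ yields $|u(x+h)+u(x-h)-2u(x)|\leq\|D^2u\|_\infty\|h\|^2$; combined with Proposition \ref{Prop:AbschMEExponent}, the symmetrized integrand is $O(r^{2a-2})$ near $r=0$, integrable since $2a>1$, and $O(r^{-2})$ at infinity by boundedness of $u$, integrable as well (the $\sigma$-integration being over a finite measure). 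The symmetrized double integral therefore converges absolutely, which justifies both the use of Fubini and the symmetrization step itself.

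Second, I would apply Lemma \ref{Lemma:BererchnungIntegralOSLLevyMass} (to positive and negative parts of the integrand separately, then subtracting) in order to convert the $\SD\times(0,\infty)$-integral into an integral over $\Gamma$, obtaining
\[
Au(x)=\int_\Gamma\frac{u(x+y)+u(x-y)-2u(x)}{2}\,\phi_x(dy).
\]
The symmetry of $\phi_x$ then gives, interpreted as a principal value around the origin,
\[
\int_\Gamma\frac{u(x+y)+u(x-y)-2u(x)}{2}\,\phi_x(dy)=\int_\Gamma\bigl(u(x+y)-u(x)\bigr)\,\phi_x(dy),
\]
which is the claimed representation, the right-hand side being read in this symmetric sense.

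The main subtlety lies precisely in that last step: for $a\leq 1$ the raw expression $\int_\Gamma(u(x+y)-u(x))\,\phi_x(dy)$ is \emph{not} absolutely convergent near $y=0$, since the $O(\|y\|)$ behavior of $u(x+y)-u(x)$ is not enough against the singularity of $\phi_x$. The symmetrization performed in the first step is the essential device: it simultaneously eliminates the compensator and upgrades the integrand to order $\|y\|^2$ near the origin, so that the final formula has rigorous meaning in the symmetric (principal-value) interpretation natural for symmetric L\'evy measures.
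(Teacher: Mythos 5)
The paper states this proposition without a proof, so there is no argument to compare yours against; your derivation supplies one, and it is correct. The two ingredients you use — symmetrizing in $\theta$ against the symmetric measure $\sigma$ to eliminate the compensator and upgrade the near-origin singularity to $O(r^{2a-2})$ via Proposition \ref{Prop:AbschMEExponent} and a second-order Taylor bound, and then Lemma \ref{Lemma:BererchnungIntegralOSLLevyMass} to carry the $\SD\times(0,\infty)$-integral to a $\phi_x$-integral over $\Gamma$ — are exactly the right moves starting from Theorem \ref{Satz:BerechnungErzeugerGesamt}. Your final remark is also the key point the paper glosses over: since only $a>1/2$ is assumed, one may well have $a\leq 1$, in which case the displayed expression $\int_\Gamma(u(x+y)-u(x))\,\phi_x(dy)$ need not be absolutely convergent near the origin (already in $d=1$ with exponent $a$, $\int_0^1 r^{a-2}\,dr=\infty$). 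The clean reading is the one you derive,
\[
Au(x)=\int_\Gamma \tfrac12\bigl(u(x+y)+u(x-y)-2u(x)\bigr)\,\phi_x(dy),
\]
which is absolutely convergent and, by symmetry of $\phi_x$, agrees with the stated formula understood as a principal value around $0$. That makes the proposition rigorous as stated, and your proof is a valid one.
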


\begin{example}\label{Bsp:BerechnungSymbolStableLikeProzess}
Let us shortly come back to the $\alpha$-stable-like process of Example \ref{Bsp:StableLikeProzessLevyMass}. 
Using $u=r^{1/ \alpha(x)} $ the L\'evy measure can be written as follows: 
\[
  \phi_x(dy)= \int_{\SD} \int_0^{\infty} 1_{dy} ( u \theta) \frac{\alpha(x)}{u^{1+\alpha(x)}} \, du \sigma(d \theta).
 \]
Since $ \sigma$ is the uniform distribution on $\SD$, we obtain
\[
  \phi_x(dy)= \frac{ C_{\alpha(x)}}{\| y \|^{d+\alpha(x)}} dy.
\]
The constant $C_{\alpha(x)}>0$ is chosen such that
\[
  \| \xi \|^{\alpha(x)} = C_{\alpha(x)} \int_{\Gamma} \left( 1- \cos( \langle y, \xi \rangle ) \right)\frac{dy}{\| y \|^{d+ \alpha(x)}}
\]
holds, that is, 
\[
C_{\alpha(x)} = \frac{\alpha(x) 2 ^{\alpha(x)-1} \Gamma \left( \frac{\alpha(x)+d}{2} \right)}{\pi^{d/2} \Gamma  \left( 1 - \frac{\alpha(x)}{2} \right)},
\]
cf. Exercise 18.23 in  \cite{KonstLevyMass}. The symbol of the stable-like process is hence
\[
q(x, \xi)= \int_{\Gamma} \left( 1- \cos \left( \langle y , \xi \rangle \right) \right) \phi_x(dy) = \| \xi \|^{\alpha(x)}.
\]
\end{example}

In order to derive properties of the symbol, we need the following estimate where $a$ is the lower bound for the real parts of the eigenvectors of $E(x)$ (cf. (E3)): 

\begin{lemma} \label{Lemma:AbschaetzungTaylorfuerSymbol}
There is a constant $C>0$, such that for all $x,\xi \in \RR^d$ and $ \theta \in \SD$:
\begin{equation}
\left| 1-e^{i \langle  r^{E(x)} \theta ,\xi \rangle}+i \langle r^{E(x)} \theta , \xi \rangle 1_{\lbrace 0 < r <1 \rbrace} \right| \leq C \left( 1+ \|\xi \|^2\right)  \left( 1 \wedge r^{2a} \right) .
\end{equation}
Additionally we have
\begin{equation}\label{intesti}
\int_0^{\infty} \left( 1 \wedge r^{2a}  \right) \frac{dr}{r^2} < \infty.
\end{equation}
\end{lemma}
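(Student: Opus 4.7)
The plan is to split the range of $r$ at $r=1$, since the indicator function changes there and the bounds of Proposition \ref{Prop:AbschMEExponent} (recall only assumption (E3) is available here) effectively only control $\|r^{E(x)}\|$ on $(0,1)$.

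For $r \in (0,1)$, I would apply the standard second-order Taylor estimate
\[
\left| 1 - e^{iu} + iu \right| \leq \tfrac{1}{2} u^2, \qquad u \in \RR,
\]
with $u = \langle r^{E(x)}\theta, \xi\rangle$. Cauchy--Schwarz, together with $\|\theta\|=1$ and Proposition \ref{Prop:AbschMEExponent}, gives
\[
|u|^2 \leq \|r^{E(x)}\theta\|^2 \,\|\xi\|^2 \leq \|r^{E(x)}\|^2 \,\|\xi\|^2 \leq C^2 r^{2a} \|\xi\|^2,
\]
so the left-hand side is bounded by $\tfrac{C^2}{2} r^{2a} \|\xi\|^2 \leq \tfrac{C^2}{2}(1+\|\xi\|^2)(1\wedge r^{2a})$. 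For $r\geq 1$, the indicator vanishes and the crude bound $|1-e^{iu}| \leq 2$ yields
\[
\left| 1 - e^{i\langle r^{E(x)}\theta,\xi\rangle} \right| \leq 2 \leq 2(1+\|\xi\|^2)(1\wedge r^{2a}),
\]
since $1\wedge r^{2a} = 1$ on $[1,\infty)$. Combining both regimes and enlarging the constant gives the claimed inequality.

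For the integrability claim \eqref{intesti}, I would split the integral as
\[
\int_0^\infty (1\wedge r^{2a})\,\frac{dr}{r^2} = \int_0^1 r^{2a-2}\,dr + \int_1^\infty r^{-2}\,dr.
\]
Both integrals converge: the first because $2a-2 > -1$ (which is precisely the content of $a > \tfrac{1}{2}$ in (E3)), and the second trivially, giving the value $\frac{1}{2a-1}+1$.

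The only delicate point is the first case; the computation itself is routine, but it is worth emphasising that the $r \in (0,1)$ bound relies crucially on (E3) through Proposition \ref{Prop:AbschMEExponent}, and that (E4) is \emph{not} invoked because on $[1,\infty)$ the naive estimate $|1-e^{iu}|\leq 2$ already suffices once the indicator drops out. This is why the lemma holds for merely admissible (not necessarily bounded admissible) exponents.
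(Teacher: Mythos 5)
Your proof is correct and follows essentially the same route as the paper's: split at $r=1$, use $|1-e^{iu}|\leq 2$ for $r\geq 1$, use the second-order Taylor bound together with Cauchy--Schwarz and Proposition \ref{Prop:AbschMEExponent} for $r\in(0,1)$, and then observe that $a>\tfrac12$ makes $\int_0^1 r^{2a-2}\,dr$ and $\int_1^\infty r^{-2}\,dr$ converge. The only cosmetic difference is that the paper writes the two regimes as a single chain of inequalities with indicator functions rather than as explicit cases, and your remark that (E4) is not needed is a correct and worthwhile observation already implicit in the paper's use of only the first estimate of Proposition \ref{Prop:AbschMEExponent}.
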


\begin{proof}
Observe that
\begin{align*}
\left|1- e^{i \langle r^{E(x)} \theta , \xi \rangle } \right. &+ \left. i \langle  r^{E(x)} \theta , \xi\rangle 1_{\lbrace 0 < r <1 \rbrace} \right| \\
& \leq \left| \left( 1-e^{i \langle  r^{E(x)} \theta ,\xi \rangle}\right) 1_{\lbrace r \geq 1\rbrace } \right| + \left|  \left( 1-e^{i \langle  r^{E(x)} \theta , \xi \rangle}+i \langle r^{E(x)} \theta ,\xi \rangle \right) 1_{\lbrace 0 < r <1 \rbrace} \right| \\
& \leq  2 \cdot 1_{\lbrace r \geq 1 \rbrace}  +  \left| \langle r^{E(x)} \theta , \xi \rangle \right|^2 1_{\lbrace 0<r<1 \rbrace}  \\
& \leq   2 \cdot  1_{\lbrace r \geq 1\rbrace} +  \| r^{E(x)} \theta \|^2 \| \xi \|^2  1_{\lbrace 0<r<1 \rbrace}  \\
& \leq  2 \cdot  1_{\lbrace r \geq 1\rbrace}  +   \| r^{E(x)} \|^2 \| \theta \|^2 \| \xi \|^2  1_{\lbrace 0<r<1 \rbrace}  \\
& \leq  2 \cdot  1_{\lbrace r \geq 1\rbrace}  + \left( Cr^{2a} \| \xi \|^2  1_{\lbrace 0<r<1 \rbrace} \right) \\
& \leq  C \left( 1+ \|\xi \|^2 \right) \left( 1 \wedge r^{2a} \right),
\end{align*}
where we have used a Taylor expansion in the second summand, Cauchy-Schwarz inequality and the first inequality in Proposition \ref{Prop:AbschMEExponent}.
Since $a >1/2$, \eqref{intesti} follows.
\end{proof}

Subsequently we analyze properties of the symbol. These will help us to derive properties of the process. 

\begin{theorem} The symbol $q(x,\xi)$  of an OSL process has the following properties:
\begin{enumerate}
\item[(i)] For all  $x \in \RR^d$ we have $q(x,0)=0$;
\item[(ii)] The symbol is bounded, that is, there exists a constant $C>0$ such that 
\[
  \sup_{x \in \RR^d} |q(x, \xi) | \leq C \left( 1+ \| \xi \|^2 \right) \quad \text{ for all } \xi \in \RR^d ;
\]
\item[(iii)] $x \mapsto q(x, \xi)  \text{ is continuous for all } \xi \in \RR^d$;
\item[(iv)] The symbol $q(x, \cdot)$ is symmetric in $\xi$ for all $x \in \RR^d$.
\item[(v)] The symbol satisfies the so called sector condition, that is, there exists a constant $C>0$ with
\begin{align} \label{sector}
| \mathrm{Im}\, q(x,\xi) | \leq C \, \mathrm{Re} \, q(x, \xi) \quad \text{ for all } x,\xi \in \RR^d.
\end{align}
\end{enumerate}
\label{Satz:reelleSymbolEigenschaftenElementar}
\end{theorem}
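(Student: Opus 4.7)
The plan is to work directly from the cosine representation $q(x,\xi)=\int_\Gamma(1-\cos\langle y,\xi\rangle)\phi_x(dy)$ together with the polar desintegration, and to control the integrand uniformly in $x$ by means of Proposition \ref{Prop:AbschMEExponent} and Lemma \ref{Lemma:AbschaetzungTaylorfuerSymbol}.

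Items (i), (iv), (v) are essentially immediate. Plugging in $\xi=0$ gives $1-\cos 0=0$, hence (i). Since the cosine is even in $\xi$, property (iv) follows. For the sector condition (v), note that the cosine representation shows $q(x,\xi)\in[0,\infty)$ for every $x,\xi$, so $\mathrm{Im}\,q\equiv 0$ and $\mathrm{Re}\,q=q\geq 0$; in particular \eqref{sector} holds trivially for any $C>0$.

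For (ii), I would apply Lemma \ref{Lemma:AbschaetzungTaylorfuerSymbol} to the cosine integrand, whose modulus is dominated by that of the general L\'evy integrand appearing in the Lemma. This yields the pointwise bound
\[
\bigl|1-\cos\langle r^{E(x)}\theta,\xi\rangle\bigr|\leq C\bigl(1+\|\xi\|^2\bigr)\bigl(1\wedge r^{2a}\bigr),
\]
where the constant $C$ depends only on the uniform constant from Proposition \ref{Prop:AbschMEExponent}, which in turn depends only on the global lower bound $a>1/2$ from (E3). Integrating against $\sigma(d\theta)\,r^{-2}dr$ and using the integrability \eqref{intesti} then gives $\sup_{x\in\RR^d}|q(x,\xi)|\leq C'(1+\|\xi\|^2)$.

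For the continuity (iii), fix $\xi$ and take a sequence $x_n\to x$. By the Lipschitz hypothesis (E2), $E(x_n)\to E(x)$ in operator norm, so by continuity of the matrix exponential $r^{E(x_n)}\theta\to r^{E(x)}\theta$ for each $(r,\theta)\in(0,\infty)\times\SD$, and hence $\cos\langle r^{E(x_n)}\theta,\xi\rangle\to\cos\langle r^{E(x)}\theta,\xi\rangle$ pointwise. The dominating function from (ii), integrable against $\sigma(d\theta)\,r^{-2}dr$, is independent of $x$, so dominated convergence yields $q(x_n,\xi)\to q(x,\xi)$. The only step that is not a one-line verification is ensuring that the bound in Proposition \ref{Prop:AbschMEExponent} is uniform in $x$; this is the main (mild) obstacle, and it is automatic because the constant there depends only on $a$.
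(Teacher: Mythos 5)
Your proofs of (i), (ii), (iv) and (v) are correct and essentially coincide with the paper's: (i), (iv), (v) are immediate from the real, nonnegative cosine form of the symbol, and (ii) follows from Lemma \ref{Lemma:AbschaetzungTaylorfuerSymbol} together with \eqref{intesti}, exactly as the paper does it.

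Where you genuinely diverge is in (iii). The paper handles continuity in $x$ by an appeal to an external structural result (Theorem 2.30 in \cite{LevyLike}), namely that for this class of processes continuity of $x\mapsto q(x,0)$ is equivalent to continuity of $x\mapsto q(x,\xi)$ for every $\xi$; since $q(x,0)\equiv 0$, continuity is then automatic. You instead give a direct, self-contained argument: Lipschitz continuity of $E$ and continuity of the matrix exponential give pointwise convergence of the integrand $1-\cos\langle r^{E(x_n)}\theta,\xi\rangle$, and the uniform-in-$x$ majorant $C(1+\|\xi\|^2)(1\wedge r^{2a})$ from Proposition \ref{Prop:AbschMEExponent} (whose constant depends only on the global lower bound $a$, by Theorem \ref{Satz:AllgAbschDiagOperatorEW}) permits dominated convergence. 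Both routes are valid. Your version buys self-containedness and avoids reliance on a cited equivalence theorem whose hypotheses must be checked against the present (not necessarily Feller) setting; the paper's version is shorter but more opaque to a reader without \cite{LevyLike} at hand.
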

\begin{proof}
Property (i) is obvious, since we only consider conservative processes. \\
Ad property (ii):
By Lemma \ref{Lemma:AbschaetzungTaylorfuerSymbol} we obtain
\begin{align*}
| q(x, \xi) | & \leq \int_{\SD} \int_0^{\infty} \left| e^{i \langle r^{E(x)} \theta , \xi \rangle}-1-i \langle r^{E(x)} \theta , \xi \rangle 1_{\lbrace 0 < r <1 \rbrace} \right|  \frac{dr}{r^2} \sigma(d \theta) \\
& \leq \int_{\SD} \int_0^{\infty} C \left( 1+ \|\xi \|^2\right)  \left( 1 \wedge r^{2a}  \right) \frac{dr}{r^2} \sigma(d \theta) \\
& \leq C \left( 1+ \|\xi \|^2\right).
\end{align*}
Ad property (iii):
This follows from Theorem 2.30 in \cite{LevyLike}, since continuity of  $x \mapsto q(x,0)$ it equivalent to continuity of $ x \mapsto q(x,\xi)$ for all  $\xi \in \RR^d$.
Properties (iv) and (v) hold by the fact that the symbol is real valued.  
\end{proof}

The following scaling property of the symbol of an OSL process is essential for analyzing many of its properties.

\begin{theorem}
The symbol $q(x,\xi)$  admits the following scaling property in the second argument for all $t >0$ 
\begin{equation}
q(x,t^{E(x)} \xi)=t q(x, \xi).
\end{equation}
\end{theorem}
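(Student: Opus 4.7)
The plan is a direct computation based on the integral representation
\[
q(x,\xi) = \int_{\SD} \int_0^\infty \bigl(1 - \cos(\langle r^{E(x)}\theta,\xi\rangle)\bigr)\,\frac{dr}{r^2}\,\sigma(d\theta),
\]
combined with two ingredients: the symmetry of $E(x)$ (property (E1)), which implies that the matrix exponentials $r^{E(x)}$ are self-adjoint, and the commutativity of $r^{E(x)}$ with $t^{E(x)}$ for fixed $x$, which follows from both matrices being functions of the single matrix $E(x)$.

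First I would substitute $t^{E(x)}\xi$ into the symbol and move the factor $t^{E(x)}$ across the inner product:
\[
\langle r^{E(x)}\theta,\, t^{E(x)}\xi\rangle = \langle t^{E(x)} r^{E(x)}\theta,\, \xi\rangle = \langle (tr)^{E(x)}\theta,\, \xi\rangle,
\]
using self-adjointness of $t^{E(x)}$ and $r^{E(x)} t^{E(x)} = (rt)^{E(x)}$ since both are exponentials of the common matrix $E(x)\ln(r)$ and $E(x)\ln(t)$, which commute. This turns the integral into
\[
q(x,t^{E(x)}\xi) = \int_{\SD}\int_0^\infty \bigl(1-\cos(\langle (rt)^{E(x)}\theta,\xi\rangle)\bigr)\,\frac{dr}{r^2}\,\sigma(d\theta).
\]

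Next I perform the change of variables $u = rt$, so $du = t\,dr$ and $r^{-2} = t^2 u^{-2}$; the factor $dr/r^2$ becomes $(t^2/u^2)\cdot(du/t) = t\,du/u^2$, and the limits of integration are unchanged since $t>0$. This yields
\[
q(x,t^{E(x)}\xi) = t\int_{\SD}\int_0^\infty \bigl(1-\cos(\langle u^{E(x)}\theta,\xi\rangle)\bigr)\,\frac{du}{u^2}\,\sigma(d\theta) = t\, q(x,\xi).
\]

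There is no real obstacle; the only point to check is that the integrals are absolutely convergent so that the substitution is rigorous, which is guaranteed by Lemma~\ref{Lemma:AbschaetzungTaylorfuerSymbol} (the symbol is finite for every $\xi$). Alternatively, one can deduce the scaling from the measure-theoretic scaling property $\bigl(t^{E(x)}\phi_x\bigr)(A) = t\,\phi_x(A)$ in Remark~\ref{Bem:SkalierungseigOSLLevyMass}(i) applied to the representation $q(x,\xi)=\int_\Gamma(1-\cos\langle y,\xi\rangle)\,\phi_x(dy)$: substituting $y \mapsto t^{E(x)}y$ in this integral produces exactly the factor $t$ and replaces $\xi$ by $(t^{E(x)})^*\xi = t^{E(x)}\xi$ inside the cosine.
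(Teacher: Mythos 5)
Your proposal is correct. The paper itself works with the undisintegrated representation $q(x,\xi)=\int_\Gamma(1-\cos\langle y,\xi\rangle)\phi_x(dy)$, uses (E1) to move $t^{E(x)}$ across the inner product, and then invokes the abstract scaling property $(t^{E(x)}\phi_x)(A)=t\phi_x(A)$ from Remark \ref{Bem:SkalierungseigOSLLevyMass}(i) as a change of measure --- exactly the "alternative" you sketch in your last sentence. Your primary route instead stays in the disintegrated coordinates $(\theta,r)$, combines the self-adjointness of $t^{E(x)}$ with the group law $t^{E(x)}r^{E(x)}=(tr)^{E(x)}$, and carries out the substitution $u=rt$ by hand; the scaling factor $t$ drops out of $\frac{dr}{r^2}$. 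The two arguments are really the same change of variables in different clothing: the paper's Remark on the $t^{E(x)}$-scaling of $\phi_x$ is itself a repackaging of your $u=rt$ computation via Lemma \ref{Lemma:BererchnungIntegralOSLLevyMass}. Your version is a bit more self-contained and explicit about where the factor $t$ comes from, while the paper's is shorter once the scaling property of $\phi_x$ is taken as given. Your remark on absolute convergence (via Lemma \ref{Lemma:AbschaetzungTaylorfuerSymbol}) to justify the substitution is a sensible addition that the paper omits.
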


\begin{proof}
By the symmetry of the exponent and the scaling property of the L\'evy measure $\phi_x( t^{-E(x)}A) =t \phi_x(A) $ for $ t>0$ for $ A \in \calB(\Gamma)$, cf. Remark \ref{Bem:SkalierungseigOSLLevyMass}:
\begin{align*}
q(x,t^{E(x)} \xi)&= \int_{\Gamma} \left( 1- \cos( \langle y, t^{E(x)} \xi \rangle) \right) \phi_x(dy) \\
&= \int_{\Gamma} \left( 1- \cos( \langle t^{E(x)} y,  \xi \rangle) \right) \phi_x(dy) \\
&= \int_{\Gamma} \left( 1- \cos( \langle y,  \xi \rangle) \right) \left(t^{E(x)}  \phi_x \right)(dy) \\
&= \int_{\Gamma} \left( 1- \cos( \langle y,  \xi \rangle) \right) t \phi_x( dy) \\
&= t q(x, \xi).
\end{align*}

\end{proof}

By the scaling property we get sharper bounds for the symbol.

\begin{theorem}\label{Satz:obereAbschSymbolReell}
Let $K$ be a compact set $ \RR^d$ and $q(x,\xi)$ be the symbol of an OSL process. In this case, there exist constants
 $C_1, \allowbreak C_2, \allowbreak C_3, \allowbreak C_4 >0$, such that
\begin{itemize}
\item[(i)] for all $x, \xi \in \RR^d$:
\begin{equation}
|q(x, \xi) | \leq 
\begin{cases}
C_1 \| \xi \|^{1/\Lambda(x)} & \text{ for } \| \xi \| \leq 1, \\
C_2 \| \xi \|^{1/\lambda(x)} & \text{ for } \| \xi \| \geq 1;
\end{cases}
\end{equation}
\item[(ii)] for all $x \in K$ and every  $ \xi \in \RR^d$: 
\begin{equation}
|q(x, \xi) | \geq 
\begin{cases}
C_3 \| \xi \|^{1/\lambda(x)} & \text{ for } \| \xi \| \leq 1, \\
C_4 \| \xi \|^{1/\Lambda(x)} & \text{ for } \| \xi \| \geq 1. 
\end{cases}
\end{equation}
\end{itemize}
\end{theorem}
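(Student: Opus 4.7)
The central idea is to combine the scaling identity $q(x, t^{E(x)}\eta) = t\, q(x,\eta)$ with the generalized polar coordinate decomposition of Appendix~A, which uniquely writes any nonzero $\xi \in \RR^d$ as $\xi = t^{E(x)}\eta$ with $t > 0$ and $\eta \in \SD$. Because $\sigma$ is symmetric, the symbol is real and nonnegative, hence $|q(x,\xi)| = t\, q(x,\eta)$; the problem therefore reduces to estimating $t$ in terms of $\|\xi\|$ and controlling $q(x,\eta)$ uniformly on the unit sphere.

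To estimate $t$, I would exploit that $E(x)$ is symmetric and positive definite: diagonalizing in an orthonormal eigenbasis with eigenvalues in $[\lambda(x),\Lambda(x)]$ makes $t^{E(x)}$ diagonal with entries $t^{\mu_i(x)}$, so for every $\eta \in \SD$
\[
  \min\!\bigl(t^{\lambda(x)}, t^{\Lambda(x)}\bigr) \leq \|t^{E(x)}\eta\| \leq \max\!\bigl(t^{\lambda(x)}, t^{\Lambda(x)}\bigr).
\]
Since $\lambda(x) > 1/2 > 0$, this forces the equivalence $t \leq 1 \iff \|\xi\| \leq 1$, and inversion yields $\|\xi\|^{1/\lambda(x)} \leq t \leq \|\xi\|^{1/\Lambda(x)}$ in the regime $\|\xi\| \leq 1$ and the reversed inequalities $\|\xi\|^{1/\Lambda(x)} \leq t \leq \|\xi\|^{1/\lambda(x)}$ for $\|\xi\| \geq 1$. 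The upper bound (i) is then immediate: Theorem~\ref{Satz:reelleSymbolEigenschaftenElementar}(ii) restricted to $\eta \in \SD$ gives $|q(x,\eta)| \leq C_0$ uniformly in $x, \eta$, so substituting the upper estimate on $t$ in each regime supplies $C_1$ and $C_2$.

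The lower bound (ii) is where the main work lies. Setting $c_K := \inf_{(x,\eta) \in K \times \SD} q(x,\eta)$, I plan to show $c_K > 0$ and take $C_3 = C_4 = c_K$; the lower estimate on $t$ then finishes the proof. Joint continuity of $q$ on $K \times \SD$ follows from Theorem~\ref{Satz:reelleSymbolEigenschaftenElementar}(iii) in the $x$-variable and from dominated convergence in $\eta$ via the pointwise bound $|1-\cos\langle y,\eta\rangle| \leq \min\!\bigl(2, \tfrac{1}{2}\|y\|^2\bigr)$, which is $\phi_x$-integrable. The genuine obstacle is the strict pointwise positivity $q(x,\eta) > 0$ for every $(x,\eta) \in K \times \SD$: for this I would observe that $r \mapsto \langle r^{E(x)}\theta,\eta\rangle$ is real-analytic on $(0,\infty)$, and, under the standing non-degeneracy hypothesis on $\sigma$ (so that for $\sigma$-a.e.\ $\theta$ this map is not identically zero), its preimage of $2\pi\ZZ$ has Lebesgue measure zero. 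Hence $1-\cos\langle r^{E(x)}\theta, \eta\rangle > 0$ on a set of positive $r^{-2}\,dr \otimes \sigma$-measure, giving $q(x,\eta) > 0$, and compactness of $K \times \SD$ upgrades this to $c_K > 0$.
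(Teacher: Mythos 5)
Your proof follows the same route as the paper's: generalized polar coordinates $\xi=\tau_x(\xi)^{E(x)}l_x(\xi)$, the scaling identity $q(x,\xi)=\tau_x(\xi)\,q(x,l_x(\xi))$, two-sided bounds on the radial component in terms of $\|\xi\|^{1/\lambda(x)}$ and $\|\xi\|^{1/\Lambda(x)}$ (your in-line diagonalization is exactly the content of Lemma~\ref{Lemma:AbschVerallgPolarkoordianten}), the global bound $|q|\leq C(1+\|\xi\|^2)$ for part (i), and compactness plus positivity of $q$ on $K\times\SD$ for part (ii). Where you go beyond the paper is in recognizing that the strict positivity $q(x,\eta)>0$ for all $\eta\in\SD$ actually needs justification: the paper simply asserts it, while you correctly trace it back to a non-degeneracy requirement on $\sigma$. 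Be aware, though, that the ``standing non-degeneracy hypothesis'' you invoke is not declared anywhere in the paper; $\sigma$ is only required to be a finite symmetric measure on $\SD$. If, for instance, $E(x)$ is a multiple of the identity and $\sigma$ is concentrated on a great subsphere $\SD\cap V$ for a proper subspace $V$, then $\phi_x$ is supported in $V$ and $q(x,\eta)=0$ for every $\eta\perp V$, so the lower bound (ii) genuinely fails. Your caution therefore exposes an implicit assumption that the paper's own proof is tacitly using; aside from this shared lacuna, your argument is sound and structurally identical to the paper's.
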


\begin{proof}
The case  $\xi =0$ is because of $q(x,0)=0$ trivially satisfied. For $ \xi \in \Gamma$ we use the representation via generalized polar coordinates:
$$ \xi=\tau_x(\xi)^{E(x)} l_x(\xi),$$
where $\tau_x(\xi) >0$ and $l_x(\xi) \in \SD$.
By the scaling property of the symbol
\begin{equation}
q(x, \xi)= q(x, \tau_x(\xi)^{E(x)} l_x(\xi))= \tau_x(\xi) q(x,l_x(\xi)).
\label{Formel:DarstSymbolVerallPolarkoord}
\end{equation}

We first prove the upper bound.
By the boundedness of the symbol (cf. Theorem \ref{Satz:reelleSymbolEigenschaftenElementar}) there exists a constant  $C>0$ such that
$$ |q(x, \xi) | \leq C \left( 1+ \| \xi \|^2 \right) \quad \text{ for all } x, \xi \in \RR^d. $$
Hence, we derive for $\| \xi \| \leq 1$ by \eqref{Formel:DarstSymbolVerallPolarkoord} and the estimate for $\tau_x(\xi)$ in Lemma \ref{Lemma:AbschVerallgPolarkoordianten} (i):
$$ |q(x,\xi)| =\tau_x(\xi) | q(x, l_x(\xi))|  \leq \tau_x(\xi) C \left( 1 + \| l_x(\xi) \|^2 \right) = 2 C \tau_x(\xi)  \leq C_2 \| \xi \|^{1/\Lambda(x)}.$$
The case $\| \xi \| \geq 1$ follows analogously by Lemma \ref{Lemma:AbschVerallgPolarkoordianten} (ii).
For the lower bound we make use of the fact that the symbol $q(\cdot, \cdot)$ is continuous and positive on the compact set $K \times \SD$
Hence, there exists a constant $C>0$ such that for all $x \in K$ and $l_x(\xi) \in \SD$:
$$|q(x, l_x(\xi)| \geq C >0.$$
By the representation \eqref{Formel:DarstSymbolVerallPolarkoord} for the symbol and the lower bound for the growth of the radial component $\tau_x(\xi)$ (cf. Lemma \ref{Lemma:AbschVerallgPolarkoordianten}) the claim follows. 
\end{proof}

Using the boundedness assumptions (E3)  and (E4) of Definition \ref{Def:Exponenten}  we get weaker bounds having the advantage that they do not depend on $x$.

\begin{corollary} \label{Koro:AbschSymbolReell}
Let $q(x,\xi)$ be the symbol of an OSL process. 
Let $K$ be a compact set in $ \RR^d$. 
\begin{itemize}
\item[(i)] There exists a constant $C_1$ such that for all $x, \xi \in \RR^d$:
\begin{equation}
|q(x, \xi) | \leq   C_1 \| \xi \|^{1/a}  \text{ for } \| \xi \| \geq 1,
\end{equation}
and if in addition (E4) holds, there exists a constant $C_2$ s.t. for all $x, \xi \in \RR^d$:
\begin{equation}
|q(x, \xi) | \leq   C_2 \| \xi \|^{1/b}  \text{ for } \| \xi \| \leq 1.
\end{equation}
\item[(ii)]  There exists a constant $C_3$ s.t. for all $x \in K$ and all  $ \xi \in \RR^d$: 
\begin{equation}
|q(x, \xi) | \geq  C_3 \| \xi \|^{1/a}  \text{ for } \| \xi \| \leq 1,
\end{equation}
and if in addition (E4) holds, there exists a constant $C_4$ s.t. for all $x \in K$ and all  $ \xi \in \RR^d$: 
\begin{equation}
|q(x, \xi) | \geq  C_4 \| \xi \|^{1/b}  \text{ for } \| \xi \| \geq 1.
\end{equation}
\end{itemize}
\end{corollary}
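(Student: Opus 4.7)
The plan is to deduce the corollary directly from Theorem \ref{Satz:obereAbschSymbolReell} by replacing the state-dependent exponents $\lambda(x)$ and $\Lambda(x)$ appearing in the bounds with the uniform constants $a$ and $b$ supplied by assumptions (E3) and (E4). The key observation is that the function $r \mapsto r^{\gamma}$ is monotone in $\gamma$, with a different direction of monotonicity depending on whether $r \leq 1$ or $r \geq 1$. So in each of the four cases one only has to check that the inequalities on the exponents point in the correct direction relative to the range of $\|\xi\|$ under consideration.

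Concretely, I would handle the upper bounds first. For $\|\xi\| \geq 1$, Theorem \ref{Satz:obereAbschSymbolReell}(i) gives $|q(x,\xi)| \leq C \|\xi\|^{1/\lambda(x)}$; since $\lambda(x) \geq a$ by (E3), we have $1/\lambda(x) \leq 1/a$, and because $\|\xi\| \geq 1$, raising to a larger exponent only makes the right-hand side larger, yielding $|q(x,\xi)| \leq C \|\xi\|^{1/a}$. For $\|\xi\| \leq 1$ under (E4), Theorem \ref{Satz:obereAbschSymbolReell}(i) gives $|q(x,\xi)| \leq C \|\xi\|^{1/\Lambda(x)}$, and (E4) provides $\Lambda(x) \leq b$, so $1/\Lambda(x) \geq 1/b$; since $\|\xi\| \leq 1$, raising to a smaller exponent makes the right-hand side larger, giving $|q(x,\xi)| \leq C \|\xi\|^{1/b}$.

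The lower bounds in (ii) are obtained by the mirror argument. For $x \in K$ and $\|\xi\| \leq 1$, Theorem \ref{Satz:obereAbschSymbolReell}(ii) yields $|q(x,\xi)| \geq C \|\xi\|^{1/\lambda(x)}$; using $1/\lambda(x) \leq 1/a$ from (E3) together with $\|\xi\| \leq 1$, raising to a smaller exponent makes the right-hand side smaller, so $|q(x,\xi)| \geq C \|\xi\|^{1/a}$. Finally, for $x \in K$ and $\|\xi\| \geq 1$ under (E4), Theorem \ref{Satz:obereAbschSymbolReell}(ii) yields $|q(x,\xi)| \geq C \|\xi\|^{1/\Lambda(x)}$, and since $1/\Lambda(x) \geq 1/b$ by (E4), for $\|\xi\| \geq 1$ raising to a larger exponent makes the right-hand side smaller, so $|q(x,\xi)| \geq C \|\xi\|^{1/b}$.

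There is no real obstacle here; the argument is a bookkeeping step, and the only thing to watch out for is keeping straight, for each of the four cases, which direction the monotonicity of $\gamma \mapsto r^{\gamma}$ acts in and whether one wants to upper- or lower-bound the exponent. The constants $C_1,\ldots,C_4$ can simply be taken to be the constants produced by Theorem \ref{Satz:obereAbschSymbolReell} (with $C_3,C_4$ depending on the compact set $K$ through the compactness argument in that theorem's proof).
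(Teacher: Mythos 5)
Your proposal is correct, and it is exactly the intended argument: the paper gives no explicit proof for this corollary (it merely states that the weaker, $x$-independent bounds follow from Theorem \ref{Satz:obereAbschSymbolReell} together with (E3) and (E4)), and the four monotonicity checks you carry out are precisely the implicit bookkeeping step.
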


\begin{remark}
Let us mention that, \change{by \eqref{Formel:DarstSymbolVerallPolarkoord}}, the symbol is because of
\[
  q(x,\xi) = \tau_x( \xi) q(x, l_x(\xi)), \quad x \in \RR^d \text{ and } \xi \in \Gamma,
\]  
entirely determined by its values on $\RR^d \times \left( \SD \cup\{ 0\} \right)$, where $0$ denotes the zero vector on $ \RR^d$.
\end{remark}

\section{Properties of Operator-stable-like Processes}

In the present section we analyze path properties and moment estimates of the new class of processes introduced in Section 2, that is It\^o processes having symbol
\[
  q(x, \xi)=\int_{\Gamma} \left( 1- \cos( \langle y, \xi \rangle ) \right) \phi_x(dy) , \quad x \in \RR^d,
\]
where $ \phi_x( \cdot)$ is a symmetric operator-stable-like L\'evy-meausre. 
As before, $E(x)$ is symmetric, Lipschitz continuous and for the real parts of the  eigenvalues we have: 

\begin{align} \tag{E3}
  \frac{1}{2}< a \leq \lambda(x) \quad \text{ for all } x \in \RR^d
\end{align}
with constant $a > 1/2$. Sometimes we will have to demand in addition
\begin{align} \tag{E4}
   \Lambda(x) \leq b < \infty  \quad \text{ for all } x \in \RR^d
\end{align}
with constant $b \in \bbr_+$. 

Whenever possible, we will work in the more general and hence more flexible framework only demanding (E1) - (E3).  
Subsequently, we consider maximal inequalities, existence of moments, limits at zero and infinity as well as $p$-variation. 
Compare in this context Chapter 5 of \cite{LevyLike}.


\subsection{Maximal inequalities}

For every $x \in \RR^d$ and $R>0$ we denote the first exit time $T$ of the process $X=(X_t)_{t \geq 0}$ from the ball $ \overline{B}_R(x)$ by
$T :=T_R^x  := \inf \lbrace  t>0 : \| X_t-x \| >R \rbrace$. This exit time is closely related to the the supremum of the norm of the process via
\begin{equation}
\lbrace T < t \rbrace \subset \left\lbrace \sup\limits_{s \leq t} \| X_s-x \|>R \right\rbrace \subset \lbrace T \leq t \rbrace \subset \left\lbrace \sup\limits_{s \leq t} \| X_s-x \| \geq R \right\rbrace
\label{Formel:BezErstaustrittszeitMaxProz}
\end{equation}

We will make use of the following general result (c.f. \cite{Schnurr}, Proposition 3.10)
, which we recall for the readers' convenience. 

\begin{theorem}
Let $X$ be an It\^o process with continuous differential characteristics; in case of the measure $N$ this is meant in the sense that the function  $n:=\int_{y\neq 0} (1\wedge \norm{y}^2) \ N(\cdot,dy)$ is continuous. 

Then, there exists a constant  $C>0$ (only depending on the dimension) such that
\begin{equation}
\mathbb{P}^x \left( \sup\limits_{s \leq t}\|X_s-x\| > R \right)  \leq \mathbb{P}^x(T \leq t) \leq Ct \sup\limits_{\|y-x\| \leq R} \sup\limits_{\|\xi\| \leq 1/R} |q(y,\xi)| 
\label{Formel:Satz:FellerProzessErstaustrittszeit}
\end{equation}
for all $x \in \RR^d$ and $R,t>0$.
\label{Satz:FellerProzessErstaustrittszeit}
\end{theorem}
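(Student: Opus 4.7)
The first inequality is the left inclusion in \eqref{Formel:BezErstaustrittszeitMaxProz} and requires no argument, so the task reduces to the upper bound on $\mathbb{P}^x(T\leq t)$. The plan is a two-step Fourier argument: (i) turn the exit-time probability, via a stopped Dynkin-type identity, into an estimate on $\mathbb{E}^x[1-\cos\langle\xi,X_{t\wedge T}-x\rangle]$ for test frequencies $\xi\in B_{1/R}(0)$, and (ii) recover the exit probability from that oscillation estimate by a Pruitt-type averaging inequality in $\xi$.

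For step (i), I fix $\xi$ and apply It\^o's formula to $u_\xi(y):=e^{i\langle\xi,y-x\rangle}$ along the stopped semimartingale $X^T$. Reading the drift off the differential characteristics $(\ell,Q,N)$, exactly as in the identification of the symbol (cf.\ \eqref{stoppedsymbol} and Theorem 5.7 of \cite{Alex}), one obtains that
\[
e^{i\langle\xi,X_{t\wedge T}-x\rangle}-1+\int_0^{t\wedge T}q(X_{s-},\xi)\,e^{i\langle\xi,X_{s-}-x\rangle}\,ds
\]
is a local martingale. The continuity of $\ell$, $Q$ and $n(\cdot)=\int(1\wedge\|y\|^2)N(\cdot,dy)$ is used precisely here: on the compact $\overline{B}_R(x)$ it yields a uniform bound for $|q(\cdot,\xi)|$, so the integrand above is bounded and the local martingale is in fact a true martingale. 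Taking $\mathbb{E}^x$, real parts, and using $|u_\xi|\equiv 1$ gives
\[
\mathbb{E}^x[1-\cos\langle\xi,X_{t\wedge T}-x\rangle]\;\leq\;t\sup_{\|y-x\|\leq R}|q(y,\xi)|.
\]

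For step (ii), I integrate this inequality in $\xi$ over $B_{1/R}(0)$. The classical Pruitt estimate
\[
\int_{\|\xi\|\leq r}\bigl(1-\cos\langle\xi,z\rangle\bigr)\,d\xi\;\geq\;c_d\,r^d\bigl(1\wedge(r\|z\|)^2\bigr),
\]
applied with $r=1/R$ and combined with Fubini, bounds the left-hand side from below by $c_d R^{-d}\,\mathbb{E}^x\bigl[1\wedge\|X_{t\wedge T}-x\|^2/R^2\bigr]\geq c_d R^{-d}\mathbb{P}^x(T\leq t)$, since $\|X_T-x\|\geq R$ on $\{T\leq t\}$. The right-hand side is bounded from above by $\omega_d R^{-d}\,t\sup_{\|y-x\|\leq R,\,\|\xi\|\leq 1/R}|q(y,\xi)|$, where $\omega_d$ denotes the volume of the unit ball. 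Dividing by $c_d R^{-d}$ yields the asserted inequality with $C=\omega_d/c_d$, a constant depending only on the dimension.

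The main obstacle is the martingale identity in step (i): $u_\xi$ is not compactly supported, so one cannot invoke the extended generator (Definition \ref{def:extgen}) directly. The way around this is to write out the complex-valued It\^o formula for $u_\xi(X)$ explicitly in terms of $(\ell,Q,N)$, recognise the drift as $-q(\cdot,\xi)u_\xi$, and exploit the continuity of the characteristics to upgrade the local martingale stopped at $T$ to a true martingale. Once this identity is secured, the remainder---the Pruitt averaging and the set-inclusion step---is routine.
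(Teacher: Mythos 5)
Correct in substance, but note first that the paper does not prove this statement at all---it is quoted verbatim from \cite{Schnurr}, Proposition~3.10, so there is no in-paper proof to compare against. Your blind proof is sound and is a genuine alternative to the argument one finds in that reference. Schnurr (and Schilling before him, in the Feller setting) fixes a smooth cutoff $u\in C_c^\infty(\RR^d)$ with $1_{B_{1/2}(0)}\leq u\leq 1_{B_1(0)}$, applies Dynkin's formula to $u((\cdot-x)/R)$ stopped at $T$, and then controls $|Au((\cdot-x)/R)|$ by Fourier inversion together with the subadditivity of $\xi\mapsto\sqrt{|q(y,\xi)|}$; the dimensional constant there is essentially $\int(1+\|\eta\|^2)|\hat{u}(\eta)|\,d\eta$ for the once-chosen bump. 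You instead run It\^o's formula on the exponentials $e^{i\langle\xi,\cdot\rangle}$, upgrade the stopped local martingale to a true martingale by the boundedness you correctly identify (compact support is indeed unnecessary; boundedness of the stopped process and of $q(\cdot,\xi)$ on $\overline{B}_R(x)$ suffices, and the latter is exactly where the continuity of $\ell$, $Q$, $n$ enters), take real parts, and then integrate the resulting oscillation bound over $\xi\in B_{1/R}(0)$ using the elementary Pruitt inequality. This bypasses both the choice of cutoff and the Peetre/subadditivity step, and makes the dimension-only nature of the constant $C=\omega_d/c_d$ completely transparent. One small bookkeeping slip: the first inequality of the theorem is the \emph{middle} inclusion of the chain \eqref{Formel:BezErstaustrittszeitMaxProz}, namely $\{\sup_{s\leq t}\|X_s-x\|>R\}\subset\{T\leq t\}$, not the left one.
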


Analogously to the proof of  Corollary 5.3 in \cite{LevyLike} we can derive the following:

\begin{corollary} \label{Koro:UntereGrenzeEWErsteintrittszeit}
Let $X$ be an It\^o process with continuous differential characteristics as above. Then:
\begin{equation}
  \EE^x (T) \geq \frac{C}{\sup\limits_{\|y-x\| \leq R} \sup\limits_{\|\xi\| \leq 1/R} |q(y,\xi)|} \quad \text{ for all } x \in \RR^d \text{ and } R>0
\end{equation}
with constant $C>0$ of \eqref{Formel:Satz:FellerProzessErstaustrittszeit}.
\end{corollary}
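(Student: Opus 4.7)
The plan is to derive the lower bound on $\EE^x(T)$ directly from the tail estimate in Theorem \ref{Satz:FellerProzessErstaustrittszeit} via the layer-cake formula $\EE^x(T)=\int_0^\infty \PP^x(T>t)\, dt$, combined with the elementary inequality $\PP^x(T>t) \geq 1 - \PP^x(T\leq t)$. The key idea is that the tail bound is only nontrivially useful (i.e.\ gives a probability strictly less than $1$) for sufficiently small $t$; integrating over precisely this range of $t$ recovers a lower bound of the correct order.

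First I would abbreviate $M := \sup_{\|y-x\|\leq R}\sup_{\|\xi\|\leq 1/R}|q(y,\xi)|$ and observe that the case $M=0$ is trivial (the process essentially does not leave the ball, so $\EE^x(T) = \infty$), while for $M > 0$ Theorem \ref{Satz:FellerProzessErstaustrittszeit} gives
\begin{equation*}
  \PP^x(T\leq t) \leq C t M \quad \text{for all } t\geq 0,
\end{equation*}
with the same constant $C>0$ as in \eqref{Formel:Satz:FellerProzessErstaustrittszeit}. Hence $\PP^x(T>t) \geq 1-CtM$, and this right-hand side is nonnegative precisely when $t \leq 1/(CM)$.

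Then I would truncate the layer-cake integral at this critical time:
\begin{align*}
  \EE^x(T) \;=\; \int_0^\infty \PP^x(T>t)\, dt
  \;\geq\; \int_0^{1/(CM)} \bigl(1-CtM\bigr)\, dt
  \;=\; \frac{1}{CM} - \frac{CM}{2}\cdot\frac{1}{(CM)^2}
  \;=\; \frac{1}{2CM}.
\end{align*}
Setting $\widetilde C := 1/(2C)$ then gives $\EE^x(T)\geq \widetilde C/M$, which is the claimed bound.

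I do not anticipate any real obstacle here: the only subtle points are checking nonnegativity of the integrand on the chosen interval (which is exactly how the cutoff is selected) and handling the degenerate case $M=0$. Everything substantive has already been done in Theorem \ref{Satz:FellerProzessErstaustrittszeit}; the corollary is just the standard conversion of a linear-in-$t$ tail estimate into a mean lower bound.
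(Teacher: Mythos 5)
Your argument is correct and is exactly the standard deduction that the paper invokes by pointing to Corollary~5.3 in \cite{LevyLike}: apply Theorem~\ref{Satz:FellerProzessErstaustrittszeit}, pass to $\PP^x(T>t)\geq 1-CtM$, and either integrate by the layer-cake formula over $[0,1/(CM)]$ (your route, yielding $1/(2CM)$) or use the one-point Markov bound $\EE^x(T)\geq t\,\PP^x(T>t)$ at $t=1/(2CM)$ (yielding $1/(4CM)$); both are minor variants of the same idea, and you also handle the degenerate case $M=0$ correctly. The only thing worth flagging is that the constant you obtain is $1/(2C)$, not literally the $C$ of \eqref{Formel:Satz:FellerProzessErstaustrittszeit} — the paper's phrase ``with constant $C>0$ of \eqref{Formel:Satz:FellerProzessErstaustrittszeit}'' should be read as ``a constant determined by the $C$ of \eqref{Formel:Satz:FellerProzessErstaustrittszeit},'' which is what your proof delivers.
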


For the case of OSL processes we obtain: 

\begin{corollary}\label{Koro:MaximalabschReellesSymbol}
Let $X$ be an OSL process. Then, there exists $C>0$, such that
\begin{enumerate}
\item[(i)] for all $x \in \RR^d$, $t>0$ and $0 < R \leq 1$:
\begin{align*}
  \mathbb{P}^x \left( \sup\limits_{s \leq t}\|X_s-x\| > R \right) & \leq   C t R^{-1/a}.
\end{align*}
\item[(ii)] If (E4) is satisfied in addition, then there exists $C>0$, such that for all $x \in \RR^d$, $t>0$ and $R \geq 1$:
\begin{align*}
  \mathbb{P}^x \left( \sup\limits_{s \leq t}\|X_s-x\| > R \right) & \leq  C t R^{-1/b}.
\end{align*}
\end{enumerate}
The same upper bound holds for  $ \PP^x \left( T \leq  t \right) $.
\end{corollary}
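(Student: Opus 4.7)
The plan is to apply Theorem \ref{Satz:FellerProzessErstaustrittszeit} and then plug in the symbol estimates from Corollary \ref{Koro:AbschSymbolReell} (supplemented, in part (i), by the boundedness statement (ii) of Theorem \ref{Satz:reelleSymbolEigenschaftenElementar}). Both inequalities in the statement will follow from a single uniform bound on the quantity $\Phi(x,R) := \sup_{\|y-x\| \leq R} \sup_{\|\xi\| \leq 1/R} |q(y,\xi)|$, since Theorem \ref{Satz:FellerProzessErstaustrittszeit} bounds both $\PP^x(\sup_{s\leq t}\|X_s-x\|>R)$ and $\PP^x(T\leq t)$ by $Ct\,\Phi(x,R)$.

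First I would verify the hypothesis of Theorem \ref{Satz:FellerProzessErstaustrittszeit}, namely continuity of the differential characteristics. The drift and diffusion of an OSL process vanish identically, so only the map $y \mapsto n(y) := \int_{\Gamma} (1 \wedge \|z\|^2)\,\phi_y(dz)$ needs to be shown continuous. Using the polar representation of $\phi_y$, Lipschitz continuity of $E$ from (E2), and the uniform bound $\|r^{E(y)}\| \leq C r^{a}$ from Proposition \ref{Prop:AbschMEExponent} (with $a > 1/2$), the continuity follows by dominated convergence. I would mention this briefly and then appeal to Theorem \ref{Satz:FellerProzessErstaustrittszeit}.

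For part (i), fix $0 < R \leq 1$, so that $1/R \geq 1$. I would split the inner supremum over $\{\|\xi\| \leq 1\}$ and $\{1 \leq \|\xi\| \leq 1/R\}$. On the latter set, Corollary \ref{Koro:AbschSymbolReell}(i) (first inequality, which does \emph{not} require (E4)) gives $|q(y,\xi)| \leq C_1\|\xi\|^{1/a} \leq C_1 R^{-1/a}$ uniformly in $y$. On the former, Theorem \ref{Satz:reelleSymbolEigenschaftenElementar}(ii) yields $|q(y,\xi)| \leq 2C$, which is absorbed into $R^{-1/a}$ since $R \leq 1$ forces $R^{-1/a} \geq 1$. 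Thus $\Phi(x,R) \leq C' R^{-1/a}$, and Theorem \ref{Satz:FellerProzessErstaustrittszeit} gives the bound for both the running supremum and the exit time $T$.

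For part (ii), I assume (E4) and take $R \geq 1$, so $1/R \leq 1$. Now the inner supremum runs only over $\|\xi\| \leq 1/R \leq 1$, and Corollary \ref{Koro:AbschSymbolReell}(i) (second inequality, which uses (E4)) directly yields $|q(y,\xi)| \leq C_2 \|\xi\|^{1/b} \leq C_2 R^{-1/b}$; no splitting is necessary. I do not expect any real obstacle: the ingredients are all in place. The only mild subtlety is the range-splitting in (i), since Corollary \ref{Koro:AbschSymbolReell}(i) controls the symbol only for $\|\xi\| \geq 1$, but this is trivially patched by the general boundedness of $q$ on bounded $\xi$-sets combined with the fact that $R^{-1/a} \geq 1$ when $R \leq 1$.
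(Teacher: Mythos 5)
Your proof is correct and follows the same route as the paper: apply the maximal estimate of Theorem \ref{Satz:FellerProzessErstaustrittszeit} and plug in the symbol bounds of Corollary \ref{Koro:AbschSymbolReell}. In fact you are slightly more careful than the paper's own argument on the region $\|\xi\|\leq 1$ in part (i), where Corollary \ref{Koro:AbschSymbolReell} gives no bound without (E4); you correctly patch this with the global bound $|q(y,\xi)|\leq C(1+\|\xi\|^2)$ from Theorem \ref{Satz:reelleSymbolEigenschaftenElementar}(ii) and absorb it into $R^{-1/a}\geq 1$.
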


\begin{proof}
We will make use of the upper bound in Corollary \ref{Koro:AbschSymbolReell}, that it, there exist $C_1,C_2>0$ such that
\[ |q(x, \xi) | \leq 
  \begin{cases}
    C_1 \| \xi \|^{1/a} & \text{ for } \| \xi \| \geq 1
 \\    C_2 \| \xi \|^{1/b} & \text{ for } \| \xi \| \leq 1 
   \end{cases}
\]
for all $x \in \RR^d$. Set $ \tilde{C} :=  \max \lbrace C_1,C_2 \rbrace$. Here, and in the remainder of the proof, we only consider $b$ in the case where (E4) is satisfied. We obtain for all $x \in \RR^d$ and $R>0$:
\begin{align*}
  \sup\limits_{\| x-y \| \leq R} \sup\limits_{\| \xi \| \leq 1/R } | q(y, \xi) | \leq \sup\limits_{\| \xi \| \leq 1/R }
  \begin{cases}
    \tilde{C} \| \xi \|^{1/a} & \text{ for } \| \xi \| \geq 1\\
    \tilde{C} \| \xi \|^{1/b} & \text{ for } \| \xi \| \leq 1.
  \end{cases}
  \tag{$\ast$}
  \label{Stern:FellerMaxiumsprozess}
\end{align*}
Let us now consider $R \geq 1$ and hence the supremum over $\| \xi \| \leq 1/R \leq 1$. Then
$$ \eqref{Stern:FellerMaxiumsprozess} =  \sup\limits_{\| \xi \| \leq 1/R } \tilde{C} \| \xi \|^{1/ b} = \tilde{C} R^{-1/b}.$$
\\
for $ 0 <R \leq 1$ we obtain
$$ \eqref{Stern:FellerMaxiumsprozess} =  \sup\limits_{\| \xi \| \leq 1/R } \tilde{C} \| \xi \|^{1/ a}
= \tilde{C} R ^{-1 / a}.$$
The claim follows by Theorem \ref{Satz:FellerProzessErstaustrittszeit}.
\end{proof}

Combining the previous three results, we obtain for the first exit time: 
\begin{corollary}
Let $X$ be an OSL process. Then, there is a constant $C>0$, such that for all
 $x \in \RR^d$:
\begin{equation}
  \EE^x (T)
  \geq 
  \begin{cases}   
    C R^{1/a} & \text{ for } R \leq 1\\ 
    C R^{1/b} & \text{ for } R  \geq 1
  \end{cases}
\end{equation}
where the second inequality only holds, if (E4) is satisfied. 
\end{corollary}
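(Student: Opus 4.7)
The plan is to combine Corollary \ref{Koro:UntereGrenzeEWErsteintrittszeit} with the uniform symbol bounds of Corollary \ref{Koro:AbschSymbolReell}, exactly as in the proof of Corollary \ref{Koro:MaximalabschReellesSymbol}, just reading the chain of inequalities from the other end. Since an OSL process has continuous differential characteristics (indeed the only non-trivial one, $\phi_x$, depends continuously on $x$ via the Lipschitz map $E(x)$), the hypotheses of Corollary \ref{Koro:UntereGrenzeEWErsteintrittszeit} are satisfied and we obtain
\[
  \EE^x(T) \;\geq\; \frac{C}{\sup_{\|y-x\|\leq R}\sup_{\|\xi\|\leq 1/R} |q(y,\xi)|}
\]
for every $x \in \RR^d$ and $R>0$.

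First I would dispatch the case $R \leq 1$. Then $1/R \geq 1$, so any $\xi$ with $\|\xi\| \leq 1/R$ either lies in the region $\|\xi\| \leq 1$ (where $\|\xi\|^{1/a} \leq 1 \leq R^{-1/a}$ since $a>1/2$ and $R\leq 1$) or in the region $1 \leq \|\xi\| \leq 1/R$, where the uniform upper bound from Corollary \ref{Koro:AbschSymbolReell}(i) yields $|q(y,\xi)| \leq C_1 \|\xi\|^{1/a} \leq C_1 R^{-1/a}$. Combining these and noting the bound is $x$-independent,
\[
 \sup_{\|y-x\|\leq R}\sup_{\|\xi\|\leq 1/R} |q(y,\xi)| \;\leq\; \tilde{C}\, R^{-1/a},
\]
and inversion gives $\EE^x(T) \geq C R^{1/a}$.

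For $R \geq 1$ under the additional assumption (E4) the argument is symmetric: now $1/R \leq 1$, so for every $\xi$ with $\|\xi\|\leq 1/R$ the second bound of Corollary \ref{Koro:AbschSymbolReell}(i) applies and gives $|q(y,\xi)| \leq C_2 \|\xi\|^{1/b} \leq C_2 R^{-1/b}$ uniformly in $y$. Inverting in Corollary \ref{Koro:UntereGrenzeEWErsteintrittszeit} yields $\EE^x(T) \geq C R^{1/b}$. Finally, I would take the minimum of the two constants produced above so that a single $C>0$ works in both cases of the statement.

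There is no serious obstacle: everything is already prepared by Corollary \ref{Koro:AbschSymbolReell} (which gives the \emph{uniform-in-}$x$ upper bounds) and Corollary \ref{Koro:UntereGrenzeEWErsteintrittszeit} (which turns a symbol upper bound into an expected-exit-time lower bound). The only point worth double-checking is the continuity hypothesis in Theorem \ref{Satz:FellerProzessErstaustrittszeit} / Corollary \ref{Koro:UntereGrenzeEWErsteintrittszeit}, but this follows from the continuous (even Lipschitz) dependence of $E(x)$ on $x$ through $\phi_x$, since the integrand $1\wedge \|r^{E(x)}\theta\|^2$ is dominated, by Proposition \ref{Prop:AbschMEExponent} and Lemma \ref{Lem:AbschaetzungDiffExponenten}, by an integrable function independent of $x$ on a neighbourhood, and dominated convergence applies.
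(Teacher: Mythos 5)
Your argument is correct and is exactly the paper's intended route: the paper's ``proof'' of this corollary is just the phrase ``combining the previous three results,'' i.e.\ Corollary \ref{Koro:UntereGrenzeEWErsteintrittszeit} together with the symbol estimate already established in the proof of Corollary \ref{Koro:MaximalabschReellesSymbol} via Corollary \ref{Koro:AbschSymbolReell}, and you reproduce precisely that chain, correctly noting which branch needs (E4). One step is left implicit: in the case $R\le 1$ and $\|\xi\|\le 1$ you observe $\|\xi\|^{1/a}\le 1\le R^{-1/a}$ but never actually specify which bound on $|q(y,\xi)|$ you are applying there --- Corollary \ref{Koro:AbschSymbolReell}(i) is stated only for $\|\xi\|\ge 1$ and its $\|\xi\|^{1/b}$ counterpart requires (E4) --- so the clean reference is the uniform boundedness $|q(y,\xi)|\le C\left(1+\|\xi\|^2\right)\le 2C\le 2C\,R^{-1/a}$ from Theorem \ref{Satz:reelleSymbolEigenschaftenElementar}(ii), which closes the gap without affecting the conclusion.
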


The following counterpart of Theorem \ref{Satz:FellerProzessErstaustrittszeit} yields an upper bound for the tail probabilities of the first exit time. 
In order to make use of the following finer result, we demand $(E4)$ and work in the Feller framework. 
Let us recall first  Theorem 5.5 of \cite{LevyLike}.

\begin{theorem} \label{Satz:WAbschaetzungNachUnten}
Let $X$ be a Feller process with generator $(A,D(A))$, symbol $q(x,\xi)$ and $C_c^{\infty}(\mathbb{R}^d) \subset D(A)$. 
Then for all $x \in \mathbb{R}^d$ and $ R,t >0$:
\begin{equation}
  \mathbb{P}^x \left( T  \geq t\right) \leq C \left( t \sup\limits_{\|\xi \| \leq 1/(Rk(x,R))} \ \inf\limits_{\| y-x\| \leq R} \mathrm{Re} \ q(y,\xi) \right)^{-1} 
\end{equation}
as well as
\begin{equation}
  \mathbb{P}^x \left( \sup\limits_{s \leq t} \|X_s-x\|   \leq R \right) \leq C \left( t \sup\limits_{\|\xi \| \leq 1/(Rk(x,R))} \ \inf\limits_{\| y-x\| \leq R} \mathrm{Re} \ q(y,\xi) \right)^{-1} 
\end{equation}
with constant $C>0$ and
\begin{equation} \label{SatzPfadEigenschaft2Definitionk(x,r)}
  k(x,R):= \inf \left\lbrace k \geq \left( \arccos \sqrt{2/3}\right)^{-1} : \inf\limits_{\|\xi\| \leq 1/(kR) }\inf\limits_{\|y-x\| \leq R} \frac{\mathrm{Re}\ q(y,\xi)}{\|\xi\| \ |  \mathrm{Im} \ q(x,\xi)|} \geq 2R \right\rbrace, 
\end{equation}
if $\mathrm{Im} \ q(x,\xi) \not\equiv 0$, resp. $k(x,R)= \left( \arccos \sqrt{2/3}\right)^{-1}$, if $\mathrm{Im} \ q(x,\xi) \equiv 0$.
\end{theorem}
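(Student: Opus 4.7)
The plan is to bound the expected sojourn time $\EE^x[t \wedge T]$ from above via Dynkin's formula applied to a carefully chosen family of test functions, and then convert this into a tail bound on $\{T \geq t\}$ using the elementary estimate $\EE^x[t \wedge T] \geq t\,\PP^x(T \geq t)$.

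Fix $\xi \in \RR^d$ with $\|\xi\| \leq 1/(Rk(x,R))$. I would construct a real-valued test function $u_\xi \in C_c^\infty(\RR^d)$ modeled on the real part of a plane wave, for instance
\[
u_\xi(y) := \bigl(1 - \cos\langle \xi, y - x\rangle\bigr)\,\chi\!\left(\tfrac{y-x}{R}\right),
\]
where $\chi \in C_c^\infty(\RR^d)$ is a fixed cutoff with $\chi \equiv 1$ on $B_1(0)$ and $\supp \chi \subset B_2(0)$. Then $u_\xi(x) = 0$ and $0 \leq u_\xi \leq 2$ uniformly in $\xi$. The central step is to show, using the pseudodifferential representation of $A$ in terms of the symbol $q$, that
\[
  Au_\xi(y) \geq c\,\operatorname{Re} q(y,\xi) \qquad \text{for all } y \in B_R(x),
\]
with a constant $c>0$ independent of $x,R,\xi$. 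The bound $\|\xi\| \leq 1/(Rk(x,R))$ and the very definition \eqref{SatzPfadEigenschaft2Definitionk(x,r)} of $k(x,R)$ are tailor-made to produce this estimate: after expanding $Au_\xi(y)$ against $q(y,\cdot)$, one picks up contributions of the form $\|\eta\|\,|\operatorname{Im} q(y,\eta)|$, which are dominated by $\operatorname{Re} q(y,\eta)$ precisely for frequencies on the scale $1/(Rk(x,R))$.

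Granted this lower bound, Dynkin's formula (valid since $C_c^\infty(\RR^d) \subset D(A)$) applied at the bounded stopping time $t \wedge T$, together with $u_\xi(x) = 0$, yields
\[
  2 \geq \EE^x\!\left[u_\xi(X_{t \wedge T})\right] = \EE^x \int_0^{t \wedge T} A u_\xi(X_s)\, ds \geq c \left(\inf_{\|y - x\| \leq R} \operatorname{Re} q(y,\xi)\right) \EE^x[t \wedge T].
\]
Rearranging and using $\EE^x[t \wedge T] \geq t\, \PP^x(T \geq t)$ gives the desired estimate on $\PP^x(T \geq t)$ for this particular $\xi$; taking the supremum over all admissible $\xi$ produces the first asserted inequality. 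The second inequality then follows at once from the containment $\{\sup_{s \leq t} \|X_s - x\| \leq R\} \subset \{T \geq t\}$ recorded in \eqref{Formel:BezErstaustrittszeitMaxProz}.

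The main obstacle is verifying the pointwise lower bound $A u_\xi(y) \geq c\,\operatorname{Re} q(y,\xi)$ on the ball $B_R(x)$. Two subtleties must be controlled: first, the perturbation caused by the smooth cutoff $\chi$, which is governed by the smallness of $\|\xi\|R$ (hence by the size of $k(x,R)$); and second, the contribution of $\operatorname{Im} q$, which would otherwise spoil the real-part estimate and is absorbed exactly through the condition encoded in \eqref{SatzPfadEigenschaft2Definitionk(x,r)}. Once this symbolic estimate is in place, the rest is a clean application of Dynkin's formula and Markov's inequality.
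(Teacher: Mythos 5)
Your overall strategy --- Dynkin's formula on a $(1-\cos)$-type test function, the elementary bound $\EE^x[t\wedge T]\geq t\,\PP^x(T\geq t)$, and a supremum over admissible frequencies --- is indeed the standard route to this estimate. However, the key pointwise claim $A u_\xi(y)\geq c\,\mathrm{Re}\,q(y,\xi)$ on $B_R(x)$ fails with your cutoff at scale $R$: the error caused by $\chi$ is \emph{not} governed by the smallness of $\|\xi\|R$, because $A$ is nonlocal. For $y\in B_R(x)$, where $\chi\equiv 1$ and $\nabla\chi\equiv 0$, one has
\[
A u_\xi(y)=\cos\langle\xi,y-x\rangle\,\mathrm{Re}\,q(y,\xi)-\sin\langle\xi,y-x\rangle\,\mathrm{Im}\,q(y,\xi)-\int\bigl(1-\cos\langle\xi,y+z-x\rangle\bigr)\Bigl(1-\chi\bigl(\tfrac{y+z-x}{R}\bigr)\Bigr)N(y,dz),
\]
and the last, nonnegative correction is supported on jumps of size $\|z\|\gtrsim R$ and need not be dominated by $\mathrm{Re}\,q(y,\xi)$. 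Concretely, in $d=1$ take $N(y,\cdot)=\delta_{z_0}+\delta_{-z_0}$ for some $z_0>3R$ (no drift, no diffusion); then $u_\xi(y\pm z_0)=0$ for all $y\in B_R(x)$, so $A u_\xi(y)=-2\bigl(1-\cos(\xi(y-x))\bigr)\leq 0$, while $\mathrm{Re}\,q(y,\xi)=2\bigl(1-\cos(\xi z_0)\bigr)>0$. The Dynkin step then yields nothing.

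The repair is to refrain from truncating at the scale $R$. Work instead with the untruncated $v(y)=1-\cos\langle\xi,y-x\rangle$, which is bounded, $C^\infty$ with bounded derivatives, and satisfies $\bigl|v(y+z)-v(y)-\langle z,\nabla v(y)\rangle 1_{\{\|z\|<1\}}\bigr|\lesssim 1\wedge\|z\|^2$; since $C_c^\infty(\RR^d)\subset D(A)$, the process $v(X_{t\wedge T})-\int_0^{t\wedge T}\bigl[\cos\langle\xi,X_s-x\rangle\,\mathrm{Re}\,q(X_s,\xi)-\sin\langle\xi,X_s-x\rangle\,\mathrm{Im}\,q(X_s,\xi)\bigr]\,ds$ is a martingale via the extended generator (equivalently the semimartingale characteristics). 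Alternatively, cut off at a free scale $n$ and let $n\to\infty$: the error vanishes by dominated convergence because $\int(1\wedge\|z\|^2)N(y,dz)<\infty$. Only then do $\|\xi\|\leq 1/(Rk(x,R))$ and the definition \eqref{SatzPfadEigenschaft2Definitionk(x,r)} give $\cos\langle\xi,X_s-x\rangle\geq\sqrt{2/3}$ and $|\sin\langle\xi,X_s-x\rangle\,\mathrm{Im}\,q(X_s,\xi)|\leq\tfrac12\,\mathrm{Re}\,q(X_s,\xi)$ for $s<T$, after which the rest of your argument goes through.
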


\begin{remark}
It can be shown that, if the sector condition \eqref{sector} is satisfied, the theorem above can be simplified. In this case, the constant $k(x,R)$ is bounded as follows: 
\[
  k(x,R) \leq \max \left\lbrace 2 C, \left( \arccos \sqrt{2/3}\right)^{-1} \right\rbrace.
\]
\end{remark}

In addition one can derive an upper bound for the first exit time $T$ (c.f. Corollary 5.8 of \cite{LevyLike}).

\begin{corollary}
Under the assumptions of Theorem \ref{Satz:WAbschaetzungNachUnten} we have for $x \in \RR^d$ and $R>0$
$$ \EE^x(T) \leq C \left( \sup\limits_{\|\xi\| \leq 1/(Rk(x,R))} \inf\limits_{\|y-x\| \leq R} \mathrm{Re} \ q(y,\xi) \right)^{-1} $$
with constant $C>0$ and $k(x,R)$ of the previous theorem.
\end{corollary}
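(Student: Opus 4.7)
The plan is to convert the tail estimate of Theorem \ref{Satz:WAbschaetzungNachUnten} into a bound for $\EE^x(T)$ by integrating the tail. Since the raw bound $\PP^x(T\geq t) \leq C_0/(tS)$, with $S := \sup_{\|\xi\| \leq 1/(Rk(x,R))} \inf_{\|y-x\| \leq R} \mathrm{Re}\, q(y,\xi)$, is not integrable at infinity in $t$, I would first amplify it to an exponential tail bound via the strong Markov property and only then integrate.

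First I would pick $t_0 := 2C_0/S$, so that Theorem \ref{Satz:WAbschaetzungNachUnten} already yields $\PP^x(T\geq t_0) \leq 1/2$. To iterate, I would apply the strong Markov property at the deterministic times $n t_0$: on $\{T > n t_0\}$ the position $y:=X_{nt_0}$ lies in $\overline{B_R(x)}$, and the inclusion $B_R(x) \subset B_{2R}(y)$ gives $T_R^x \leq T_{2R}^y$ when starting at $y$. Theorem \ref{Satz:WAbschaetzungNachUnten} applied to the enlarged ball $B_{2R}(y)$ then supplies an analogous tail bound whose sup-inf constant is — by continuity of the symbol available in the Feller framework and by continuity of $k(\cdot,\cdot)$ — comparable up to a bounded factor to $S$. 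After possibly enlarging $t_0$ by a universal multiplicative constant, this yields $\PP^x(T > n t_0) \leq 2^{-n}$ for every $n\in\NN$.

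With this enhanced exponential tail in hand, the moment estimate is obtained by direct integration,
\[
\EE^x(T) \;=\; \int_0^\infty \PP^x(T>t)\, dt \;\leq\; \sum_{n=0}^\infty t_0\,\PP^x(T>nt_0) \;\leq\; t_0 \sum_{n=0}^\infty 2^{-n} \;=\; 2t_0 \;=\; \frac{4C_0}{S},
\]
which is the claimed bound with $C:=4C_0$. The principal obstacle is precisely the uniform Markov iteration step: Theorem \ref{Satz:WAbschaetzungNachUnten} controls the tail directly only from the fixed starting point $x$, whereas the iteration requires a uniform estimate of $\PP^y(T_R^x \geq t_0)$ for \emph{arbitrary} $y\in \overline{B_R(x)}$. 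Passing from the ball $B_R(x)$ to the slightly larger ball $B_{2R}(y)$ and verifying that the associated sup-inf quantity is comparable to $S$ — which is where continuity of $q$ and of $k(\cdot,\cdot)$, as well as the standing admissibility assumptions, are doing the real work — is the technically delicate point that must be checked carefully.
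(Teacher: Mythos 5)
Your strategy of iterating the tail bound via the strong Markov property runs into a genuine obstruction at exactly the point you flag as delicate. To make the chaining work you would need, uniformly for $y\in\overline{B}_R(x)$, a bound $\PP^y(\tilde T>t_0)\leq 1/2$ where $\tilde T$ is the exit time from the \emph{original} ball $\overline{B}_R(x)$, and with the \emph{same} $t_0=2C_0/S$ you chose. Your device of enlarging to $B_{2R}(y)$ and invoking Theorem~\ref{Satz:WAbschaetzungNachUnten} there produces the quantity
\[
S' := \sup_{\|\xi\|\leq 1/(2R\,k(y,2R))}\ \inf_{\|z-y\|\leq 2R}\ \mathrm{Re}\,q(z,\xi),
\]
and both modifications push $S'$ \emph{down}, not up: the infimum runs over a strictly larger ball $\overline{B}_{2R}(y)\supset\overline{B}_R(x)$, and the range of admissible $\xi$ shrinks (for a real symbol $k\equiv k_0$, so $1/(2Rk_0)<1/(Rk_0)$). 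Hence $S'\leq S$ in general, the bound $C_0/(t_0 S')$ can exceed $1/2$, and the claimed comparability is not a matter of continuity — it simply need not hold. There is no assumption in the paper (even with (E4)) that forces $\inf_{B_{2R}}q(\cdot,\xi)$ to be comparable, uniformly in $x$ and $R$, to $\inf_{B_R}q(\cdot,\xi)$; the constant would at best degrade with $R$ and $x$, whereas the corollary asserts a universal $C$.

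The paper does not prove this corollary itself but refers to Corollary~5.8 of \cite{LevyLike}; the argument there does not go through a tail estimate at all. One applies Dynkin's formula to the test function $u_\xi(y)=1-\cos\langle\xi,y-x\rangle$ stopped at $T\wedge t$, obtaining $\EE^x\bigl[\int_0^{T\wedge t}\mathrm{Re}\,q(X_s,\xi)\,ds\bigr]\leq C_0$; since $X_s\in\overline{B}_R(x)$ for $s<T$, the integrand is at least $\inf_{\|y-x\|\leq R}\mathrm{Re}\,q(y,\xi)$, which yields $\EE^x[T\wedge t]\leq C_0\bigl(\inf_{\|y-x\|\leq R}\mathrm{Re}\,q(y,\xi)\bigr)^{-1}$, and one then lets $t\to\infty$ and optimizes over $\|\xi\|\leq 1/(Rk(x,R))$. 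The tail bound of Theorem~\ref{Satz:WAbschaetzungNachUnten} is then just Markov's inequality applied to this expectation bound. In other words, the implication runs in the opposite direction from what your proof attempts: the expectation bound is primary and the tail bound is derived from it, which is why trying to integrate the tail bound back up cannot succeed without additional regularity that the statement does not assume.
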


The previous Theorem can be used in particular on real valued symbols satisfying locally the estimate
$| q(x,\xi)| \geq C \| \xi \|^{\alpha}$ with $\alpha \in (0,2)$ and a constant $C>0$, hence for OSL  processes satisfying (E4): 

\begin{corollary} \label{Koro:MaximalabschReellesSymbolUntere}
Let $X$ be an OSL process satisfying (E4). Then, there exist $C_1,C_2>0$, such that
\begin{enumerate}
  \item[(i)] for all $x \in \RR^d$, $t>0$ and $R \geq \frac{1}{k_0}$:
\begin{align*}
  \mathbb{P}^x \left( \sup\limits_{s \leq t}\|X_s-x\| \leq  R \right) & \leq  \frac{C_1}{t} R^{1 / a };
\end{align*}
  \item[(ii)]  for all $x \in \RR^d$, $t>0$ and $0 < R \leq \frac{1}{k_0}$:
\begin{align*}
  \mathbb{P}^x \left( \sup\limits_{s \leq t}\|X_s-x\| \leq R \right) & \leq   \frac{C_2}{t}   R^{1 / b},
\end{align*}
\end{enumerate}
where $k_0= \left( \arccos \sqrt{2/3}\right)^{-1}$.
The same estimates hold for $ \PP^x \left( T \geq  t \right) $.
\end{corollary}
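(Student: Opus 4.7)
The plan is to apply Theorem \ref{Satz:WAbschaetzungNachUnten} and substitute the lower symbol bounds from Corollary \ref{Koro:AbschSymbolReell}(ii). First I would observe that the symbol of an OSL process is real-valued by property (iv) of Theorem \ref{Satz:reelleSymbolEigenschaftenElementar}, so $\mathrm{Im}\, q(x,\xi)\equiv 0$ and the auxiliary quantity $k(x,R)$ collapses to the constant $k_0=(\arccos\sqrt{2/3})^{-1}$. Assumption (E4) makes $X$ a rich Feller process with $C_c^\infty(\RR^d)\subseteq D(A)$, so Theorem \ref{Satz:WAbschaetzungNachUnten} applies and yields
\[
\PP^x\Bigl(\sup_{s\leq t}\|X_s-x\|\leq R\Bigr) \leq \frac{C}{t \sup\limits_{\|\xi\|\leq 1/(Rk_0)} \inf\limits_{\|y-x\|\leq R} q(y,\xi)},
\]
together with the same estimate for $\PP^x(T\geq t)$.

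Next I would plug into the supremum the optimal test frequency $\|\xi\|=1/(Rk_0)$ and bound the remaining infimum via Corollary \ref{Koro:AbschSymbolReell}(ii), applied on the compact ball $K=\overline{B}_R(x)$. In case (i), $R\geq 1/k_0$ forces $\|\xi\|\leq 1$, so the infimum is at least $C_3\|\xi\|^{1/a}=C_3(Rk_0)^{-1/a}$; substituting into the displayed inequality and absorbing $k_0^{1/a}$ into the constant yields $\PP^x(\sup_{s\leq t}\|X_s-x\|\leq R) \leq C_1 R^{1/a}/t$. In case (ii), $0<R\leq 1/k_0$ forces $\|\xi\|\geq 1$, and the same mechanism combined with the second lower bound of Corollary \ref{Koro:AbschSymbolReell}(ii), namely $q(y,\xi)\geq C_4\|\xi\|^{1/b}$, produces $C_2 R^{1/b}/t$. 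The stated inequality for $\PP^x(T\geq t)$ is delivered by the second half of Theorem \ref{Satz:WAbschaetzungNachUnten} with no additional work.

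The chief technical hurdle is the uniformity of the constants in $x$: Corollary \ref{Koro:AbschSymbolReell}(ii) is formulated for $x$ restricted to an \emph{a priori} fixed compact set, whereas the statement here is for all $x\in\RR^d$. Under (E4) the eigenvalue bounds $a\leq\lambda(x)\leq\Lambda(x)\leq b$ are global, so by the scaling identity $q(y,\xi)=\tau_y(\xi)\, q(y,l_y(\xi))$ it suffices to establish the strict positivity $\inf_{x\in\RR^d,\,l\in\SD} q(x,l)>0$. This follows from the continuity of $q(\cdot,\cdot)$ on $\RR^d\times\SD$, the finiteness and nondegeneracy of $\sigma$, and the uniform two-sided eigenvalue control granted by (E3)--(E4); once this uniform positivity is in hand, the two-case computation above gives the claim directly.
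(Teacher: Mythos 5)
Your route matches the paper's: invoke Theorem \ref{Satz:WAbschaetzungNachUnten}, note that $q$ is real-valued so $k(x,R)$ collapses to $k_0=(\arccos\sqrt{2/3})^{-1}$, substitute the lower symbol bounds from Corollary \ref{Koro:AbschSymbolReell}(ii), and split according to whether $1/(Rk_0)\le 1$ or $\ge 1$. The case analysis (exponent $1/a$ when $R\ge 1/k_0$ so the test frequency satisfies $\|\xi\|\le 1$; exponent $1/b$ when $R\le 1/k_0$ so the test frequency satisfies $\|\xi\|\ge 1$) is exactly the paper's computation, so on the mechanics the two proofs coincide.

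Where you go beyond the paper is in flagging the uniformity-in-$x$ issue, and you are right to do so: Corollary \ref{Koro:AbschSymbolReell}(ii) delivers constants $C_3,C_4$ only for $x$ restricted to a fixed compact set $K$, and the natural choice here is $K=\overline{B}_R(x)$, which varies with both $x$ and $R$. The paper's proof silently writes ``for all $x\in K\subset\RR^d$ ($K$ compact set)'' and then concludes an estimate ``for all $x\in\RR^d$'' with $x$- and $R$-independent constants; this step is a genuine gap as the paper presents it. Your proposed repair---reduce via the scaling identity $q(y,\xi)=\tau_y(\xi)q(y,l_y(\xi))$ to the statement $\inf_{x\in\RR^d,\,l\in\SD}q(x,l)>0$, and obtain that infimum from the fact that (E3)--(E4) confine $E(x)$ to a compact set of symmetric matrices together with continuity of $(E,l)\mapsto q_E(l)$ (using Lemma \ref{Lemma:AbschaetzungTaylorfuerSymbol} as a dominating bound)---is the right idea and would make the corollary's statement honest. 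A fully rigorous write-up would need to spell out that continuity argument, but as a sketch it is correct and addresses a point the paper should have.
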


\begin{proof}
The proof is similar to Corollary \ref{Koro:MaximalabschReellesSymbol}. However, here we use \ref{Satz:WAbschaetzungNachUnten} and bound the symbol from below. 
\\
Since the symbol of the OSL process is real valued, the constant in Theorem \ref{Satz:WAbschaetzungNachUnten} is
 \[
   k_0:=k(x,R)= \left( \arccos \sqrt{2/3}\right)^{-1}.
 \]  
By Corollary \ref{Koro:AbschSymbolReell} there exist constants $C_3,C_4 >0$, such that for all $x \in K \subset \RR^d$ ($K$ compact set) and all $\xi \in \RR^d$:
\[
  |q(x, \xi) | \geq 
  \begin{cases}
    C_3 \| \xi \|^{1/a} & \text{ for } \| \xi \| \leq 1, \\
    C_4\| \xi \|^{1/b} & \text{ for } \| \xi \| \geq 1. 
  \end{cases}
\]
Now let $ \tilde{C} = C_3 \wedge C_4 $. Then for all $R >0$:
\[
  \sup\limits_{\|\xi \| \leq 1/(Rk_0)} \ \inf\limits_{\| y-x\| \leq R} | \ q(y,\xi)|   \geq
  \sup\limits_{\|\xi \| \leq 1/(Rk_0)}
  \begin{cases}
    \tilde{C} \| \xi \|^{1/a} & \text{ for } \| \xi \| \leq 1, \\
    \tilde{C} \| \xi \|^{1/b} & \text{ for } \| \xi \| \geq 1. 
  \end{cases}
\]
Let us first consider $R \geq \frac{1}{k_0}$, hence $ \frac{1}{Rk_0} \leq 1$. This lower bound yields with Theorem \ref{Satz:WAbschaetzungNachUnten}:
\begin{align*}
  \mathbb{P}^x \left( \sup\limits_{s \leq t}\|X_s-x\| \leq  R \right) &\leq C \left( t \sup\limits_{\|\xi \| \leq 1/(Rk_0)} \tilde{C} \| \xi \|^{1/a} \right)^{-1} \\
  &= C \left( t \tilde{C} ( R k_0)^{-1/a} \right)^{-1} \\
  &= \frac{C_1}{t} R^{1/ a}
\end{align*}
and hence the result.
\\
The case $ 0 < R \leq \frac{1}{k_0}$ works analogously, but we have to consider $\frac{1}{R k_0} \geq 1$:
\begin{align*}
  \mathbb{P}^x \left( \sup\limits_{s \leq t}\|X_s-x\| \leq  R \right) &\leq C \left( t \sup\limits_{\|\xi \| \leq 1/(Rk_0)} \tilde{C} \| \xi \|^{1/b} \right)^{-1} \\
  &= C \left( t \tilde{C} ( R k_0)^{-1/b} \right)^{-1} \\
  &= \frac{C_2}{t} R^{1/ b}.
\end{align*}

\end{proof}

By the same reasoning as \ref{Koro:UntereGrenzeEWErsteintrittszeit}
we obtain as well an upper bound for the expected value of the exit time. 

\begin{corollary}
Let $X$ be an OSL process satisfying (E4). Then, there are constants $C_1,C_2 >0$, such that for all $x \in \RR^d$:
\[ \EE^x(T) \leq 
  \begin{cases}
    C_ 1  \cdot R^{1/b} & \text{ for } 0 < R \leq 1/k_0 \\
    C_2 \cdot R^{1/a} & \text{ for } R \geq 1/k_0
  \end{cases}
\]
with $k_0= \left( \arccos \sqrt{2/3}\right)^{-1}$.
\end{corollary}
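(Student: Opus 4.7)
The plan is to bound $\EE^x(T) = \int_0^\infty \PP^x(T > t)\, dt$ by converting the $1/t$–type tail decay of Corollary \ref{Koro:MaximalabschReellesSymbolUntere} into \emph{geometric} tail decay via the Markov property, and then integrating. The raw estimate $\PP^x(T \geq t) \leq C R^\gamma / t$ is not integrable at infinity, so the iteration step is essential; after it, the expectation bound is immediate.

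Consider first the regime $R \geq 1/k_0$, in which case also $2R \geq 1/k_0$. By Corollary \ref{Koro:MaximalabschReellesSymbolUntere}(i), applied uniformly in the starting point, there is a constant $\tilde C > 0$ with
\[
\PP^y(T_{2R}^y \geq s) \leq \frac{2^{1/a}\, \tilde C\, R^{1/a}}{s}, \qquad y \in \RR^d,\ s > 0.
\]
Choose $s_0 := 2^{1 + 1/a}\, \tilde C\, R^{1/a}$, so the right–hand side is at most $1/2$. The key geometric observation is that for any $y \in \overline{B}_R(x)$ one has $\overline{B}_R(x) \subseteq \overline{B}_{2R}(y)$, so that starting at $y$ the process leaves $\overline{B}_R(x)$ no later than it leaves $\overline{B}_{2R}(y)$. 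Applying the Markov property at the deterministic time $n s_0$ and using this domination,
\[
\PP^x(T \geq (n+1)s_0) = \EE^x\!\left[\mathbf{1}_{\{T > n s_0\}}\, \PP^{X_{n s_0}}\!\left(\widetilde T \geq s_0\right)\right] \leq \tfrac{1}{2}\, \PP^x(T \geq n s_0),
\]
where $\widetilde T$ denotes the remaining exit time from $\overline{B}_R(x)$. By induction $\PP^x(T \geq n s_0) \leq 2^{-n}$, and hence
\[
\EE^x(T) = \int_0^\infty \PP^x(T > t)\, dt \;\leq\; s_0 \sum_{n=0}^\infty \PP^x(T \geq n s_0) \;\leq\; 2 s_0 \;=\; C_2\, R^{1/a}.
\]
The case $0 < R \leq 1/k_0$ is treated in the same way, starting from Corollary \ref{Koro:MaximalabschReellesSymbolUntere}(ii) and yielding $\EE^x(T) \leq C_1\, R^{1/b}$.

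The only genuine subtlety is the case split at the threshold $R = 1/k_0$, because doubling $R$ during the iteration step may push the radius across it. For $R \leq 1/(2k_0)$ no crossing occurs and the $1/b$–bound on $T_{2R}^y$ applies verbatim. In the transitional window $R \in (1/(2k_0),\, 1/k_0]$ one must instead use the $1/a$–bound for $T_{2R}^y$, but since $R$ is then bounded away from $0$ and $\infty$ the ratio $R^{1/a - 1/b}$ is harmless and can be absorbed into the constant $C_1$. Apart from this book-keeping the argument is mechanical.
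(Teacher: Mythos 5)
Your proof is correct, but it takes a genuinely different route from the paper. The paper's intended argument is short: it cites the unlabeled corollary immediately preceding the statement (which itself is Corollary 5.8 of \cite{LevyLike}), namely $\EE^x(T) \leq C \left( \sup_{\|\xi\| \leq 1/(Rk(x,R))} \inf_{\|y-x\| \leq R} \mathrm{Re}\, q(y,\xi) \right)^{-1}$, and then substitutes the lower bounds on the symbol from Corollary \ref{Koro:AbschSymbolReell}, exactly as was done in the proof of Corollary \ref{Koro:MaximalabschReellesSymbolUntere}. You instead re-derive that $\EE^x$-bound from scratch: you start from the non-integrable $1/t$ tail estimate of Corollary \ref{Koro:MaximalabschReellesSymbolUntere}, use the Markov property at deterministic grid times together with the inclusion $\overline{B}_R(x) \subseteq \overline{B}_{2R}(y)$ for $y \in \overline{B}_R(x)$ to upgrade it to geometric decay $\PP^x(T \geq n s_0) \leq 2^{-n}$, and then integrate. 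This is essentially a self-contained proof of the abstract exit-time bound that the paper imports as a black box, so your argument is longer but more transparent; the paper's is shorter because the heavy lifting is already packaged in \cite{LevyLike}. Both routes need the uniformity in $x$ that the paper attributes to Corollary \ref{Koro:MaximalabschReellesSymbolUntere}, and you correctly isolate the only delicate point in your version, the doubling of $R$ across the threshold $1/k_0$, which you absorb into the constant. One small notational point: the identity $\PP^x(T \geq (n+1)s_0) = \EE^x[\mathbf{1}_{\{T > n s_0\}} \PP^{X_{n s_0}}(\widetilde{T} \geq s_0)]$ should really be a $\leq$ (or use $\{T \geq n s_0\}$), but since you only need an upper bound this does not affect the conclusion.
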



\subsection{Moments}

By the maximal inequality from above we derive the existence of moments of the maximal process. In the present subsection we always have to assume that (E4) holds. 

\begin{theorem} \label{Satz:MaximumsprozessMomente}
Let $X$ be an OSL process satisfying (E4). 
For all $0 < p < \frac{1}{b}$ we obtain:
\[
  \EE^x \left(  \left(  \sup\limits_{s \leq t} \| X_s -x \|  \right)^p \right) < \infty.
\]
\end{theorem}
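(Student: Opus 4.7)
The plan is to deduce the moment bound from the tail estimate for $M_t := \sup_{s \leq t}\|X_s - x\|$ via the layer cake formula. Since we are in the Feller framework (because (E4) is assumed), and the symbol satisfies the upper bounds of Corollary \ref{Koro:MaximalabschReellesSymbol} in both regimes, we have the two-sided tail control
\[
\mathbb{P}^x(M_t > R) \leq
\begin{cases}
C t R^{-1/a} & \text{if } 0 < R \leq 1,\\
C t R^{-1/b} & \text{if } R \geq 1.
\end{cases}
\]
The first estimate is irrelevant for integrability, but the second controls the tail at infinity, which is where the $p$-th moment can blow up.

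First, I would write, via Fubini,
\[
\mathbb{E}^x(M_t^p) = p \int_0^\infty R^{p-1}\, \mathbb{P}^x(M_t > R)\, dR = p\int_0^1 R^{p-1} \mathbb{P}^x(M_t > R)\, dR + p\int_1^\infty R^{p-1} \mathbb{P}^x(M_t > R)\, dR.
\]
For the first integral, I simply bound the probability by $1$, so it is dominated by $p\int_0^1 R^{p-1}\, dR = 1$ (using $p>0$). For the second integral, the upper bound for $R \geq 1$ from Corollary \ref{Koro:MaximalabschReellesSymbol}(ii) yields
\[
p\int_1^\infty R^{p-1} \mathbb{P}^x(M_t > R)\, dR \leq C p t \int_1^\infty R^{p-1-1/b}\, dR,
\]
which is finite precisely when $p - 1/b < 0$, i.e.\ when $p < 1/b$.

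There is no real obstacle: the argument is a routine application of the layer cake representation combined with the tail bound. The only point to flag is that the restriction $p<1/b$ is sharp in this argument because it matches exactly the exponent coming from (E4); the lower bound $a>1/2$ (from (E3)) is invisible in the small-$R$ regime since the probability is trivially bounded by $1$ there. The role of (E4) is essential and cannot be avoided with this approach, because without an upper bound on $\Lambda(x)$ one has no global tail control at infinity.
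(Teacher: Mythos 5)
Your proof matches the paper's argument exactly: both use the layer cake representation, split the integral at $R=1$, bound the probability by $1$ on $[0,1]$, and invoke the tail estimate from Corollary \ref{Koro:MaximalabschReellesSymbol}(ii) on $[1,\infty)$ to obtain finiteness precisely for $p<1/b$. No substantive difference; the remarks on sharpness and the role of (E4) are accurate but supplementary.
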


\begin{proof}
The proof goes along the same lines as the proof of Theorem 6.8 in \cite{Alex}.
By a well known formula for the expected value of positive random variables, we obtain for  $0 < p < \frac{1}{b}$
\begin{align*}
 \EE^x &\left( \left( \sup\limits_{s \leq t} \| X_s -x \| \right)^{p} \right) = p \int_0^{\infty} y^{p-1} \cdot  \PP^x \left( \sup\limits_{s \leq t} \| X_s -x \| > y  \right) \, dy \\
 &= p \int_0^1 y^{p-1}  \cdot \PP^x \left( \sup\limits_{s \leq t} \| X_s -x \| > y  \right) \, dy + p \int_1^{\infty} y^{p-1} \cdot  \PP^x \left( \sup\limits_{s \leq t} \| X_s -x \| > y  \right) \, dy \\
 & \leq p \int_0^1 y^{p-1}   \, dy + p \int_1^{\infty} y^{p-1} \cdot  \PP^x \left( \sup\limits_{s \leq t} \| X_s -x \| > y  \right) \, dy \\
& \leq 1 + p \cdot C \cdot t \int_1^{\infty} y^{p-1-\frac{1}{b}} \, dy < \infty
\end{align*}
with a constant $C>0$.
The last estimate follows from Corollary \ref{Koro:MaximalabschReellesSymbol}.
\end{proof}

Now we consider asymptotic bounds of fractional moments of the OSL processes, that is, we consider the short- and long-time behaviour of 
\[
   \EE^x \left( \sup\limits_{s \leq t} \| X_s -x \|^{p} \right) \quad \text{ for } p>0.
\]
For $t \in (0,1)$ we cannot hope for a better result than
$$ \EE^x \left( \sup\limits_{s \leq t} \| X_s -x \|^{p} \right) \leq Ct$$
with a constant $C>0$. Otherwise by the Kolmogorov-Chentsov theorem the existence of a continuous modification would follow. 
The next proof follows an idea of \cite{MomenteFranzKuehn}. 

\begin{theorem}
Let $X$ be an OSL process satisfying (E4).
 Then there exists a constant $C>0$ such that
 for all $t \geq 1$ and $0 < p < \frac{1}{b}$:
\begin{equation}
  \EE^x \left( \sup\limits_{s \leq t} \| X_s-x \|^p \right) \leq C t^{b p}.
\end{equation}
\end{theorem}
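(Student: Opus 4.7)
The plan is to mimic the preceding theorem's proof (using the layer-cake formula) but to split the integral at a threshold adapted to $t$, namely at $t^b$, rather than at $1$. This choice is dictated by the scaling behaviour of the symbol: at $R = t^b$ the upper bound $CtR^{-1/b}$ from Corollary \ref{Koro:MaximalabschReellesSymbol} equals a constant, so $t^b$ is precisely the scale above which the tail estimate becomes effective.

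Set $Y := \sup_{s\leq t}\|X_s-x\|$ and write
\[
  \EE^x(Y^p) = p\int_0^\infty y^{p-1}\PP^x(Y>y)\,dy
             = p\int_0^{t^b} y^{p-1}\PP^x(Y>y)\,dy + p\int_{t^b}^\infty y^{p-1}\PP^x(Y>y)\,dy.
\]
For the first integral I would simply use $\PP^x(Y>y)\leq 1$, which gives the bound $t^{bp}$. For the second integral, since $t\geq 1$ we have $t^b\geq 1$, so every $y\geq t^b$ satisfies $y\geq 1$, and Corollary \ref{Koro:MaximalabschReellesSymbol}(ii) applies to yield $\PP^x(Y>y)\leq Cty^{-1/b}$.

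Plugging this in and using $p<1/b$ (so that $p-1-1/b<-1$ and the improper integral converges),
\[
  p\int_{t^b}^\infty y^{p-1}\cdot Ct\, y^{-1/b}\,dy
  = Cpt\,\frac{(t^b)^{p-1/b}}{1/b - p}
  = \frac{Cp}{1/b-p}\,t^{bp}.
\]
Combining the two contributions gives the asserted bound $Ct^{bp}$.

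The main (and essentially only) difficulty is recognising the correct split point; once $t^b$ is chosen, everything else is a routine computation, and the requirement $t\geq 1$ is used exactly to ensure that the threshold $t^b\geq 1$ lies inside the range where the tail estimate of Corollary \ref{Koro:MaximalabschReellesSymbol}(ii) is valid. No further ingredient beyond (E4) (via the bound $b<\infty$ controlling the decay exponent $1/b$) is needed.
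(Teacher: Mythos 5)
Your proof is correct and matches the paper's approach: both use the layer-cake formula and split the integral at the $t$-dependent threshold $t^{bp}$ (equivalently $t^b$ in your unsubstituted version), bound the first piece trivially by $1$, and apply the tail estimate from Corollary \ref{Koro:MaximalabschReellesSymbol}(ii) to the second, which is valid there because $t\geq 1$ forces the threshold to exceed $1$. The only cosmetic difference is that the paper writes the layer-cake formula as $\int_0^\infty \PP^x(Y>y^{1/p})\,dy$ rather than $p\int_0^\infty y^{p-1}\PP^x(Y>y)\,dy$; under the substitution $u=y^p$ these coincide, including the location of the split.
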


\begin{proof}
By Corollary \ref{Koro:MaximalabschReellesSymbol} for all $t \geq 1$ and $0 < p < \frac{1}{b}$ :
\begin{align*}
\EE^x \left( \sup\limits_{s \leq t} \| X_s-x \|^p \right) &= \int_0^{\infty} \PP^x \left( \sup_{s \leq t} \| X_s-x \| > y^{1/p} \right)  \, dy \\
& \leq \int_0^{t^{b p}} 1 \, dy +  \int_{t^{b p}}^{\infty} \PP^x \left( \sup_{s \leq t} \| X_s-x \| > y^{1/p} \right) \, dy \\
&= \int_0^{t^{b p}} 1 \, dy +  C_1 \cdot t \cdot  \int_{t^{b p}}^{\infty} y ^{-\frac{1}{b p}} \, dy \\
&=  t^{b p}  + C_2 \cdot t \cdot t^{b p- 1} = C t^{b p}
\end{align*}
with constants $C,C_1,C_2>0$.
\end{proof}

The analogous result for the short time behaviour reads: 
\begin{theorem}
Let $X$ be an OSL process satisfying (E4). Then there exists a constant $C>0$, such that 
for all $t \in (0,1)$ and $0 < p < \frac{1}{b}$:
\begin{equation}
  \EE^x \left( \sup\limits_{s \leq t} \| X_s-x \|^p \right) \leq C \left(  t^{a p}+ t \right).
\end{equation}
\end{theorem}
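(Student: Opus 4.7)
The plan is to adapt the layer-cake argument used for the long-time result, but split the integral differently to extract the two qualitatively different regimes of the maximal inequality from Corollary \ref{Koro:MaximalabschReellesSymbol}. Recall that corollary gives, under (E4), bounds $\PP^x(\sup_{s\leq t}\|X_s-x\|>R)\leq CtR^{-1/a}$ for $R\leq 1$ and $\PP^x(\sup_{s\leq t}\|X_s-x\|>R)\leq CtR^{-1/b}$ for $R\geq 1$. Since $t\in(0,1)$, the natural transition scale for the maximum at time $t$ is $R\sim t^a$ (small jumps dominate), so in the layer-cake integral with substitution $R=y^{1/p}$, I would split at $y=t^{ap}$ and again at $y=1$.

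Concretely, write
\begin{align*}
\EE^x\left(\sup_{s\leq t}\|X_s-x\|^p\right)
&=\int_0^{\infty}\PP^x\left(\sup_{s\leq t}\|X_s-x\|>y^{1/p}\right)dy \\
&\leq \int_0^{t^{ap}}1\,dy + \int_{t^{ap}}^{1}\PP^x(\cdots>y^{1/p})\,dy + \int_{1}^{\infty}\PP^x(\cdots>y^{1/p})\,dy.
\end{align*}
The first piece contributes $t^{ap}$ trivially. On the middle piece, $y^{1/p}\leq 1$, so I apply the small-$R$ bound $CtR^{-1/a}$ with $R=y^{1/p}$ to get $Ct\int_{t^{ap}}^{1}y^{-1/(ap)}\,dy$. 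On the last piece, $y^{1/p}\geq 1$, so I apply the large-$R$ bound $CtR^{-1/b}$ to get $Ct\int_{1}^{\infty}y^{-1/(bp)}\,dy$.

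Now I would verify the exponents. Since $p<1/b\leq 1/a$ (using $b\geq a$ from (E3), (E4)), we have $1/(bp)>1$, so the tail integral converges and yields a bound of the form $C't$. For the middle integral, $1/(ap)>1$ as well, so the antiderivative $y^{1-1/(ap)}/(1-1/(ap))$ is controlled by its value at the lower endpoint $t^{ap}$; that value equals $t^{ap-1}/(1/(ap)-1)$ up to sign, and multiplying by the prefactor $Ct$ gives exactly $Ct^{ap}$. Combining the three pieces yields $t^{ap}+Ct^{ap}+C't\leq C(t^{ap}+t)$, as required.

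I do not expect any real obstacle here; the argument is a direct mirror of the long-time proof, with the only subtle point being the bookkeeping of which regime of the maximal inequality applies on which sub-interval. The key structural fact making everything work is $p<1/b\leq 1/a$, which simultaneously ensures convergence of both tail integrals and the correct sign of $1-1/(ap)$.
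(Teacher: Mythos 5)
Your proof is correct and follows essentially the same approach as the paper: the identical three-way split of the layer-cake integral at $y=t^{ap}$ and $y=1$, the same application of the two regimes of Corollary \ref{Koro:MaximalabschReellesSymbol}, and the same exponent bookkeeping using $p<1/b\leq 1/a$.
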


\begin{proof}
By \ref{Koro:MaximalabschReellesSymbol} for all $t \in (0,1)$ and $0 < p < \frac{1}{b}$ :
\begin{align*}
\EE^x \left( \sup\limits_{s \leq t} \| X_s-x \|^p \right) &= \int_0^{\infty} \PP^x \left( \sup_{s \leq t} \| X_s-x \| > y^{1/p} \right) dy \\
& \leq \int_0^{t^{ap}} 1 dy + \int_{t^{a p}}^{1} \PP^x \left( \sup_{s \leq t} \| X_s-x \| > y^{1/p} \right) dy \\
& \qquad + \int_1^{\infty} \PP^x \left( \sup_{s \leq t} \| X_s-x \| > y^{1/p} \right) dy \\
& \leq t^{a p} +  C_1 \cdot t \cdot  \int_{t^{a p}}^{1} y^{- \frac{1}{a p}}  dy  + C_2 \cdot t \cdot \int_1^{\infty} y^{-\frac{1}{b p}} dy \\
&= t^{ap} + \tilde{C_1} t \left( 1 -  t^{ap-1} \right) - \tilde{C_2} t  \\
&= C \left( t^{ap} + t \right).
\end{align*}
Recall that  $0 < ap < 1$ and $b p<1$.
\end{proof}


\subsection{Asymptotic Properties}

In what follows we would like to derive local and global growth properties for the paths of OSL processes. We show for which $ \gamma> 0$ 
\[
\varliminf\limits_{t }  \frac{\sup_{s \leq t}\|X_s-x\|}{t^{1/ \gamma}} \quad \text{ or } \quad \varlimsup\limits_t \frac{\sup_{s \leq t}\|X_s-x\|}{t^{1/ \gamma}}
\]
is finite or infinite under $\PP^x$ for $t \rightarrow 0$ repectively $t \rightarrow \infty$. Let us first deal with local properties, that is, growth at zero. To this end we use the generalized Blumenthal-Getoor indices (cf. Definition 5.13 in \cite{LevyLike} and Definition 3.8 in \cite{Schnurr}).

\begin{definition}
Let $q(x,\xi)$ be a negative definite symbol. Then the \emph{generalized Blumenthal-Getoor indices at infinity} are
\begin{align*}
\beta^x_{\infty} &:= \inf \left\lbrace  \gamma >0 : \lim\limits_{\| \xi\| \rightarrow \infty} \frac{\sup_{\| \eta \| \leq \| \xi \|} \sup_{\|y-x \| \leq 1/ \| \xi \|} | q(y,\eta)|}{\| \xi \| ^{\gamma}}= 0 \right\rbrace, \\
\underline{\beta}^x_{\infty} &:= \inf \left\lbrace  \gamma >0 : \varliminf\limits_{\| \xi\| \rightarrow \infty} \frac{\sup_{\| \eta \| \leq \| \xi \|} \sup_{\|y-x \| \leq 1/ \| \xi \|} | q(y,\eta)|}{\| \xi \| ^{\gamma}}= 0 \right\rbrace, \\
\overline{\delta}^x_{\infty} &:= \sup \left\lbrace  \gamma >0 : \varlimsup\limits_{\| \xi\| \rightarrow \infty} \frac{\inf_{\| \eta \| \geq \| \xi \|} \inf_{\|y-x \| \leq 1/ \| \xi \|} \mathrm{Re} \ q(y,\eta)}{\| \xi \| ^{\gamma}}= \infty \right\rbrace, \\
\delta^x_{\infty} &:= \sup \left\lbrace  \gamma >0 : \lim\limits_{\| \xi\| \rightarrow \infty} \frac{\inf_{\| \eta \| \geq \| \xi \|} \inf_{\|y-x \| \leq 1/ \| \xi \|} \mathrm{Re} \ q(y,\eta)}{\| \xi \| ^{\gamma}}= \infty \right\rbrace. 
\end{align*}
\end{definition}


Between the indices the following inequalities hold
\[ 
  0 \leq \delta^x_{\infty} \leq \underline{\beta}^x_{\infty} \leq \beta^x_{\infty} \leq 2 \quad \text{ and } \quad 0 \leq \delta^x_{\infty} \leq \overline{\delta}^x_{\infty} \leq \beta^x_{\infty} \leq 2.
\]  
Indices of stable-like processes have been investigated in Example 5.5 of \cite{Pfad}.
\begin{example}
For the symbol  of a stable-like process $q(x,\xi)= \| \xi \|^{\alpha(x)}$ the indices are:
\[
  \delta^x_{\infty} = \overline{\delta}^x_{\infty}=\underline{\beta}^x_{\infty} = \beta^x_{\infty} = \alpha(x).
\]
\end{example}

In general the indices are not equal. Using the indices at infinity one can derive the short-time behavior of path of Feller processes. This usually depends on the starting point $x$. 

\begin{theorem}
Let $X$ be an It\^o process with symbol  $q(x,\xi)$. \\
Then it holds $\mathbb{P}^x$-a.s.
\begin{align}
\lim\limits_{t \rightarrow 0} \frac{\sup_{s \leq t}\|X_s-x\|}{t^{1/ \gamma}}&=0 \quad	\text{ for all } \gamma > \beta^x_{\infty} ,
\label{Formel:SatzPfadEigenschaft1.Absch}\\
\varliminf\limits_{t \rightarrow 0} \frac{\sup_{s \leq t}\|X_s-x\|}{t^{1/ \gamma}}&=0 \quad	\text{ for all } \gamma > \underline{\beta}^x_{\infty}
\label{Formel:SatzPfadEigenschaft2.Absch}
\intertext{ and if the sector condition \eqref{sector} is satisfied,}
\varlimsup\limits_{t \rightarrow 0} \frac{\sup_{s \leq t}\|X_s-x\|}{t^{1/ \gamma}}&=\infty \quad	\text{ for all } \gamma < \overline{\delta}^x_{\infty}, 
\label{Formel:SatzPfadEigenschaft3.Absch} \\
\lim\limits_{t \rightarrow 0} \frac{\sup_{s \leq t}\|X_s-x\|}{t^{1/ \gamma}}&=\infty \quad	\text{ for all } \gamma < \delta^x_{\infty}.
\label{Formel:SatzPfadEigenschaft4.Absch}
\end{align}
\label{Satz:PfadEigenschaft0} 
\end{theorem}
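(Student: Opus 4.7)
The plan is to deduce all four statements from Borel--Cantelli arguments applied to the two maximal inequalities at our disposal: Theorem \ref{Satz:FellerProzessErstaustrittszeit} drives the upper bounds \eqref{Formel:SatzPfadEigenschaft1.Absch} and \eqref{Formel:SatzPfadEigenschaft2.Absch}, while Theorem \ref{Satz:WAbschaetzungNachUnten} drives the lower bounds \eqref{Formel:SatzPfadEigenschaft3.Absch} and \eqref{Formel:SatzPfadEigenschaft4.Absch}. The template is that of Theorem 5.14 in \cite{LevyLike}.

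For \eqref{Formel:SatzPfadEigenschaft1.Absch} I would fix $\gamma'\in(\beta^x_\infty,\gamma)$ and $\varepsilon>0$, set $t_n:=2^{-n}$, $R_n:=\varepsilon t_n^{1/\gamma}$, and introduce
\[
  H(R):=\sup_{\|y-x\|\leq R}\sup_{\|\xi\|\leq 1/R}|q(y,\xi)|.
\]
Theorem \ref{Satz:FellerProzessErstaustrittszeit} yields $\PP^x(\sup_{s\leq t_n}\|X_s-x\|>R_n)\leq Ct_n H(R_n)$, and the very definition of $\beta^x_\infty$ gives $H(R_n)=o(R_n^{-\gamma'})$; hence the probability is of order $o(t_n^{1-\gamma'/\gamma})$, which is summable. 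Borel--Cantelli combined with the monotonicity of $t\mapsto\sup_{s\leq t}\|X_s-x\|$ then forces $\varlimsup_{t\to 0} t^{-1/\gamma}\sup_{s\leq t}\|X_s-x\|\leq 2^{1/\gamma}\varepsilon$ almost surely, and $\varepsilon\downarrow 0$ finishes. For \eqref{Formel:SatzPfadEigenschaft2.Absch} the scheme is the same, except that $\underline{\beta}^x_\infty$ only supplies a witness sequence $\|\xi_n\|\to\infty$ with $H(1/\|\xi_n\|)/\|\xi_n\|^{\gamma'}\to 0$; I would thin it to a sparse sub-sequence $(\xi_{n_k})$ with summable decay, set $t_k:=(\varepsilon\|\xi_{n_k}\|)^{-\gamma}$, and apply Borel--Cantelli along $t_k\downarrow 0$ to bound the normalised supremum by $\varepsilon$ on a deterministic scale realising the liminf.

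For \eqref{Formel:SatzPfadEigenschaft3.Absch} I would fix $\gamma'\in(\gamma,\overline{\delta}^x_\infty)$ and $M>0$. Under the sector condition \eqref{sector}, the remark after Theorem \ref{Satz:WAbschaetzungNachUnten} provides a constant $K$ with $k(x,R)\leq K$ for every $R>0$. Setting
\[
  F(R):=\inf_{\|\eta\|\geq R}\inf_{\|y-x\|\leq 1/R}\mathrm{Re}\,q(y,\eta),
\]
the definition of $\overline{\delta}^x_\infty$ produces $\|\xi_n\|\to\infty$ with $c_n:=F(\|\xi_n\|)/\|\xi_n\|^{\gamma'}\to\infty$. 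The choice $t_n:=(MK\|\xi_n\|)^{-\gamma}$, $R_n:=Mt_n^{1/\gamma}$ makes $1/(R_n K)=\|\xi_n\|$, so Theorem \ref{Satz:WAbschaetzungNachUnten} gives
\[
  \PP^x\bigl(\sup_{s\leq t_n}\|X_s-x\|\leq R_n\bigr)\leq\frac{C}{t_n F(\|\xi_n\|)}=O\bigl(\|\xi_n\|^{\gamma-\gamma'}/c_n\bigr).
\]
After thinning $(\xi_n)$ to secure summability, Borel--Cantelli yields $\sup_{s\leq t_n}\|X_s-x\|>M t_n^{1/\gamma}$ almost surely infinitely often, hence $\varlimsup\geq M$; sending $M\to\infty$ closes the case. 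Assertion \eqref{Formel:SatzPfadEigenschaft4.Absch} is identical, except that the lim in the definition of $\delta^x_\infty$ delivers $F(\|\xi\|)/\|\xi\|^{\gamma'}\to\infty$ along every $\|\xi\|\to\infty$, so I can work on the deterministic scale $t_n=2^{-n}$ and upgrade the conclusion from a limsup to a full limit.

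The main obstacle lies in \eqref{Formel:SatzPfadEigenschaft2.Absch} and \eqref{Formel:SatzPfadEigenschaft3.Absch}: the liminf/limsup indices only furnish subsequential control of the symbol, so the witness sequence $(\xi_n)$ has to be thinned enough for Borel--Cantelli summability while remaining usable to realise the liminf (resp.\ limsup) in continuous time. The passage from subsequential a.s.\ statements along $t_n$ to the claimed liminf/limsup as $t\to 0$ relies crucially on the monotonicity of the running supremum, which is precisely why the theorem is formulated in terms of $\sup_{s\leq t}\|X_s-x\|$ rather than $\|X_t-x\|$.
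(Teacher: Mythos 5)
The paper does not prove this theorem: it is stated as a citation from the literature (Schilling--Schnurr \cite{Schnurr} and Chapter~5 of \cite{LevyLike}, as you yourself note), so there is no ``paper's own proof'' to compare against. Your reconstruction is essentially the standard argument from that literature and, as far as I can check, correct. The four bullets pair up exactly as you describe: \eqref{Formel:SatzPfadEigenschaft1.Absch}/\eqref{Formel:SatzPfadEigenschaft2.Absch} come from the upper maximal bound (Theorem~\ref{Satz:FellerProzessErstaustrittszeit}), \eqref{Formel:SatzPfadEigenschaft3.Absch}/\eqref{Formel:SatzPfadEigenschaft4.Absch} from the lower one (Theorem~\ref{Satz:WAbschaetzungNachUnten}); the ``lim'' indices permit the dyadic scale $t_n=2^{-n}$ and upgrading to a genuine limit via monotonicity of the running supremum, while the ``liminf/limsup'' indices only supply a witness sequence, which must be thinned until the Borel--Cantelli series converges, and then only a $\varliminf$/$\varlimsup$ claim can be extracted. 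Your bookkeeping in the third bullet (choosing $R_n=1/(K\|\xi_n\|)$ so that $1/(R_nK)=\|\xi_n\|$, then using $k(x,R)\leq K$ to enlarge the $\sup$-set and $R_n\leq 1/\|\xi_n\|$ to shrink the $\inf$-set in the right directions) is exactly the delicate step, and you carry it out correctly.

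Two hygiene points worth being explicit about if you were to write this up in full. First, when sending $\varepsilon\downarrow 0$ or $M\uparrow\infty$ at the end of each bullet, the exceptional null set a priori depends on the parameter; take $\varepsilon,M$ along a countable sequence and note that a countable union of null sets is null. Second, Theorem~\ref{Satz:WAbschaetzungNachUnten} as quoted requires the process to be Feller with $C_c^\infty(\RR^d)\subset D(A)$, whereas the statement under review only posits an It\^o process; this hypothesis gap is inherited from the paper's own formulation, not introduced by you, but in the subsequent application to OSL processes the paper does invoke~(E4) to secure the Feller property, so your use of the theorem is consistent with how the paper deploys it.
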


\begin{proposition}
The OSL process admits the indices
\[
 \beta^x_{\infty} = \underline{\beta}^x_{\infty}= \frac{1}{\lambda(x)} \quad \text{ and } \quad \overline{\delta}^x_{\infty}=\delta^x_{\infty} = \frac{1}{\Lambda(x)}.
\]
\end{proposition}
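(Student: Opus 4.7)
The plan is to combine the sharp two-sided bounds on the symbol from Theorem \ref{Satz:obereAbschSymbolReell} with the scaling identity $q(x,t^{E(x)}\xi)=t\,q(x,\xi)$ and the continuous dependence of the eigenvalues $\lambda(\cdot),\Lambda(\cdot)$ on $x$. Since $E(x)$ is symmetric by (E1) and Lipschitz by (E2), standard perturbation theory (for instance Weyl's inequality) implies that $\lambda$ and $\Lambda$ are continuous functions of $x$, and by symmetry one may fix unit eigenvectors $e_\lambda,e_\Lambda\in\SD$ of $E(x)$ associated with the eigenvalues $\lambda(x)$ and $\Lambda(x)$, respectively.

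For the upper bound $\beta^x_\infty\leq 1/\lambda(x)$, I would fix $\gamma>1/\lambda(x)$, pick $\varepsilon>0$ small enough that $1/(\lambda(x)-\varepsilon)<\gamma$, and use continuity of $\lambda$ to obtain, for all $\|\xi\|$ sufficiently large, the inequality $\lambda(y)\geq\lambda(x)-\varepsilon$ on the shrinking ball $\{\|y-x\|\leq 1/\|\xi\|\}$. The universal upper bound $|q(y,\eta)|\leq C_2\|\eta\|^{1/\lambda(y)}$ for $\|\eta\|\geq 1$ from Theorem \ref{Satz:obereAbschSymbolReell}(i) then yields
\[
\sup_{\|\eta\|\leq\|\xi\|}\sup_{\|y-x\|\leq 1/\|\xi\|}|q(y,\eta)|\leq C\|\xi\|^{1/(\lambda(x)-\varepsilon)},
\]
so dividing by $\|\xi\|^\gamma$ and letting $\|\xi\|\to\infty$ produces $0$. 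Conversely, for $\underline{\beta}^x_\infty\geq 1/\lambda(x)$, since $t^{E(x)}e_\lambda=t^{\lambda(x)}e_\lambda$ the scaling gives $q(x,s\,e_\lambda)=s^{1/\lambda(x)}q(x,e_\lambda)$, while the continuity and positivity of $q(x,\cdot)$ on $\SD$ already invoked in the proof of Theorem \ref{Satz:obereAbschSymbolReell} guarantee $q(x,e_\lambda)>0$. Testing the double supremum at $y=x$ and $\eta=\|\xi\|e_\lambda$ (admissible for $\|\xi\|\geq 1$) produces a lower bound of order $\|\xi\|^{1/\lambda(x)}$, so the liminf is $+\infty$ for every $\gamma<1/\lambda(x)$. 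Combined with the trivial inequality $\underline{\beta}^x_\infty\leq\beta^x_\infty$ this forces $\underline{\beta}^x_\infty=\beta^x_\infty=1/\lambda(x)$.

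The computation of $\overline{\delta}^x_\infty$ and $\delta^x_\infty$ is dual. Evaluating the double infimum at $y=x$ and $\eta=\|\xi\|e_\Lambda$ and applying the scaling yields
\[
\inf_{\|\eta\|\geq\|\xi\|}\inf_{\|y-x\|\leq 1/\|\xi\|}q(y,\eta)\leq q(x,\|\xi\|e_\Lambda)=\|\xi\|^{1/\Lambda(x)}q(x,e_\Lambda),
\]
so the quotient by $\|\xi\|^\gamma$ tends to $0$ for any $\gamma>1/\Lambda(x)$, giving $\overline{\delta}^x_\infty\leq 1/\Lambda(x)$. For the matching lower bound I would fix a compact neighborhood $K$ of $x$; for $\|\xi\|$ large the shrinking ball lies in $K$, and continuity of $\Lambda$ forces $\Lambda(y)\leq\Lambda(x)+\varepsilon$ there. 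Theorem \ref{Satz:obereAbschSymbolReell}(ii) then gives $q(y,\eta)\geq C_4\|\eta\|^{1/\Lambda(y)}\geq C\|\eta\|^{1/(\Lambda(x)+\varepsilon)}$ for $\|\eta\|\geq 1$; taking the inner $\inf$--$\inf$ leaves at least $\|\xi\|^{1/(\Lambda(x)+\varepsilon)}$, which divided by $\|\xi\|^\gamma$ diverges for every $\gamma<1/\Lambda(x)$. Hence $\delta^x_\infty\geq 1/\Lambda(x)$, and together with $\delta^x_\infty\leq\overline{\delta}^x_\infty\leq 1/\Lambda(x)$ the proof is complete.

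The only delicate points are the continuity of $\lambda,\Lambda$ and the availability of the extremal eigenvectors $e_\lambda,e_\Lambda$ realising these extremal eigenvalues; both are immediate consequences of the symmetry hypothesis (E1) together with the Lipschitz condition (E2), so no real obstacle is expected. The principal conceptual ingredient is the interplay of two opposite phenomena: the upper (respectively lower) $x$-uniform control of $|q|$ which fixes the \emph{worst} direction $e_\lambda$ (respectively \emph{best} direction $e_\Lambda$), and the scaling identity, which converts directional behaviour into a sharp power of $\|\xi\|$.
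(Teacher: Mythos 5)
Your proof is correct, and it is in fact more explicit than the paper's about the two-sided nature of the argument. The paper's proof uses only the upper estimate $|q(y,\eta)|\leq C\|\eta\|^{1/\lambda(y)}$ of Theorem \ref{Satz:obereAbschSymbolReell} together with continuity of $\lambda$ to treat $\beta^x_\infty$, and only the lower estimate $|q(y,\eta)|\geq C\|\eta\|^{1/\Lambda(y)}$ to treat $\delta^x_\infty$; as literally written, this yields $\beta^x_\infty\leq 1/\lambda(x)$ and $\delta^x_\infty\geq 1/\Lambda(x)$ but says nothing about the reverse inequalities. The extra ingredient you add --- evaluating the scaling identity along the extremal eigenvectors, $q(x,s\,e_\lambda)=s^{1/\lambda(x)}q(x,e_\lambda)$ and $q(x,s\,e_\Lambda)=s^{1/\Lambda(x)}q(x,e_\Lambda)$ with $q(x,e_\lambda),q(x,e_\Lambda)>0$ by the positivity of $q(x,\cdot)$ on $\SD$ used in the proof of Theorem \ref{Satz:obereAbschSymbolReell} --- is exactly what shows those bounds are saturated, forcing $\underline{\beta}^x_\infty\geq 1/\lambda(x)$ and $\overline{\delta}^x_\infty\leq 1/\Lambda(x)$. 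Combined with the trivial orderings $\underline{\beta}^x_\infty\leq\beta^x_\infty$ and $\delta^x_\infty\leq\overline{\delta}^x_\infty$, this pins all four indices to the claimed values. So while the raw ingredients (the symbol bounds of Theorem \ref{Satz:obereAbschSymbolReell}, continuity of $\lambda,\Lambda$ from (E1) and (E2), and the scaling identity) coincide with the paper's, your treatment makes the sharpness step explicit where the paper elides it.
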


\begin{proof}
Let us first derive $ \beta^x_{\infty}$:

Because of Theorem \ref{Satz:obereAbschSymbolReell} there exists a constant $C>0$, such that for all $y,\eta \in \RR^d$ with $ \| \eta \| \geq 1$:
\[
   | q(y,\eta) | \leq C \| \eta \| ^{1/ \lambda(y)}.
\] 
Furthermore, 
\[
  C \sup_{\| \eta \| \leq \| \xi \|} \sup_{\|y-x \| \leq 1/ \| \xi \|} \| \eta \| ^{1/ \lambda(y)} = C \sup_{\|y-x \| \leq 1/ \| \xi \|} \| \xi \| ^{1/ \lambda(y)}.
\]  
By the Lipschitz continuity of $E(x)$ the eigenvalues are continuous, too. Hence, 
\[
 \lim_{n \rightarrow \infty} \sup_{\|y-x \| \leq 1/n } \| \xi \| ^{1/ \lambda(y)} = \lim_{\| y \| \rightarrow \| x \|} \| \xi \|^{1/ \lambda(y)} = \| \xi \| ^{1 / \lambda(x)}.
\] 
We arrive at
\[
\beta^x_{\infty}= \inf \left\lbrace  \gamma >0 : \lim\limits_{\| \xi\| \rightarrow \infty} \frac{\sup_{\| \eta \| \leq \| \xi \|} \sup_{\|y-x \| \leq 1/ \| \xi \|} | q(y,\eta)|}{\| \xi \| ^{\gamma}}= 0 \right\rbrace= \frac{1}{\lambda(x)}.
\]
Analogously for $\underline{\beta}^x_{\infty}$.
\\
Now we come to $ \delta^x_{\infty} $: Since the sector condition holds, we can replace  $\mathrm{Re} \ q(\cdot, \cdot)$ by $|q(\cdot, \cdot)|$ in the definition of the index under consideration. By Theorem \ref{Satz:obereAbschSymbolReell} there exists a constant $C>0$, such that for all $\eta \in \RR^d$ with $\| \eta \| \geq 1$ and all $y \in K$, where $K \subset \RR^d$ is compact,
\[
  | q(y,\eta) | \geq C \| \eta \| ^{1 / \Lambda(y)}.
\]
Furthermore,
\[
  C \inf_{\| \eta \| \geq \| \xi \|} \inf_{\|y-x \| \leq 1/ \| \xi \|} \| \eta \|^{1 / \Lambda(y)}= C \inf_{\|y-x \| \leq 1/ \| \xi \|} \| \eta \|^{1 / \Lambda(y)}
\]
and by continuity of eigenvalues we obtain
\[
   \lim_{n \rightarrow \infty} \inf_{\|y-x \| \leq 1/ n}  \| \eta \|^{1 / \Lambda(y)} = \| \xi \|^{1/ \Lambda(x)}.
 \]
Summing up
\[
\delta^x_{\infty} := \sup \left\lbrace  \gamma >0 : \lim\limits_{\| \xi\| \rightarrow \infty} \frac{\inf_{\| \eta \| \geq \| \xi \|} \inf_{\|y-x \| \leq 1/ \| \xi \|} \mathrm{Re} \ q(y,\eta)}{\| \xi \| ^{\gamma}}=  \infty \right\rbrace = \frac{1}{\Lambda(x)}.
\]
Analogoulsly for $\overline{\delta}^x_{\infty}$.
\end{proof}

The short-time behavior can be derived by Theorem \ref{Satz:PfadEigenschaft0} . Since we are dealing with a local property, the boundedness assumption (E4) is not needed. However, compare Corollary \ref{cor:longterm}. Even if we proved that this result remains true for the unbounded case, the infimum would be zero and the result would no longer be useful.

\begin{corollary}
Let $X$ be an OSL process.
It holds $\PP^x$-a.s.:
\begin{align*}
\lim\limits_{t \rightarrow 0} \frac{\sup_{s \leq t}\|X_s-x\|}{t^{1/ \gamma}}&=0 \quad	\text{ for all } \gamma > \frac{1}{\lambda(x)},\\
\lim\limits_{t \rightarrow 0} \frac{\sup_{s \leq t}\|X_s-x\|}{t^{1/ \gamma}}&=\infty \quad	\text{ for all } \gamma < \frac{1}{\Lambda(x)} .
\end{align*}
\end{corollary}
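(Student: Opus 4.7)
The plan is to combine the preceding proposition with the general short-time growth result Theorem \ref{Satz:PfadEigenschaft0}. The proposition just above identifies the generalized Blumenthal-Getoor indices at infinity of an OSL process as
\[
\beta^x_\infty = \underline{\beta}^x_\infty = \frac{1}{\lambda(x)} \quad\text{and}\quad \overline{\delta}^x_\infty = \delta^x_\infty = \frac{1}{\Lambda(x)},
\]
so the two claims follow by direct substitution into the appropriate assertions of Theorem \ref{Satz:PfadEigenschaft0}.

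More precisely, for the first statement I would apply \eqref{Formel:SatzPfadEigenschaft1.Absch}: since $\gamma > 1/\lambda(x) = \beta^x_\infty$, that assertion gives $\lim_{t\to 0} \sup_{s\leq t}\|X_s-x\|/t^{1/\gamma} = 0$ $\PP^x$-a.s. For the second statement I would apply \eqref{Formel:SatzPfadEigenschaft4.Absch}: since $\gamma < 1/\Lambda(x) = \delta^x_\infty$, that assertion gives $\lim_{t\to 0} \sup_{s\leq t}\|X_s-x\|/t^{1/\gamma} = \infty$ $\PP^x$-a.s.

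Before invoking the second part I need to check the hypotheses. The symbol of an OSL process is real-valued (since $\sigma$ is symmetric, cf. Theorem \ref{Satz:reelleSymbolEigenschaftenElementar} and the representation of $q$ via a cosine), so $\mathrm{Im}\,q(x,\xi)\equiv 0$ and the sector condition \eqref{sector} is trivial. The fact that $X$ is an It\^o process has been established in Section 2, so all hypotheses of Theorem \ref{Satz:PfadEigenschaft0} are in force. There is essentially no obstacle at this stage, since the analytic content sits entirely in the index computation of the preceding proposition and in Theorem \ref{Satz:PfadEigenschaft0}; the corollary is a one-line consequence. The only real point of care, already mentioned in the remark preceding the corollary, is that one cannot relax (E4) in the analogous long-time statement, but for the short-time growth considered here no additional boundedness of the eigenvalues is required.
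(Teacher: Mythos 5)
Your proof is correct and takes exactly the route the paper intends: Theorem \ref{Satz:PfadEigenschaft0} applied with the index identifications $\beta^x_\infty = 1/\lambda(x)$ and $\delta^x_\infty = 1/\Lambda(x)$ from the preceding proposition, together with the observation that the sector condition is automatic since the OSL symbol is real-valued. Your remark on why (E4) is not needed here matches the paper's own comment preceding the corollary.
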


In order to analyze the long-time behavior of the paths we consider the indices at zero 
(cf. Definition 5.17 in \cite{LevyLike} and \cite{Schnurr} Definition 3.8). Here, we assume (E4), therefore, the processes we are dealing with are Feller.

\begin{definition}
Let $q(x,\xi)$ be a negative definite symbol with bounded coefficients, that is,  $\sup_{x \in \mathbb{R}^d} |q(x,\xi)| \leq C \left( 1+ \| \xi \|^2 \right)$  with constant $C>0$. The \emph{ generalized Blumenthal-Getoor indices (at zero)} are
\begin{align*}
\beta_0:= & \sup \left\lbrace \gamma \geq 0 : \lim\limits_{\| \xi \| \rightarrow 0} \frac{\sup_{x \in \mathbb{R}^d} \sup_{\| \eta \| \leq \| \xi \|} | q(x,\eta)|}{\| \xi \|^{\gamma}}=0 \right\rbrace, \\
\underline{\beta}_0:= & \sup \left\lbrace \gamma \geq 0 : \varliminf\limits_{\| \xi \| \rightarrow 0} \frac{\sup_{x \in \mathbb{R}^d} \sup_{\| \eta \| \leq \| \xi \|} | q(x,\eta)|}{\| \xi \|^{\gamma}}=0 \right\rbrace, \\
\overline{\delta}_0 := & \inf \left\lbrace \gamma \geq 0 : \varlimsup\limits_{\| \xi \| \rightarrow 0} \frac{\inf_{x \in \mathbb{R}^d} \inf_{\| \eta \| \geq \| \xi \|} \mathrm{Re} \ q(x,\eta)}{\| \xi \|^{\gamma}}=\infty \right\rbrace, \\
\delta_0 := & \inf \left\lbrace \gamma \geq 0 : \lim\limits_{\| \xi \| \rightarrow 0} \frac{\inf_{x \in \mathbb{R}^d} \inf_{\| \eta \| \geq \| \xi \|} \mathrm{Re} \ q(x,\eta)}{\| \xi \|^{\gamma}}=\infty \right\rbrace. 
\end{align*}
\end{definition}


It is easily derived that
\[
  0 \leq \beta_0 \leq \underline{\beta}_0 \leq \delta_0 \leq 2  \quad \text{ and } \quad 0 \leq  \beta_0 \leq \overline{\delta}_0 \leq \delta_0\leq 2.
\]  

\begin{example}
\begin{itemize}
\item[(a)] For the stable-like process we obtain
\[
  \beta_0 = \underline{\beta}_0 =\inf_{x \in \RR^d} \alpha(x) \quad \text{ and } \quad \delta_0  = \overline{\delta}_0 = \sup_{x \in \RR^d} \alpha(x).
\]
\item[(b)] For the $\alpha$-stable L\'evy process all indices at infinity and zero are $\alpha$. 
\end{itemize}
\end{example}


\begin{theorem} \label{Satz:globWachstumFellerProz}
Let $X$ be a Feller process with symbol $q(x,\xi)$, having bounded coefficients  \\
The we obtain $\mathbb{P}^x$-a.s.
\begin{align}
\lim\limits_{t \rightarrow \infty} \frac{\sup_{s \leq t}\|X_s-x\|}{t^{1/ \gamma}}&=0 \quad	\text{ for all } \gamma < \beta_0 ,
\label{Formel:SatzPfadEigenschaftunendlich1.Absch}\\
\varliminf\limits_{t \rightarrow \infty} \frac{\sup_{s \leq t}\|X_s-x\|}{t^{1/ \gamma}}&=0 \quad	\text{ for all } \gamma < \underline{\beta}_0
\label{Formel:SatzPfadEigenschaftunendlich2.Absch}\\
\intertext{ and if the sector condition \eqref{sector} is satisfied,}
\varlimsup\limits_{t \rightarrow \infty} \frac{\sup_{s \leq t}\|X_s-x\|}{t^{1/ \gamma}}&=\infty \quad	\text{ for all } \gamma > \overline{\delta}_0 ,
\label{Formel:SatzPfadEigenschaftunendlich3.Absch}\\
\lim\limits_{t \rightarrow \infty} \frac{\sup_{s \leq t}\|X_s-x\|}{t^{1/ \gamma}}&=\infty \quad	\text{ for all } \gamma > \delta_0 .
\label{Formel:SatzPfadEigenschaftunendlich4.Absch}
\end{align} 
\end{theorem}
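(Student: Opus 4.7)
My plan is to adapt the short-time analogue Theorem~\ref{Satz:PfadEigenschaft0}, replacing the role of $R\to 0$ (large $\|\xi\|$, indices at infinity) by $R\to\infty$ (small $\|\xi\|$, indices at zero). In every case one sets $R=c\,t^{1/\gamma}$, so that $1/R\to 0$ as $t\to\infty$, feeds this into the maximal inequality of Theorem~\ref{Satz:FellerProzessErstaustrittszeit} (upper bound, statements \eqref{Formel:SatzPfadEigenschaftunendlich1.Absch} and \eqref{Formel:SatzPfadEigenschaftunendlich2.Absch}) or Theorem~\ref{Satz:WAbschaetzungNachUnten} (lower bound, statements \eqref{Formel:SatzPfadEigenschaftunendlich3.Absch} and \eqref{Formel:SatzPfadEigenschaftunendlich4.Absch}), exploits the relevant index, and closes by Borel--Cantelli.

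For \eqref{Formel:SatzPfadEigenschaftunendlich1.Absch} I pick $\gamma'\in(\gamma,\beta_0)$. The definition of $\beta_0$ together with the boundedness of the coefficients yields $\sup_{x\in\RR^d}\sup_{\|\eta\|\le 1/R}|q(x,\eta)|\le(1/R)^{\gamma'}$ for $R$ large, and Theorem~\ref{Satz:FellerProzessErstaustrittszeit} with $R=\epsilon t^{1/\gamma}$ then produces $\PP^x(\sup_{s\le t}\|X_s-x\|>\epsilon t^{1/\gamma})\le C_\epsilon\,t^{1-\gamma'/\gamma}$. Since the exponent is negative, the series along $t_n=2^n$ converges; Borel--Cantelli combined with monotonicity of $s\mapsto\sup_{u\le s}\|X_u-x\|$ (using $t_{n+1}/t_n=2$) promotes the subsequential limit to a full limit. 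For \eqref{Formel:SatzPfadEigenschaftunendlich2.Absch} the same estimate is run along a subsequence $\xi_k\to 0$ extracted from the liminf defining $\underline\beta_0$, with a further sub-subsequence chosen so that the corresponding times $t_k=(\epsilon\|\xi_k\|)^{-\gamma}$ grow geometrically and the series is summable; no interpolation is needed because only a liminf is asserted.

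For \eqref{Formel:SatzPfadEigenschaftunendlich4.Absch} the sector condition delivers a uniform bound $k(x,R)\le k_0$, so Theorem~\ref{Satz:WAbschaetzungNachUnten} applied at the test frequency $\|\xi\|=1/(Rk_0)$ together with the trivial estimate $\Re q(y,\xi)\ge\inf_{\|\eta\|\ge\|\xi\|}\Re q(y,\eta)$ gives
\[
\PP^x\bigl(\sup_{s\le t}\|X_s-x\|\le R\bigr)\le\frac{C}{t\cdot\inf_{x\in\RR^d}\inf_{\|\eta\|\ge 1/(Rk_0)}\Re q(x,\eta)}.
\]
Choosing $\gamma'\in(\delta_0,\gamma)$, the definition of $\delta_0$ gives $\inf_x\inf_{\|\eta\|\ge\|\xi\|}\Re q(x,\eta)\ge\|\xi\|^{\gamma'}$ for small $\|\xi\|$, whence $\PP^x(\sup_{s\le t}\|X_s-x\|\le Mt^{1/\gamma})\le C_M\,t^{\gamma'/\gamma-1}$; Borel--Cantelli along $t_n=2^n$ and monotonicity conclude. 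Statement \eqref{Formel:SatzPfadEigenschaftunendlich3.Absch} proceeds analogously along a subsequence extracted from the limsup defining $\overline\delta_0$, with a further sub-subsequence arranged to prescribe sufficiently fast growth of the ratio.

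The decisive technical point is the passage from the qualitative limit/liminf/limsup conditions in the index definitions to quantitative polynomial bounds with a strictly improved exponent $\gamma'$: this is precisely what turns the maximal inequalities into summable tail estimates and makes Borel--Cantelli applicable. The step I expect to require the most care is \eqref{Formel:SatzPfadEigenschaftunendlich3.Absch}, where only a limsup from $\overline\delta_0$ is available so that no a priori quantitative rate is known; one must then pass to a further sub-subsequence along which any prescribed growth rate of the index ratio can be forced, so that the Borel--Cantelli series remains summable.
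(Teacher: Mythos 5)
The paper does not prove this theorem; it is cited from the literature (compare Chapter~5, in particular Theorem~5.16, of \cite{LevyLike}), so there is no in-text argument to compare against. Your reconstruction is correct and matches the standard proof from that reference: you upgrade the qualitative limit/liminf/limsup conditions in the index definitions to quantitative polynomial tail bounds by passing to a slightly worse exponent $\gamma'$, feed $R \sim t^{1/\gamma}$ into the maximal inequalities of Theorems~\ref{Satz:FellerProzessErstaustrittszeit} and~\ref{Satz:WAbschaetzungNachUnten}, and close with Borel--Cantelli along the dyadic sequence $t_n = 2^n$, using the monotonicity of $t\mapsto\sup_{s\le t}\|X_s-x\|$ to interpolate between lattice times. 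Your observation that the $\underline{\beta}_0$ and $\overline{\delta}_0$ cases only yield a sparse sequence and therefore require an extra sub-subsequence extraction to make the Borel--Cantelli series summable is precisely the technical refinement needed; the remaining mild informality (the implicit intersection over a countable family $\epsilon = 1/m$ resp.\ $M = m$ to upgrade from fixed $\epsilon$, $M$ to the asserted limit being $0$ or $\infty$) is routine and does not affect the correctness of the argument.
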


\begin{proposition}
For the OSL process we derive
\[
  \beta_0= \underline{\beta}_0= \inf\limits_{x \in \RR^d} \frac{1}{\Lambda(x)}.
\]
\end{proposition}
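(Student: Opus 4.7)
The plan is to split the equality into the two inequalities $\inf_x 1/\Lambda(x) \leq \beta_0$ and $\underline{\beta}_0 \leq \inf_x 1/\Lambda(x)$ and combine them with the trivial $\beta_0 \leq \underline{\beta}_0$. Both halves are driven directly by the two-sided estimates of Theorem~\ref{Satz:obereAbschSymbolReell}, coupled with the scaling identity $q(x,t^{E(x)}\xi)=t\,q(x,\xi)$.

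For the lower bound on $\beta_0$, I would apply Theorem~\ref{Satz:obereAbschSymbolReell}(i) to obtain $|q(y,\eta)| \leq C_1 \|\eta\|^{1/\Lambda(y)}$ for $\|\eta\|\leq 1$. Since for $\|\eta\|\in(0,1]$ the map $\alpha \mapsto \|\eta\|^{\alpha}$ is decreasing, we have $\|\eta\|^{1/\Lambda(y)} \leq \|\eta\|^{\inf_x 1/\Lambda(x)}$; together with the monotonicity of $\|\eta\|\mapsto\|\eta\|^\alpha$ on $[0,1]$, this yields, for $\|\xi\|\leq 1$,
$$\sup_{y \in \RR^d} \sup_{\|\eta\|\leq\|\xi\|} |q(y,\eta)| \;\leq\; C_1\,\|\xi\|^{\inf_x 1/\Lambda(x)}.$$
For any $\gamma < \inf_x 1/\Lambda(x)$, dividing by $\|\xi\|^\gamma$ and letting $\|\xi\|\to 0$ gives a limit of $0$; hence $\gamma \leq \beta_0$, and passing to the supremum yields $\beta_0 \geq \inf_x 1/\Lambda(x)$.

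For the upper bound on $\underline{\beta}_0$, fix $\gamma > \inf_x 1/\Lambda(x) = 1/\sup_x\Lambda(x)$ and choose $x_0 \in \RR^d$ with $\Lambda(x_0) > 1/\gamma$. By (E1), $\Lambda(x_0)$ is a real eigenvalue of the symmetric operator $E(x_0)$ with some unit eigenvector $v \in \SD$, so $r^{E(x_0)} v = r^{\Lambda(x_0)} v$ for all $r>0$. Substituting $\xi = v$ and $t = s^{1/\Lambda(x_0)}$ into the scaling identity produces the exact relation
$$q(x_0, s v) = s^{1/\Lambda(x_0)}\, q(x_0, v), \qquad s > 0.$$
Restricting the double supremum to the single pair $(y,\eta)=(x_0, \|\xi\| v)$ then gives
$$\sup_{y \in \RR^d} \sup_{\|\eta\|\leq\|\xi\|} |q(y,\eta)| \;\geq\; q(x_0,v)\,\|\xi\|^{1/\Lambda(x_0)},$$
so dividing by $\|\xi\|^\gamma$ (with $1/\Lambda(x_0) - \gamma < 0$) makes the quotient tend to $+\infty$ as $\|\xi\|\to 0$. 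In particular its $\liminf$ is not zero, forcing $\gamma \geq \underline{\beta}_0$; letting $\gamma \downarrow \inf_x 1/\Lambda(x)$ closes the argument.

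The only point requiring a moment's thought is the strict positivity $q(x_0,v) > 0$, without which the second step breaks down. From the representation $q(x_0,v) = \int_\Gamma (1-\cos\langle y,v\rangle)\,\phi_{x_0}(dy)$ this can only fail if $\phi_{x_0}$ is concentrated on the hyperplane $v^\perp$, which by the polar structure of $\phi_{x_0}$ would require $\sigma$ to live on $\SD\cap v^\perp$. This degenerate case is implicitly excluded by the non-degeneracy of $\sigma$ in the OSL framework, and outside it the argument goes through unchanged.
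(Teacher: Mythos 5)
Your argument is correct, and it is in fact more complete than the paper's. For the inequality $\beta_0 \geq \inf_x 1/\Lambda(x)$ you follow exactly the paper's computation via Theorem~\ref{Satz:obereAbschSymbolReell}(i). The paper then dismisses the rest with ``Analogously for $\underline{\beta}_0$,'' but that phrase can only reproduce the same \emph{lower} bound for $\underline{\beta}_0$ — which is automatic anyway, since $\beta_0 \leq \underline{\beta}_0$. The reverse inequality $\underline{\beta}_0 \leq \inf_x 1/\Lambda(x)$, without which the claimed equality is not established, is never proved there. Your eigenvector construction supplies precisely this missing half.

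The eigenvector step is not merely convenient but necessary: Theorem~\ref{Satz:obereAbschSymbolReell}(ii) only gives $|q(x_0,\eta)| \geq C\|\eta\|^{1/\lambda(x_0)}$ for $\|\eta\| \leq 1$, and since $1/\lambda(x_0) \geq 1/\Lambda(x_0)$ this bound is too weak — dividing by $\|\xi\|^\gamma$ would blow up only for $\gamma > 1/\lambda(x_0)$, which can exceed $\inf_x 1/\Lambda(x)$. Restricting to the direction $v$ with $E(x_0)v = \Lambda(x_0) v$, where the generalized radial component is exactly $\tau_{x_0}(sv) = s^{1/\Lambda(x_0)}$, recovers the sharp exponent. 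Your caveat concerning $q(x_0,v) > 0$ is also well placed and correctly resolved: the paper's own proof of Theorem~\ref{Satz:obereAbschSymbolReell}(ii) already rests on the strict positivity of $q$ on $K \times \SD$, which amounts to the same non-degeneracy of $\sigma$ you invoke (namely, that $\sigma$ not be concentrated on any great subsphere $\SD \cap v^\perp$). So your proof is a genuine strengthening of the paper's argument rather than a restatement of it.
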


\begin{proof}
By Theorem \ref{Satz:obereAbschSymbolReell} there is a constant $C>0$ s.t.
\[
  |q(x, \eta) |  \leq C \| \eta \|^{1/ \Lambda(x)} \quad \text{ for } x \in \RR^d \text{ and } \| \eta \| \leq 1.
\]  
Hence for small values of $\xi $ (i.e. $ \| \xi \| \leq 1$) 
\begin{align*}
\lim\limits_{\| \xi \| \rightarrow 0} \frac{\sup_{x \in \mathbb{R}^d} \sup_{\| \eta \| \leq \| \xi \|} | q(x,\eta)|}{\| \xi \|^{\gamma}} &\leq \lim\limits_{\| \xi \| \rightarrow 0} \frac{\sup_{x \in \mathbb{R}^d} \| \xi \|^{1/\Lambda(x)}}{\| \xi \|^{\gamma}} \\
& = \lim\limits_{\| \xi \| \rightarrow 0} \frac{ \| \xi \|^{\inf_{x \in \mathbb{R}^d} 1/\Lambda(x)}}{\| \xi \|^{\gamma}}  =0,
\end{align*}
if $ \gamma < \inf_{x \in \mathbb{R}^d} 1/\Lambda(x)$, and hence the desires result. Analogoulsy for $\underline{\beta}_0$.
\end{proof}


\begin{corollary} \label{cor:longterm}
Let $X$ be an OSL process satisfying (E4).
It holds $\PP^x$-a.s.
\begin{align*}
  \lim\limits_{t \rightarrow \infty} \frac{\sup_{s \leq t}\|X_s-x\|}{t^{1/ \gamma}}&=0 \quad	\text{ for all } \gamma < \inf\limits_{x \in \RR^d} \frac{1}{\Lambda(x)},\\
  \lim\limits_{t \rightarrow \infty} \frac{\sup_{s \leq t}\|X_s-x\|}{t^{1/ \gamma}}&=\infty \quad	\text{ for all } \gamma > \frac{1}{a} .
\end{align*}
\end{corollary}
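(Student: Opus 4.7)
The strategy is to deduce both statements from Theorem \ref{Satz:globWachstumFellerProz}. Under (E4) the OSL process is Feller, its symbol is real-valued (so the sector condition \eqref{sector} holds trivially) and it has bounded coefficients by Theorem \ref{Satz:reelleSymbolEigenschaftenElementar}(ii). Hence all hypotheses of Theorem \ref{Satz:globWachstumFellerProz} are satisfied and each of the four assertions \eqref{Formel:SatzPfadEigenschaftunendlich1.Absch}--\eqref{Formel:SatzPfadEigenschaftunendlich4.Absch} is available.

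The first limit is essentially immediate: by the proposition directly preceding the corollary one has $\beta_0 = \underline{\beta}_0 = \inf_{x \in \RR^d} 1/\Lambda(x)$, so any $\gamma < \inf_x 1/\Lambda(x)$ is strictly below $\beta_0$, and \eqref{Formel:SatzPfadEigenschaftunendlich1.Absch} gives the claimed $\PP^x$-a.s.\ convergence to $0$.

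For the second limit I would invoke \eqref{Formel:SatzPfadEigenschaftunendlich4.Absch}, for which it suffices to verify $\delta_0 \leq 1/a$. The starting point is Corollary \ref{Koro:AbschSymbolReell}(ii), which yields $q(x,\eta) \geq C \|\eta\|^{1/a}$ for $\|\eta\| \leq 1$ together with the companion bound $q(x,\eta) \geq C \|\eta\|^{1/b}$ for $\|\eta\| \geq 1$. Combined with the scaling identity $q(x,\eta) = \tau_x(\eta) q(x, l_x(\eta))$ and the uniform control on $\tau_x$ provided by (E3)--(E4), these estimates should give
\[
\inf_{x \in \RR^d}\; \inf_{\|\eta\| \geq \|\xi\|} q(x,\eta) \;\geq\; C \|\xi\|^{1/a}
\qquad \text{for all } \|\xi\| \leq 1,
\]
so that for every $\gamma > 1/a$ the defining ratio of $\delta_0$ dominates $C\|\xi\|^{1/a-\gamma} \to \infty$ as $\|\xi\| \to 0$; whence $\delta_0 \leq 1/a$ and \eqref{Formel:SatzPfadEigenschaftunendlich4.Absch} delivers the second limit.

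The main obstacle is the uniformity step in the last display. The lower bounds of Theorem \ref{Satz:obereAbschSymbolReell} are obtained via a continuity/compactness argument and therefore come with a constant depending on a compact $K \subset \RR^d$. Promoting them to an estimate that is uniform in $x \in \RR^d$ requires a positive lower bound on $q(x,\theta)$ for $(x,\theta) \in \RR^d \times \SD$ read off from the integral representation of $q$ (using that $\sigma$ is a fixed finite measure on $\SD$) together with the scaling argument above. This is the only nontrivial point; once it is in place, the rest of the proof is a direct invocation of Theorem \ref{Satz:globWachstumFellerProz}.
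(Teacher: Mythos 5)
Your handling of the first limit matches the paper exactly: compute $\beta_0 = \inf_x 1/\Lambda(x)$ and invoke \eqref{Formel:SatzPfadEigenschaftunendlich1.Absch}.

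For the second limit you take a genuinely different route. The paper does \emph{not} go through the index $\delta_0$ at all --- it never computes $\delta_0$ or $\overline{\delta}_0$ for OSL processes. Instead it argues directly: fix $\epsilon$ with $\gamma > \epsilon > 1/a$, apply the maximal inequality of Corollary \ref{Koro:MaximalabschReellesSymbolUntere} to get $\PP^x\bigl(\sup_{s \leq t}\|X_s - x\| \leq t^{1/\epsilon}\bigr) \leq C\,t^{1/(\epsilon a) - 1}$, sum along $t_k = 2^k$, apply Borel--Cantelli, and interpolate between dyadic times. Your route via $\delta_0 \leq 1/a$ together with \eqref{Formel:SatzPfadEigenschaftunendlich4.Absch} is the natural ``dual'' of the first claim and is conceptually cleaner, but it requires establishing the bound on $\delta_0$, which is exactly the computation the paper chose to sidestep. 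Your back-of-the-envelope argument for $\delta_0 \leq 1/a$ is fine once the constant in the lower bound for the symbol is uniform over $\RR^d$: for $\|\xi\|$ small, $\inf_{\|\eta\| \geq \|\xi\|} q(x,\eta) \geq \min(C\|\xi\|^{1/a}, C)$ and the ratio diverges for $\gamma > 1/a$.

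You are right to flag the uniformity of the lower-bound constant as the only nontrivial step, but you should note that this obstacle is present in the paper's proof as well, not specific to your route: the constant $\tilde C$ in Corollary \ref{Koro:MaximalabschReellesSymbolUntere} is obtained from Corollary \ref{Koro:AbschSymbolReell}(ii), which is stated on a compact set $K$, yet the Borel--Cantelli step sends $R = t_k^{1/\epsilon} \to \infty$, so $K = \overline{B}_R(x)$ is unbounded. The observation that repairs both arguments is that $q(x,\theta)$ depends on $x$ only through the matrix $E(x)$, and under (E3)--(E4) the image $\{E(x) : x \in \RR^d\}$ lies in the compact set of symmetric matrices with spectrum in $[a,b]$; the map $(E,\theta) \mapsto \int_{\SD}\int_0^\infty \bigl(1 - \cos\langle r^E\omega,\theta\rangle\bigr)\,r^{-2}\,dr\,\sigma(d\omega)$ is continuous and (assuming it is positive) therefore bounded below on that compact set $\times\, \SD$. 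Spelling this out would close the gap for either approach; without it, your proposal is no more and no less complete than the paper's.
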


\begin{proof}
The first claim directly follows from \ref{Satz:globWachstumFellerProz} and the representation of the indices of the OSL process. 
In order to show the second property we follow the same lines as the proof of Theorem 5.16 in \cite{LevyLike}.
For the maximal process we write a usual
\[  
  \left( X_{\cdot} - x \right)_t^{\ast} := \sup_{s \leq t}\|X_s-x\|.
\]  
We assume  $ \gamma >   \epsilon > \frac{1}{a}$. By Corollary  \ref{Koro:MaximalabschReellesSymbolUntere} it follows for all $ t \geq 1$
\[
 \PP^{x} \left( \left( X_{\cdot} - x \right)_t^{\ast} \leq t^{1/\epsilon} \right) \leq \frac{C}{t} t^{\frac{1}{\epsilon a}}= C t^{\frac{1}{\epsilon a} -1 }
 \]
with a constant $C>0$. We chose $t=t_k=2^{k}, k \in \NN$, and sum up the respective probabilities. We obtain: 
\[
 \sum\limits_{k=1}^{\infty} \PP^{x} \left( \left( X_{\cdot} - x \right)_{t_k}^{\ast} \leq t^{1/\epsilon} \right)  \leq C \sum\limits_{k=1}^{\infty} 2^{k \left(\frac{1}{\epsilon a} -1 \right) } < \infty,
\] 
since $\epsilon > \frac{1}{a}$. By the Borel-Cantelli lemma: 
\[
   \PP^x \left( \varlimsup\limits_{k \rightarrow \infty} \left\lbrace \left( X_{\cdot} - x \right)_{t_k}^{\ast} > t_k^{1/ \epsilon} \right\rbrace \right)=0
 \]
respectively
\[
   \left( X_{\cdot} - x \right)_{t_k}^{\ast} > t_k^{1/ \epsilon} \quad \text{ for almost all } k \in \NN.
\] 
We chose $t \in [t_{k}, t_{k+1}]$ an for sufficiently big $ k$ we obtain for all $\omega$:
\[
   \left( X_{\cdot}(\omega) - x \right)_t^{\ast} \geq \left( X_{\cdot}(\omega) - x \right)_{t_{k}}^{\ast} > t_{k}^{1 / \epsilon } \geq 2^{1 / \epsilon} t^{1 / \epsilon }.
\]
Hence for all $\gamma >   \epsilon $
\[
   t^{-1/\gamma} \left( X_{\cdot} - x \right)_t^{\ast} > 2^{1 / \epsilon} t^{1 / \epsilon -1/ \gamma} \rightarrow \infty \quad \PP^{x}\text{-f.s. for } t \rightarrow \infty
 \]
and the result follows since we have chosen $\epsilon> \frac{1}{a}$. 
\end{proof}



\subsection{$p$-Variation}

Fine properties of the the paths of the process can be derived in the general framework. The additional hypothesis (E4) is not needed. 

\begin{definition}
For $p \in (0, \infty) $ and a c\`adl\`ag function $f : \RR_+ \rightarrow \RR^d$ 
\begin{equation}
  V_p(f,t):=V_p(f, [0,t]):= \sup\limits_{\pi_n} \sum_{i =1}^n \| f(t_j)- f(t_{j-1}) \|^p
\end{equation}
is called \emph{(strong) $p$-variation} of $f$ on $[0,t]$, where the supremum is taken over all partitions \\
$\pi_n= \left(0=t_0<t_1< \hdots < t_n=t \right), n \in \NN,$.

We say that $f$ is of  \emph{finite $p$-variation}, if $V_p(f, t) < \infty$ for all $t \geq 0$.
\end{definition}


A stochastic process $X$ is called \emph{ of finite $p$-variation}, if almost all path are of finite $p$-variation. 

We make use of the following general result on $p$-variation of strong Markov processes (cf. \cite{mansta2005}).

\begin{theorem}\label{Satz:StarkerMarkovProzVariation}
Let $X=(X_t)_{t \geq 0}$ be a strong Markov process. If there exist constants $\alpha >0 , \beta > (3-e)/(e-1) \approx 0,16395$ and 
$C,R_0 >0$, such that
\[
    a(t,R):= \sup\limits_{x \in \RR^d} \sup\limits_{s \leq t} \PP^x( \|X_s -x\| \geq R) \leq C t^{\beta}R^{-\alpha} \quad \text{ for all } t >0 \text{ and } R \in [0,R_0).
\]
Then it holds
\[
   \PP^x(V_p(X_t, t) < \infty) = 1 \quad \text{ for all } p > \frac{\alpha}{\beta}   \text{ and } x \in \RR^d.
\]
\end{theorem}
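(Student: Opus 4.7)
The plan is to follow the dyadic-scale approach used in Manstavi\v{c}ius's general $p$-variation theorem for strong Markov processes. The key idea is to upgrade the single-time tail bound on $a(t,R)$ to an oscillation bound on arbitrary sub-intervals via the strong Markov property, and then control the $p$-variation by a Borel--Cantelli argument over dyadic scales.

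First, I would establish an oscillation inequality. Fixing a sub-interval $[u,u+h] \subseteq [0,t]$ and conditioning on $\calF_u$, an Ottaviani/Etemadi-type reflection argument combined with the strong Markov property yields, for $R > 0$ with $a(h,R/2) < 1/2$,
\[
\PP^x\!\left(\sup_{s \in [u,u+h]} \|X_s - X_u\| \geq R\right) \leq \frac{a(h,R/2)}{1-a(h,R/2)} \leq 2\, a(h, R/2) \leq C_1\, h^{\beta} R^{-\alpha}.
\]
This is the engine that transfers single-time information to path information uniformly in the starting point.

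Second, I would dyadically subdivide $[0,t]$ into the $2^n$ intervals $I_k^n = [(k-1)h_n, k h_n]$ of length $h_n = t\, 2^{-n}$, and introduce a sequence of thresholds $R_n$ decreasing to zero. Let $N_n$ denote the number of intervals $I_k^n$ on which the oscillation of $X$ exceeds $R_n$. Summing the oscillation inequality over $k$ gives
\[
\EE^x[N_n] \leq 2^n \cdot C_1 h_n^{\beta} R_n^{-\alpha} = C_1 t^{\beta} \cdot 2^{n(1-\beta)} R_n^{-\alpha}.
\]
On the complementary event, the oscillation of $X$ on each $I_k^n$ is at most $R_n$, so one would bound the total $p$-variation by a telescoping sum over dyadic scales, splitting the contribution of ``large'' intervals (controlled by $N_n$ together with the almost-sure boundedness of the c\`adl\`ag path on $[0,t]$) from the contribution of ``small'' intervals (bounded by $2^n R_n^p$).

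The optimization of the sequence $R_n$ and the application of the Borel--Cantelli lemma to ensure that only finitely many bad intervals occur almost surely is the heart of the argument, and this is where the unusual threshold $\beta > (3-e)/(e-1)$ enters. The constant arises from a careful recursion in which the geometric decay of $R_n$ must balance both the exponential growth of the number of sub-intervals and the polynomial blow-up in $R_n^{-\alpha}$, so that simultaneously $\sum_n \EE[N_n] < \infty$ and $\sum_n 2^n R_n^p < \infty$ for every $p > \alpha/\beta$. Writing out this optimization in detail, and tracking every constant so as to arrive at the sharp threshold, is the main technical obstacle; once it is in place the remainder of the proof reduces to a routine Borel--Cantelli and c\`adl\`ag path argument.
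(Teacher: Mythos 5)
This theorem is not proved in the paper; it is quoted, without proof, from \cite{mansta2005} (Manstavi\v{c}ius, \emph{The $p$-variation of strong Markov processes}, Ann.\ Probab.\ 32 (2004), 2053--2066), which the authors cite immediately before the statement. There is therefore no proof in the paper to compare against, and the relevant question is whether your sketch would actually close; as written, it would not.

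Your first step, the reflection inequality $\PP^x\bigl(\sup_{s\in[u,u+h]}\|X_s-X_u\|\geq R\bigr)\leq a(h,R/2)/(1-a(h,R/2))$ obtained by applying the strong Markov property at the first exit time from $B_R(X_u)$, is correct and is used in the reference. The gap is in the counting step. With $R_n\downarrow 0$ and $h_n=t\,2^{-n}$, your first-moment bound
\[
\EE^x[N_n]\leq C_1\,t^{\beta}\,2^{n(1-\beta)}R_n^{-\alpha}
\]
not only fails to be summable but actually diverges as $n\to\infty$ whenever $\beta<1$, since both $2^{n(1-\beta)}$ and $R_n^{-\alpha}$ grow. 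Because $(3-e)/(e-1)\approx 0.164<1$, the theorem is designed precisely to cover $\beta\in\bigl((3-e)/(e-1),1\bigr)$, a regime in which a Borel--Cantelli argument built on $\EE^x[N_n]$ cannot work. The missing ingredient is a \emph{multiplicative} tail bound on the number $M(R,[0,h])$ of disjoint $R$-oscillations of the path in a window of length $h$: iterating the strong Markov property at the successive stopping times at which a new oscillation of size $R$ completes yields an estimate of the form $\PP^x\bigl(M(R,[0,h])\geq k\bigr)\leq\bigl(C\,a(h,R/2)\bigr)^k$, a geometric decay in $k$ that is much stronger than a first-moment estimate and is what lets the argument reach $\beta<1$; the constant $(3-e)/(e-1)$ then emerges from optimising the resulting double sum over windows and geometric scales $R_n$. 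You also leave implicit how oscillation control on \emph{dyadic} intervals bounds $V_p$ over \emph{arbitrary} partitions; the usual device of assigning each partition subinterval to a dyadic scale by its length requires control of the oscillation on every dyadic interval at that scale, which is exactly what the event $\{N_n\geq1\}$ breaks. Both points need to be repaired before the sketch becomes a proof.
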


\begin{proposition}
Let $X$ be an OSL process. Then it holds: 
\[
 \PP^x(V_p(X_t, t) < \infty) = 1 \quad \text{ for all } p > \sup\limits_{x \in \RR^d} \frac{1}{\lambda(x)} \text{ and } x \in \RR^d.
\] 
\end{proposition}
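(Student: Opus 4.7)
The plan is to invoke Theorem~\ref{Satz:StarkerMarkovProzVariation} directly. Since the OSL process is a universal Markov solution of the SDE \eqref{SDE} with Lipschitz coefficients, it is strong Markov, so the only remaining task is to extract suitable exponents $\alpha,\beta$ from the uniform maximal inequality of Corollary~\ref{Koro:MaximalabschReellesSymbol}.

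Given $p > \sup_{x \in \RR^d} 1/\lambda(x) = 1/\inf_{x \in \RR^d}\lambda(x)$, I would first choose a constant $a$ satisfying
\[
\max\{1/2,\, 1/p\} < a \leq \inf_{x \in \RR^d}\lambda(x);
\]
this interval is non-empty because $\inf_x \lambda(x) > 1/2$ by~(E3) and $\inf_x \lambda(x) > 1/p$ by the choice of $p$. With this $a$ one still has $1/2 < a \leq \lambda(x)$ for every $x \in \RR^d$, so the hypotheses of Corollary~\ref{Koro:MaximalabschReellesSymbol}(i) are satisfied, yielding a constant $C>0$ with
\[
\PP^x\!\left(\sup_{s \leq t}\|X_s - x\| > R\right) \leq C\, t\, R^{-1/a} \qquad \text{for all } x \in \RR^d,\ t > 0,\ R \in (0,1].
\]
Since $\sup_{s \leq t}\PP^x(\|X_s - x\| \geq R)$ is bounded above by the left-hand side, this gives
\[
a(t,R) \leq C\, t\, R^{-1/a}\qquad \text{for all } t>0 \text{ and } R \in [0,1),
\]
i.e.\ the hypothesis of Theorem~\ref{Satz:StarkerMarkovProzVariation} holds with $\alpha = 1/a$, $\beta = 1$ and $R_0 = 1$; the condition $\beta > (3-e)/(e-1) \approx 0.164$ is trivially verified.

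Theorem~\ref{Satz:StarkerMarkovProzVariation} therefore yields $\PP^x(V_q(X_t, t) < \infty) = 1$ for every $q > \alpha/\beta = 1/a$. By the choice $1/a < p$ this in particular covers $q = p$, which is the desired conclusion. The only step calling for any care is the selection of $a$: one must pick $a$ strictly below $\inf_x \lambda(x)$ so that $1/a < p$, while keeping $a > 1/2$ so that Corollary~\ref{Koro:MaximalabschReellesSymbol}(i) remains applicable; this is exactly why the statement involves a strict inequality and a supremum rather than an explicit constant. Beyond this, the argument is pure bookkeeping, all the heavy lifting having been done in the earlier maximal estimates.
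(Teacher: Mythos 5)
Your argument is correct and follows essentially the same route as the paper: both reduce the claim to the $p$-variation criterion of Theorem~\ref{Satz:StarkerMarkovProzVariation} via the maximal inequality, with $\beta=1$. The only cosmetic difference is in how the exponent $\alpha$ in that theorem is produced. The paper invokes the Blumenthal--Getoor index $\beta^x_\infty = 1/\lambda(x)$ together with Theorem~\ref{Satz:FellerProzessErstaustrittszeit} to get $a(t,R)\leq \tilde C\,t\,R^{-p'}$ for $p'>\sup_x 1/\lambda(x)$ and $R$ small, while you go through Corollary~\ref{Koro:MaximalabschReellesSymbol}(i) after tightening the constant $a$ from (E3) so that $\max\{1/2,1/p\} < a \leq \inf_x\lambda(x)$. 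Your version makes the ``intermediate exponent'' step more explicit (the paper's final line, which sets $\alpha=\sup_x 1/\lambda(x)$ after having derived a bound with exponent $p$, is slightly loosely phrased and your choice of $a$ with $1/a<p$ fixes this cleanly). One small remark worth spelling out if you wanted to be fully rigorous: Corollary~\ref{Koro:MaximalabschReellesSymbol}(i) is stated for the particular $a$ from (E3), so you should note that its proof (via Theorem~\ref{Satz:obereAbschSymbolReell} and $|q(x,\xi)|\leq C\|\xi\|^{1/\lambda(x)}$ for $\|\xi\|\geq 1$) in fact holds for any $a\in(1/2,\inf_x\lambda(x)]$, which is exactly what licenses your re-choice of $a$.
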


\begin{proof}
For all $t,R>0$
\[
a(t,R)= \sup\limits_{x \in \RR^d} \sup\limits_{s \leq t} \PP^x( \|X_s -x\| \geq R) \leq \sup\limits_{x \in \RR^d} \PP^x \left( \sup\limits_{s \leq t} \|X_s -x\| \geq R \right).
\]
By Theorem \ref{Satz:FellerProzessErstaustrittszeit} there exists a constant $C>0$, such that for all $ R >0$, $x \in \RR^d$ and $t >0$:
\[
   \PP^x \left( \sup\limits_{s \leq t} \|X_s -x\| \geq R \right)\leq Ct \sup_{\| y- x \| \leq R} \sup_{\| \xi \| \leq 1/R} |q(y, \xi)|.
\]   
For all $p> \sup_{x \in \RR^d} \beta^{x}_{\infty} =  \sup_{x \in \RR^d} \frac{1}{\lambda(x)}$ there exists due to the representation of $\beta^{x}_{\infty} $ off an OSL process a constant $R_0>0$ with
\[
a(t,R) \leq  Ct  \sup\limits_{x \in \RR^d} \sup_{\| y- x \| \leq R} \sup_{\| \xi \| \leq 1/R} |q(y, \xi)| \leq \tilde{C} t R^{-p} \quad \text{ for all } t>0 \text{ and } R \in [0,R_0).
\]
for $ \alpha=\sup_{x \in \RR^d} \frac{1}{\lambda(x)}$, $ \beta=1$ and $R_0>0$ . The claim follows by Theorem \ref{Satz:StarkerMarkovProzVariation}.
\end{proof}

\textbf{Acknowledgements:} Financial support by the DFG (German Science Foundation) for the project SCHN 1231/2-1 of AS is gratefully acknowledged.





\bibliographystyle{alpha}

\bibliography{OperatorStableLikeRevision01an.bib}


\pagebreak
\appendix

\section{Appendix: Matrix exponential, generalized polar coordinates and estimates}

As described in the Introduction, the operator norm on $L(\RR^d)$ is written as
\begin{equation}
\|A \|:= \sup\limits_{\|x\| =1} \|Ax \| = \sup\limits_{\|x\| \leq 1} \|Ax \| .
\end{equation} 
and the matrix exponential is defined by
\begin{equation}
r^A:= \exp( A \ln r)=\sum\limits_{k=0}^{\infty} \frac{A^k}{k!}(\ln r)^k \quad \in L(\RR^d),
\end{equation}
furthermore in our considerations for every $x\in\bbr^d$,  $E(x) \in L(\RR^d)$ and
$$ \lambda(x): = \min \lbrace \mathrm{Re } \ \sigma(x) : \sigma(x) \text{ is eigenvalue of } E(x) \rbrace$$
and
$$ \Lambda(x): =\max \lbrace \mathrm{Re } \ \sigma(x) : \sigma(x) \text{ is eigenvalue of } E(x)  \rbrace.$$

\begin{theorem} \label{Satz:AllgAbschDiagOperatorEW}
Let $ E : \RR^d \rightarrow L(\RR^d)$ be symmetric. Then, there exists a constant $C>0$, such that
\begin{equation}
\| r^{E(x)} \| \leq 
\begin{cases}
  C r^{\lambda(x)} & \text{ for } 0 < r < 1 ,\\
  C r^{\Lambda(x)} &\text{ for } r \geq 1.
\end{cases}
\end{equation}
\end{theorem}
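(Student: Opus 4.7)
The plan is to exploit the orthogonal diagonalizability of symmetric operators so that the matrix exponential becomes a diagonal matrix of scalar powers, at which point the estimates reduce to monotonicity of $r \mapsto r^{\mu}$ in the exponent.

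First I would fix $x \in \RR^d$ and use the spectral theorem: since $E(x)$ is symmetric (and real), there exists an orthogonal matrix $U(x)$ and a diagonal matrix $D(x)=\mathrm{diag}(\mu_1(x),\ldots,\mu_d(x))$ with real entries such that $E(x)=U(x)D(x)U(x)^{T}$. Note that because the eigenvalues $\mu_i(x)$ are already real, we have $\lambda(x)=\min_i \mu_i(x)$ and $\Lambda(x)=\max_i \mu_i(x)$ (the ``$\mathrm{Re}$'' in the definitions is vacuous here).

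Next I would plug this decomposition into the power series for $r^{E(x)}$. Since $U(x)^{T}U(x)=I$, each $E(x)^{k}=U(x)D(x)^{k}U(x)^{T}$, so term-by-term
\begin{equation*}
r^{E(x)}=\sum_{k=0}^{\infty}\frac{E(x)^{k}}{k!}(\ln r)^{k}=U(x)\left(\sum_{k=0}^{\infty}\frac{D(x)^{k}}{k!}(\ln r)^{k}\right)U(x)^{T}=U(x)\, r^{D(x)}\, U(x)^{T},
\end{equation*}
where $r^{D(x)}=\mathrm{diag}(r^{\mu_1(x)},\ldots,r^{\mu_d(x)})$. Because $U(x)$ and $U(x)^{T}$ are orthogonal and hence have operator norm $1$, submultiplicativity of $\|\cdot\|$ gives $\|r^{E(x)}\|\leq \|r^{D(x)}\|=\max_{1\leq i\leq d} r^{\mu_i(x)}$, the last equality being the well-known formula for the operator norm of a diagonal matrix.

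Finally I would split into the two regimes. For $0<r<1$ the map $\mu\mapsto r^{\mu}$ is strictly decreasing, so $\max_i r^{\mu_i(x)}=r^{\min_i \mu_i(x)}=r^{\lambda(x)}$. For $r\geq 1$ the map is nondecreasing, so $\max_i r^{\mu_i(x)}=r^{\max_i \mu_i(x)}=r^{\Lambda(x)}$. In both cases one may simply take $C=1$. No genuine obstacle arises here; the only subtlety worth flagging is to stay inside the \emph{real} spectral theorem (orthogonal diagonalization) rather than invoking a complex unitary diagonalization, because then the $U(x)$ factors really do have operator norm $1$ with respect to the Euclidean norm on $\RR^{d}$ used throughout the paper. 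The argument would break for a general (non-symmetric) $E(x)$, where one has Jordan blocks and picks up an additional logarithmic factor, which is precisely why Proposition \ref{Prop:AbschMEExponent} invokes symmetry via this theorem.
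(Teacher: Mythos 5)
Your proof is correct and follows essentially the same strategy as the paper: orthogonal diagonalization of the symmetric operator, submultiplicativity of the operator norm to drop the orthogonal conjugating factors, and reduction to the diagonal case. The one small improvement is that you compute $\|r^{D(x)}\|$ directly as $\max_i r^{\mu_i(x)}$ using the exact formula for the Euclidean operator norm of a diagonal matrix, obtaining $C=1$; the paper instead bounds the $1$-norm of $r^{D(x)}$ by $d\,r^{\lambda(x)}$ (resp.\ $d\,r^{\Lambda(x)}$) and then appeals to equivalence of norms on $L(\RR^d)$, so it only produces a dimension-dependent constant. Your shortcut is cleaner and sharper, but it is not a different proof.
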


\begin{proof}
If $E(x)$ is symmetric, there exist orthogonal matices $ \RR^d \ni x \mapsto O(x) \in GL(\RR^d)$ and diagonal matrices $ \RR^d \ni x \mapsto D(x) \in GL(\RR^d)$, such that
$$E(x)= O(x)D(x)O(x)^{-1}.$$
For $r>0$ we get by well know facts on the matrix exponential
$$r^{E(x)}=O(x)r^{D(x)}O(x)^{-1}.$$
Hence by the sub-multiplicativity of the operator norm
$$ \| r^{E(x)} \| \leq \| O(x) \|  \| r^{D(x)}\| \| O(x)^{-1} \| = \| r^{D(x)}\| .$$
For diagonal matrices $D(x):=\text{diag} (a_1(x), \ldots , a_d(x))$
$$r^{D(x)}=\text{diag} (r^{a_1(x)}, \ldots , r^{a_d(x)}).$$
with the taxi norm $| \cdot |$ we get for $0<r < 1$:
$$ | r^{D(x)} |= \sum_{k=1}^{d}r^{a_k(x)}  \leq d  r^{\lambda_{D(x)}(x)}.$$
Since $D(x)$ and $E(x)$ have the same eigenvalues, the claim follows in this case. The second case is derived by  $\max\{a_1(x), \ldots , a_d(x)\}= \Lambda_{D(x)}(x)$ and $r \geq 1$ in an analogous way. In $\RR^d$ resp. $L(\RR^d)$ all norms are equivalent. Hence the estimate holds for operator norm with a different constant $C>0$. 
\end{proof}
Next we give a short overview on so called generalized polar coordinates (cf. \cite{Bierme} Section 2):
Let $E(x) \in L(\RR^d)$ with $\lambda(x) >0$ for all $x \in \RR^d$.
By Lemma 6.1.5 in \cite{Peter} 
\begin{equation}
\| \xi \|_0 := \int_0^1 \| r^{E(x)} \xi \| \frac{dr}{r}, \quad x,\xi \in \RR^d,
\label{Formel:NormVerallPolarkordinaten}
\end{equation}
defines a norm $ \| \cdot \| $ on $ \RR^d$ such that $ \Psi : (0,\infty)  \times S_0 \rightarrow \Gamma, \Psi(r, \theta)= r^{E(x)} \theta$ is a homeomorphism where $S_0:= \lbrace \xi \in \RR^d: \| \xi\|_0 =1 \rbrace$.
Since for every $\xi \in \Gamma$ and every $x \in \RR^d$ the function $ r \mapsto \| r^{E(x)} \xi \|$ is monotonically increasing, every $\xi \in \Gamma$ can be represented uniquely by
\[
     \xi= \tau_x(\xi)^{E(x)} l_x(\xi)
\]     
where $\tau_x(\xi)>0$  is called \emph{radial component} and $l_x(\xi) \in S_0$ \emph{direction}. The pair 
\begin{equation}
  \left( \tau_x(\xi), l_x(\xi) \right)
\end{equation} 
is called \emph{generalized polar coordinates} of $\xi$ w.r.t. the matrix $E(x)$. The functions $\tau_x$ and $l_x$ are continuous. They have the properies
\begin{itemize}
\item[(i)] $\tau_x(\xi) \rightarrow \infty$ for $ \| \xi \| \rightarrow \infty$ and $\tau_x(\xi) \rightarrow 0$ for $ \| \xi  \| \rightarrow 0$;
\item[(ii)] $\tau_x(-\xi)= \tau_x(\xi)$ and $l_x(-\xi)=-l_x(\xi)$;
\item[(iii)] $\tau_x(r^{E(x)} \xi)= r \tau_x(\xi)$ and $l_x(r^{E(x)}\xi) =l_x(\xi)$ for all $r>0$.
\end{itemize}
In addition $S_0= \lbrace \xi \in \RR^d : \tau_x(\xi)=1 \rbrace$ is a compact set. 

\begin{remark}
If $E(x)$ is symmetric, one can chose  $\| \cdot \|_0 = \| \cdot \| $ s.t. $S_0= \SD$.
\end{remark}

The following lemma contains bounds for the growth rate of  $\tau_x(\xi)$ which depend on the real part of the smallest eigenvalue of $E(x)$. 

\begin{lemma}\label{Lemma:AbschVerallgPolarkoordianten}
Let $E : \RR^d \rightarrow L(\RR^d)$ be symmetric. Then there exist constants $C_1, \hdots, C_4>0$, such that
\begin{itemize}
\item[(i)] for all  $\|\xi \| \leq 1$ or $\tau_x(\xi) \leq 1$ it holds
\[
  C_1 \| \xi \|^{1/\lambda(x) } \leq \tau_x(\xi) \leq C_2 \| \xi \|^{1/\Lambda(x)};
\]
\item[(ii)] for all $\| \xi \| \geq 1$ oder $\tau_x(\xi) \geq 1$ it holds:
\[
  C_3 \| \xi \|^{1/\Lambda(x)} \leq \tau_x(\xi) \leq C_4 \| \xi \|^{1/\lambda(x)}.
\]
\end{itemize}
\end{lemma}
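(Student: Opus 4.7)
The plan is to extract $\tau_x(\xi)$ from the polar decomposition $\xi = \tau_x(\xi)^{E(x)} l_x(\xi)$ by sandwiching it between operator-norm estimates on $\tau_x(\xi)^{E(x)}$ and $\tau_x(\xi)^{-E(x)}$, which Theorem \ref{Satz:AllgAbschDiagOperatorEW} supplies, and then inverting the resulting scalar inequalities for $\tau_x(\xi)$. Because $E(x)$ is symmetric, the remark preceding the lemma allows me to take $S_0 = \SD$, so $\|l_x(\xi)\| = 1$ in the Euclidean norm; this is what lets me convert operator-norm bounds into scalar bounds.

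First I would write down the two basic inequalities
\[
\|\xi\| = \|\tau_x(\xi)^{E(x)} l_x(\xi)\| \leq \|\tau_x(\xi)^{E(x)}\|, \qquad 1 = \|l_x(\xi)\| = \|\tau_x(\xi)^{-E(x)} \xi\| \leq \|\tau_x(\xi)^{-E(x)}\|\,\|\xi\|,
\]
which together pin $\tau_x(\xi)$ from both sides once I can estimate $\|\tau_x(\xi)^{\pm E(x)}\|$. For part (i) I assume $\tau_x(\xi) \leq 1$, so $\tau_x(\xi)^{-1} \geq 1$, and I apply Theorem \ref{Satz:AllgAbschDiagOperatorEW} to each piece, obtaining
\[
\|\tau_x(\xi)^{E(x)}\| \leq C\,\tau_x(\xi)^{\lambda(x)}, \qquad \|\tau_x(\xi)^{-E(x)}\| = \|(\tau_x(\xi)^{-1})^{E(x)}\| \leq C\,\tau_x(\xi)^{-\Lambda(x)}.
\]
Substituting gives $\|\xi\| \leq C\,\tau_x(\xi)^{\lambda(x)}$ and $1 \leq C\,\tau_x(\xi)^{-\Lambda(x)}\|\xi\|$, and solving for $\tau_x(\xi)$ yields exactly $C_1 \|\xi\|^{1/\lambda(x)} \leq \tau_x(\xi) \leq C_2 \|\xi\|^{1/\Lambda(x)}$.

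Part (ii) is handled by the symmetric argument: when $\tau_x(\xi) \geq 1$, Theorem \ref{Satz:AllgAbschDiagOperatorEW} gives $\|\tau_x(\xi)^{E(x)}\| \leq C\,\tau_x(\xi)^{\Lambda(x)}$ and $\|\tau_x(\xi)^{-E(x)}\| \leq C\,\tau_x(\xi)^{-\lambda(x)}$, and the same two-line computation produces the required estimates with $\lambda(x)$ and $\Lambda(x)$ swapped.

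The main obstacle I anticipate is the \emph{alternative form} of the hypothesis: the statement allows either $\|\xi\| \leq 1$ \emph{or} $\tau_x(\xi) \leq 1$, and these two conditions are not literally equivalent. I would handle this by noting that the upper bounds just derived show that $\tau_x(\xi) \leq 1$ forces $\|\xi\|$ to stay bounded, and conversely $\|\xi\| \leq 1$ forces $\tau_x(\xi)$ to stay bounded (via the continuity and monotonicity of $r \mapsto \|r^{E(x)}\xi\|$); thus on the overlap of the two regimes the constants can be enlarged so that the same pair of inequalities holds in either formulation. Uniformity of the constants across $x$ is inherited from Theorem \ref{Satz:AllgAbschDiagOperatorEW}, whose proof only uses the norm-equivalence constant on $\RR^d$.
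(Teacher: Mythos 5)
Your argument is exactly the paper's: sandwich $\tau_x(\xi)$ using $\|\xi\|\leq\|\tau_x(\xi)^{E(x)}\|$ and $1\leq\|\tau_x(\xi)^{-E(x)}\|\,\|\xi\|$ together with the two cases of Theorem~\ref{Satz:AllgAbschDiagOperatorEW} applied to $\tau_x(\xi)^{\pm 1}$, then invert. The only difference is cosmetic: the paper disposes of the ``or'' in the hypothesis by observing that, since $\|\cdot\|_0=\|\cdot\|$ here, $\tau_x(\xi)\leq 1$ is \emph{equivalent} to $\|\xi\|\leq 1$ (by monotonicity of $r\mapsto\|r^{E(x)}\theta\|$ and $\|1^{E(x)}\theta\|=1$), so no constant enlargement is needed — a fact your own appeal to monotonicity already implies if pushed one step further.
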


\begin{proof}
We only show the first two inequalities, the other to being proved analogously. By the Cauchy-Schwarz inequality an Theorem
\ref{Satz:AllgAbschDiagOperatorEW} we obtain
\[
   \| \xi \| = \| \tau_x(\xi)^{E(x)} l_x (\xi) \| \leq \| \tau_x( \xi)^{E(x)} \| \cdot \| l_x(\xi) \| \leq C \tau_x(\xi)^{\lambda(x)}
\]
for all  $\tau_x(\xi) \leq 1 $ and a constant $C>0$. This yields
\[
 \tau_x(\xi) \geq C_1 \| \xi \|^{1/ \lambda(x)}
\] 
for $\tau_x( \xi) \leq 1$. Here,  $\tau_x( \xi) \leq 1$ is by definition of the norm $\| \cdot \|_0= \| \cdot \|$ (cf. Formula \eqref{Formel:NormVerallPolarkordinaten}) equivalent to $\| \xi \| \leq 1$.
\\
For the upper bound, Theorem \ref{Satz:AllgAbschDiagOperatorEW} yields:
\[
   \| \tau_x(\xi) ^{-E(x)} \| \leq C \tau_x(\xi)^{-\Lambda(x)} 
\] 
with a constant $C>0$. Furthermore, it holds $l_x(\xi)= \tau_x(\xi)^{-E(x)} \xi$ and hence it follows
\[
     1= \| l_x(\xi) \| \leq \| \tau_x( \xi)^{-E(x)} \| \cdot \| \xi \| \leq C \tau_x(\xi)^{-\Lambda(x)} \| \xi \|.
\]
Writing this in terms of $ \tau_x(\xi)$ yields the result. 
\end{proof}

In order to establish the subsequent lemma we need the following. (cf. formula ($1.3$) in \cite{VanLoan}).
\begin{lemma}    \label{Lemma:VanLoanFormel}
For $A,B \in GL(\RR^d)$ and $t \geq 0$ the following identity holds
\begin{equation}
e^{(A+B)t}=e^{At}+ \int_0^t e^{A(t-s)}Be^{(A+B)s} \ ds.
\end{equation} 
\end{lemma}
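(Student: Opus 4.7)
The plan is to prove this by the classical variation-of-parameters trick: I would introduce the auxiliary function
\[
  \phi(s) := e^{A(t-s)} \, e^{(A+B)s}, \quad s \in [0,t],
\]
evaluate $\phi(0)$ and $\phi(t)$ directly, and then recover the identity by integrating $\phi'$ over $[0,t]$ via the fundamental theorem of calculus for matrix-valued functions.

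Concretely, I would first observe that $\phi(0) = e^{At}$ and $\phi(t) = e^{(A+B)t}$, so that $\phi(t) - \phi(0)$ is exactly the left-hand side minus the first term on the right-hand side of the claimed identity. Next, I would differentiate $\phi$ with respect to $s$. Since $A$ commutes with $e^{A(t-s)}$ (both are polynomials in $A$), we have
\[
  \frac{d}{ds} e^{A(t-s)} = -A \, e^{A(t-s)} = -e^{A(t-s)} A,
\]
and of course $\frac{d}{ds} e^{(A+B)s} = (A+B) \, e^{(A+B)s}$. Applying the product rule gives
\[
  \phi'(s) = -e^{A(t-s)} A \, e^{(A+B)s} + e^{A(t-s)} (A+B) \, e^{(A+B)s} = e^{A(t-s)} B \, e^{(A+B)s}.
\]
Integrating this from $0$ to $t$ and using $\phi(t) - \phi(0) = \int_0^t \phi'(s)\,ds$ yields the desired formula.

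The only subtle point is the use of commutativity: while $A$ and $e^{A(t-s)}$ commute (and are the reason the two $A$-terms cancel in $\phi'(s)$), in general $B$ does \emph{not} commute with $e^{A(t-s)}$ or with $e^{(A+B)s}$, which is why the remaining integrand must be written with the operators in the specific order $e^{A(t-s)} B \, e^{(A+B)s}$ and not simplified further. There is no real obstacle here; the calculation is essentially forced once the ansatz $\phi(s) = e^{A(t-s)} e^{(A+B)s}$ is chosen, and the hypothesis $A, B \in GL(\RR^d)$ is not actually needed for the argument, only that $A$ and $B$ are bounded operators on $\RR^d$.
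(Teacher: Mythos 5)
Your proof is correct and is the standard variation-of-parameters argument; the paper itself supplies no proof of this lemma, merely citing it as formula (1.3) in Van Loan's paper. Your side remark that the hypothesis $A,B\in GL(\RR^d)$ is superfluous (only that they are square matrices, i.e.\ bounded operators on $\RR^d$, matters) is also accurate.
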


\begin{lemma}   \label{Lem:AbschaetzungDiffExponenten}
Let $E(x)$ be an admissible exponent. For $\delta >0$ there exists constants $C,\tilde{C}>0$, such that for all $ r \in (0,1) $ and all $x,y \in \RR^d$:
\begin{equation} \label{AbschaetzDiffExp}
\| r^{E(x)}-r^{E(y)} \| \leq C \| E(x)-E(y)\| \cdot r^{a-\delta} \leq \tilde{C} \|x-y\| \cdot r^{a-\delta}.
\end{equation}
\end{lemma}

\begin{proof}
The second inequality directly follows from the Lipschitz property of the exponent. It remains to show the first one. 
We use Lemma \ref{Lemma:VanLoanFormel} with $A:=-E(y), B:=E(y)-E(x)$ and $t:=-\ln(r)> 0$ for $r \in (0,1)$. Hence, 
\begin{align*}
  r^{E(x)}-r^{E(y)} &= e^{(A+B)t}-e^{At} \\
    &= \int_0^t e^{A(t-s)}Be^{(A+B)s} \ ds \\
    &= \int_0^{-\ln (r)} e^{(-E(y))(- \ln (r) -s )}(E(y)-E(x)) e^{-(E(x))s}  ds \\
    &= \int_0^{-\ln (r)} e^{E(y)(\ln (r) +s)}(E(y)-E(x))e^{-E(x)s} \ ds.
\end{align*}
This yields that
\[
   \| r^{E(y)} - r^{E(x)} \| \leq \| E(y)-E(x) \| \int_0^{- \ln (r)} \|e^{E(y)(\ln (r) +s) } \| \cdot \| e^{-E(x)s}\| \ ds.
\]   
By a change of variables $s=-\ln (v)$, this is equal to
\[
 \int_r^1 \| e^{E(y)(\ln (r) - \ln (v))} \| \cdot \| e^{E(x)\ln (v)}\| \frac{1}{v} \ dv = \int_r^1 \left\Vert \left(\frac{r}{v}\right)^{E(y)}\right\Vert \cdot \| v^{E(x)}\| \frac{1}{v} \ dv.
\]
Since $v \leq 1$ and $r/v \leq 1$, the integral can be estimated by using Proposition \ref{Prop:AbschMEExponent} with constants $C_1,C_2>0$.
\begin{align*}
C_1C_2\int_r^1 \left(\frac{r}{v}\right)^{a} v^{a} \frac{1}{v} \ dv &= C_1C_2 r^{a} \int_r^1 \frac{1}{v} \ dv \\
&=C_1C_2 r^{a} (- \ln (r)) \\
& \leq C  r^{a-\delta}
\end{align*}
Here,  $r^{\delta}(-\ln (r))$ is bounded, since $\delta >0$ and $r \in (0,1)$. 
\end{proof}

\begin{remark}
In Sections 2 and 3 we tacitly assume that the constant $\delta>0$ in Lemma \ref{Lem:AbschaetzungDiffExponenten} is chosen in such a way that $ a- \delta > \frac{1}{2}$ is satisfied.
\end{remark}

\end{document}